\DeclareMathAlphabet{\mathbfsl}{OT1}{ppl}{b}{it} 
\newcommand{\deff}{\mbox{$\stackrel{\rm def}{=}$}}
\newcommand{\Span}[1]{{\left\langle {#1} \right\rangle}}
 \DeclareRobustCommand{\nsbinom}{\genfrac[]\z@{}}
 \newcommand{\sbinomq}[2]{\nsbinom{{#1}}{{#2}}_{q}}
\newcommand{\field}[1]{\mathbb{#1}}
\newcommand{\F}{\field{F}}
\newcommand{\dS}{\field{S}}
\newcommand{\cA}{{\cal A}}
\newcommand{\cB}{{\cal B}}
\newcommand{\cC}{{\cal C}}
\newcommand{\cD}{{\cal D}}
\newcommand{\cP}{{\cal P}}
\newcommand{\cY}{{\cal Y}}
\newcommand{\cZ}{{\cal Z}}
\newcommand{\cM}{{\cal M}}
\newcommand{\cG}{{\cal G}}
\newcommand{\linadd}{\kern1pt\mbox{\small$\boxplus$}\kern1pt}
\newtheorem{theorem}{Theorem}
\newtheorem{lemma}{Lemma}
\newtheorem{cor}{Corollary}
\begin{document}

\bibliographystyle{plain}

\title{
\begin{center}
Residual $q$-Fano Planes and Related Structures
\end{center}
}
\author{
{\sc Tuvi Etzion}\thanks{Department of Computer Science,
Technion, Haifa 32000, Israel, e-mail: {\tt
etzion@cs.technion.ac.il}.} \and {\sc Niv
Hooker}\thanks{Department of Computer Science, Technion, Haifa
32000, Israel, e-mail: {\tt snivh@cs.technion.ac.il}.}}

\maketitle

\begin{abstract}
One of the most intriguing problems, in $q$-analogs of designs, is the existence question of
an infinite family of $q$-analog of Steiner systems, known also as $q$-Steiner systems, (spreads not included) in general,
and the existence question for the $q$-analog of the Fano plane, known also as the $q$-Fano plane, in particular. These
questions are in the front line of open problems in block design.
There was a common belief and a conjecture that such structures do not exist.
Only recently, $q$-Steiner systems were found for one set of parameters.
In this paper, a definition for the $q$-analog of the residual design
is presented. This new definition is different from previous known definition,
but its properties reflect better the $q$-analog properties.
The existence of a design with the parameters
of the residual $q$-Steiner system in general and the
residual $q$-Fano plane in particular are examined.
We construct different
residual $q$-Fano planes for all $q$, where $q$ is a prime power. The constructed structure
is just one step from a construction of a $q$-Fano plane.
\end{abstract}

\vspace{0.5cm}

\noindent {\bf Keywords:} $q$-analog, spreads, $q$-Fano plane, $q$-Steiner systems, derived design, residual design.

\vspace{0.5cm}



\newpage
\section{Introduction}

Let $\F_q$ be the finite field with $q$ elements
and let $\F_q^n$ be the set of all vectors
of length $n$ over~$\F_q$. $\F_q^n$ is a vector
space with dimension $n$ over $\F_q$. For a given integer $k$,
$0 \leq k \leq n$, let $\cG_q(n,k)$ denote the set of all
$k$-dimensional subspaces ($k$-\emph{subspaces} in short) of $\F_q^n$. $\cG_q(n,k)$ is often
referred to as a Grassmannian. It is well known that
$$ \begin{small}
| \cG_q (n,k) | = \sbinomq{n}{k}
\deff \frac{(q^n-1)(q^{n-1}-1) \cdots
(q^{n-k+1}-1)}{(q^k-1)(q^{k-1}-1) \cdots (q-1)}
\end{small}
$$
where $\sbinomq{n}{k}$ is the $q$-\emph{binomial coefficient} (known also
as the \emph{Gaussian coefficient}~\cite[pp. 325-332]{vLWi92}).

Let $Q$ be a set with $n$ elements. A \emph{$t$-$(n,k,\lambda)$
design}, is a collection of $k$-subsets
of $V$, called \emph{blocks}, such that each $t$-subset of $Q$ is contained in
exactly $\lambda$ blocks.
A $t$-$(n,k,\lambda)$ design with $t = \lambda = 1$ is trivial: it is simply
a~partition of $Q$ into $k$-subsets, which exists if and only if $k$ divides $n$.
A $t$-$(n,k,1)$ design with $t \:{\geq}\, 2$
is known as a \emph{Steiner system}, and usually
denoted $S(t,k,n)$. Steiner systems are among the most
beautiful and well-studied structures in combinatorics. Their history
goes back to the work of Pl\"ucker~\cite{P1835},
Kirkman~\cite{K1847}, Cayley~\cite{C1850}, and
Steiner~\cite{S1853} in the first half of the 19-th century. 
Today, the~significance of Steiner systems extends well beyond
combinatorics --- they have found applications in many areas,
including group theory, finite geometry, cryptography, and
coding theory~\cite{BJL,CoDi07,EtVa11}.
For example, a finite projective plane of order $q$ can be
characterized as a~Steiner system $S(2,q{+}1,q^2{+}\,q\,{+}1)$,
with lines as blocks. As another example,
the Mathieu groups (which played an~important
role in the classification of finite simple groups)
are most naturally understood as automorphism groups
of certain Steiner systems.

A long-standing problem in design theory asks whether nontrivial
(meaning $t < k < n$) Steiner systems with $t > 5$ exist. Keevash
recently announced~a~resolution of this problem: his breakthrough
paper~\cite{Kee14} moreover shows that Steiner~systems $S(t,k,n)$ exist
for all $t < k$ and all sufficiently large integers $n$ that satisfy
the necessary divisibility conditions. More recently
another (simpler) proof was provided by Glock, K\"{u}hn, Lo, and Osthus~\cite{GKLO16}.

The classical theory of \emph{q-analogs} of mathematical objects
and functions has its beginnings in the work of Euler~\cite{Euler,KvA09}.
In 1957, Tits~\cite{Tits57} further suggested that combinatorics of sets could
be regarded as the limiting case $q \to 1$ of combinatorics of vector spaces
over the finite field $\F_q$. Indeed, there is a strong analogy between subsets
of a set and subspaces of a vector space, expounded by numerous
authors---see~\cite{Cohn,GR,Wang} and references therein.
It is therefore natural~to~ask which combinatorial structures
can be generalized from sets (the $q \to 1$ case) to vector spaces
over $\F_q$. For $t$-designs and Steiner systems, this question
was first studied by Cameron~\cite{Cam74,Cam74a} and
Delsarte~\cite{Del76} in the early 1970s.
Specifically,~let $\F_q^n$ be a vector space of dimension $n$ over the
finite field $\F_q$.
Then a \emph{$t$-$(n,k,\lambda)$~design over $\F_q$} is defined
in~\cite{Cam74,Cam74a,Del76} as
a collection of $k$-subspaces of $\F_q^n$,
called \emph{blocks}, such that each $t$-subspace of $\F_q^n$
is contained in exactly $\lambda$ blocks. Such $t$-designs over $\F_q$
are the $q$-analogs of conventional combinatorial designs. By analogy
with the $q \to 1$ case, a~$t$-$(n,k,1)$ design over $\F_q$~is said to
be a \emph{$q$-Steiner system}, and denoted $\dS_q(t,k,n)$.

\vspace{1.00ex}
\noindent{\bf Remark.}
We observe that $q$-analogs of designs and Steiner systems are
not~only of interest in their own right, but also arise naturally
in other areas, such as network coding~\cite{EtSt16}.
The appropriate code in random network coding is
a~{collection of subspaces} of $\F_q^n$ that are well-separated
according to a~metric defined on the Grassmannian.
Consequently, a $q$-Steiner~system $\dS_q(t,k,n)$ can be thought
of as an \emph{optimal code} for error-correction in networks.
For more details on this, see~\cite{EtVa11,KoKs08}.

\vspace{1.00ex}
Following the work of Cameron~\cite{Cam74,Cam74a} and
Delsarte~\cite{Del76}, the first \mbox{examples}
of nontrivial $t$-designs over $\F_q$
were found by Thomas~\cite{Tho87} in 1987. Today,~\mbox{owing}
to the efforts of many
authors~\cite{BKL,FLV14,KiLa15,MMY95,RaSi89,Suz90,Suz90a,Suz92,Tho96},
numerous such examples are known.

However, the situation is very different for $q$-Steiner systems.
They are known to exist in the trivial cases $t = k$ or $k=n$,
and in the case where $t = 1$ and $k$~divides $n$.
In the latter case, $q$-Steiner systems coincide with the classical
notion of \emph{spreads} in projective geometry~\cite[Chapter\,24]{vLWi92}.
Some 40 years ago, Beutelspacher~\cite{Beu78} asked
whether nontrivial $q$-Steiner
systems with $t \geq 2$ exist, and this question has tantalized mathematicians
ever since. The problem has been studied by numerous
authors~\cite{AAK01,EtVa11a,Met99,ScEt02,Tho87,Tho96}, without much
progress toward constructing such $q$-Steiner systems.
In particular, Thomas~\cite{Tho96} showed in 1996 that
certain kinds of $\dS_2(2,3,7)$ $q$-Steiner systems (the smallest possible
example) cannot exist. 
Three years later, Metsch~\cite{Met99} conjectured that~nontrivial
$q$-Steiner systems with $t \geq 2$ do not exist in general.
In contrast to this conjecture, a $q$-Steiner system $\dS_2 (2,3,13)$
was constructed recently~\cite{BEOVW}. In fact, once one such system was found, other nonisomorphic
systems with the same parameters were found.

Similarly, to Steiner systems, simple necessary divisibility conditions
for the existence of a given $q$-Steiner system were developed~\cite{ScEt02,Suz90}.

\begin{theorem}
\label{thm:derived}
If a $q$-Steiner system $\dS_q (t,k,n)$ exists, then
for each $i$, $1 \leq i \leq t-1$, a $q$-Steiner system
$\dS_q(t-i,k-i,n-i)$ exists.
\end{theorem}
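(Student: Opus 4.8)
The plan is to construct a $\dS_q(t-i,k-i,n-i)$ explicitly as a \emph{derived design} of the given $\dS_q(t,k,n)$, mirroring the classical set-theoretic construction but carried out in the lattice of subspaces. Let $\cB$ denote the block set of the given $q$-Steiner system, so that $\cB$ is a family of $k$-subspaces of $\F_q^n$ in which every $t$-subspace lies in exactly one member. Fix once and for all an $i$-subspace $W \subseteq \F_q^n$; since $1 \leq i \leq t-1$ we have $\dim W = i < t \leq k \leq n$, so $W$ is a proper nonzero subspace and all the dimensions below stay in the admissible range. The quotient space $\F_q^n/W$ is a vector space of dimension $n-i$ over $\F_q$, hence $\F_q^n/W \cong \F_q^{n-i}$, and this is the ambient space in which the derived design will live.

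The key tool is the correspondence theorem for quotient spaces: the canonical projection $\pi \colon \F_q^n \to \F_q^n/W$ induces an inclusion-preserving bijection between the subspaces of $\F_q^n$ that contain $W$ and the subspaces of $\F_q^n/W$, and this bijection lowers dimension by exactly $i$. First I would use this to define the derived block set $\cB_W \deff \{\, \pi(B) : B \in \cB,\ W \subseteq B \,\}$; each such $\pi(B)$ is a $(k-i)$-subspace of $\F_q^n/W$, as required for the block size.

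Next I would verify the defining Steiner property of $\cB_W$. Let $\bar T$ be an arbitrary $(t-i)$-subspace of $\F_q^n/W$. Under the correspondence it is the image $\pi(T)$ of a unique $t$-subspace $T \subseteq \F_q^n$ with $W \subseteq T$. By the Steiner property of $\cB$ there is exactly one block $B \in \cB$ with $T \subseteq B$; and since $W \subseteq T \subseteq B$, this $B$ automatically contains $W$, so $\pi(B) \in \cB_W$ and $\bar T = \pi(T) \subseteq \pi(B)$. For uniqueness, if $\bar T \subseteq \pi(B')$ for some $B' \in \cB$ with $W \subseteq B'$, then transporting back through the correspondence gives $T \subseteq B'$, whence $B' = B$ by the Steiner property of $\cB$. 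Thus every $(t-i)$-subspace of $\F_q^n/W$ lies in exactly one member of $\cB_W$, so $\cB_W$ is a $\dS_q(t-i,k-i,n-i)$.

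I do not expect a serious obstacle here: the argument is essentially the transport of the Steiner property through an isomorphism of subspace lattices, and the only point requiring care is the bookkeeping that guarantees any block containing a $t$-subspace through $W$ must itself contain $W$, so that no blocks are lost in passing to $\cB_W$. An equally clean alternative is to prove only the case $i=1$---fixing a $1$-subspace and passing to $\F_q^n/W \cong \F_q^{n-1}$---and then iterate the construction $i$ times; the hypothesis $i \leq t-1$ is exactly what keeps the parameter $t-i$ at least $1$ throughout, so that the $(t-i)$-subspaces under consideration remain genuine nonzero subspaces.
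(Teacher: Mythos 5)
Your proof is correct. Note that the paper itself gives no proof of this theorem --- it is quoted as a known divisibility/existence result from the cited references (Schwartz--Etzion and Suzuki) --- and your quotient-space argument (fix an $i$-subspace $W$, keep the blocks containing $W$, and transport the Steiner property through the lattice isomorphism between subspaces containing $W$ and subspaces of $\F_q^n/W \cong \F_q^{n-i}$) is precisely the standard derivation of the derived $q$-Steiner system; the one step needing care, that any block containing the $t$-subspace $\pi^{-1}(\bar T) \supseteq W$ automatically contains $W$ and hence survives the restriction, is handled correctly.
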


\begin{cor}
\label{cor:ness}
If a $q$-Steiner system $\dS_q(t,k,n)$ exists, then for all $0 \leq i \leq t-1$,
$$
\frac{\sbinomq{n-i}{t-i}}{\sbinomq{k-i}{t-i}}
$$
must be integers.
\end{cor}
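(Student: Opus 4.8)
The plan is to reduce the statement to a single double-counting identity applied to each derived system supplied by Theorem~\ref{thm:derived}. First I would dispose of the index $i=0$ separately: this case merely asserts that $\sbinomq{n}{t}/\sbinomq{k}{t}$ is an integer, which will follow from the counting argument below applied to the given system $\dS_q(t,k,n)$ itself. For $1 \le i \le t-1$, Theorem~\ref{thm:derived} guarantees that a $q$-Steiner system $\dS_q(t-i,k-i,n-i)$ exists, so it suffices to establish the integrality claim for an \emph{arbitrary} existing $q$-Steiner system $\dS_q(t',k',n')$ and then substitute the parameters $(t',k',n')=(t-i,k-i,n-i)$.

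The core step is a block count. View $\dS_q(t',k',n')$ as a family of $k'$-subspaces (blocks) of $\F_q^{n'}$ in which every $t'$-subspace lies in exactly one block, and let $b$ denote the number of blocks. I would count in two ways the incident pairs $(T,B)$, where $B$ is a block and $T$ is a $t'$-subspace with $T \subseteq B$. Summing over the $t'$-subspaces $T$ and using that each lies in exactly one block gives $\sbinomq{n'}{t'}$ such pairs, since $\sbinomq{n'}{t'}$ is the total number of $t'$-subspaces of $\F_q^{n'}$. Summing instead over the blocks $B$ and using that a $k'$-dimensional space contains exactly $\sbinomq{k'}{t'}$ subspaces of dimension $t'$ gives $b\,\sbinomq{k'}{t'}$. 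Equating the two counts yields $b = \sbinomq{n'}{t'}/\sbinomq{k'}{t'}$.

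Since $b$ counts blocks it is a nonnegative integer, which is precisely the required integrality of the ratio; substituting $(t',k',n')=(t-i,k-i,n-i)$ for all $0\le i\le t-1$ then yields the asserted conditions simultaneously. I do not anticipate a genuine obstacle here. The only points demanding care are the bookkeeping fact that a $k'$-subspace contains exactly $\sbinomq{k'}{t'}$ subspaces of dimension $t'$ (the Grassmannian count applied inside a single block), and the observation that Theorem~\ref{thm:derived} supplies derived systems only for $1\le i\le t-1$, so the endpoint $i=0$ must be handled by applying the count to the given system directly.
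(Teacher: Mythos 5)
Your argument is correct and is the standard derivation: the paper states this corollary without proof (citing the divisibility-condition literature), and deriving it by counting incident pairs $(T,B)$ in each derived system $\dS_q(t-i,k-i,n-i)$ supplied by Theorem~\ref{thm:derived}, together with the original system for $i=0$, is exactly the intended route. No gaps.
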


Deriving new designs from designs in general and $q$-Steiner systems in particular
is an important direction to find new designs and to exclude the possible
existence of other designs. Using $q$-analog of the derived design and the residual designs
it was proved that sometimes the necessary conditions for the existence
of a $q$-Steiner system $\dS_q(t,k,n)$ are not sufficient~\cite{KiLa15}.
The first set of parameters ($t$, $k$, and $n$) for which the existence question of $q$-Steiner systems
is not settled is the parameters for the $q$-analog
of the Fano plane, i.e. the $q$-Steiner systems $\dS_q(2,3,7)$, which
will be called also in this paper the $q$-\emph{Fano plane}. There was a lot of effort to
find whether the $q$-Fano plane, especially for $q=2$, exists or does not exist, e.g.~\cite{BKN15,DePh17,EtVa11a,Tho96}.
All these attempts did not provide any answer to the existence question. It was proved recently
in~\cite{BKN15} that if such system exists for $q=2$, then its automorphism group has a small order.
In~\cite{Etz15} a different approach to consider $q$-Steiner systems was given. This approach is based
on puncturing a possible existing $q$-Steiner systems and considering the parameters of the structure
derived from the punctured systems. Properties of the $q$-Fano plane based on this approach were also discussed.
This approach led to the results in the current paper.

In this paper we present a construction for a design with the same parameters
as the design derived from a $q$-Fano
plane, the residual $q$-Fano plane. The constructed design will be also called
the residual $q$-Fano plane. The construction has many places in which there is flexibility for
many choices which lead to a construction of many such designs.
Our definition for the residual $q$-Steiner system
and the derived $q$-Steiner system result in two structures whose union has the same
size as the related $q$-Steiner system, which is not the case for the definition given in~\cite{KiLa15}
and other possible definitions. This makes the residual $q$-Fano plane obtained by our construction
to be a design which is almost as close as possible to a $q$-Fano plane. This definition
of residual $q$-Steiner system and the construction of the residual $q$-Fano plane
is a new direction for a research to solve the existence question of $q$-Steiner systems in
general and $q$-Fano planes in particular.

The rest of this paper is organized as follows. In Section~\ref{sec:residual}
we present a definition for a residual $q$-Steiner system, explain why this
definition represents the appropriate $q$-analog definition, and compare it to the
other definitions. In Section~\ref{sec:comb_struct} a few combinatorial structures
which are used in the construction are defined and some of their properties are discussed.
In Section~\ref{sec:represent} we will discuss representation of subspaces for our construction.
In Section~\ref{sec:extend} it will be explained how to
extend and expand the subspaces in $\F_q^4$ to subspaces in $\F_q^6$. The
construction of the residual $q$-Fano plane is presented in Section~\ref{sec:construction}, where its correctness is also proved.
Conclusions and future research are discussed in Section~\ref{sec:conclude}. In particular we indicate
on the points in the construction in which there is flexibility to construct many different residual
$q$-Fano planes.

\section{Derived and Residual Designs}
\label{sec:residual}

For a design $S$ on a set $Q$, and an element $x \in Q$, the derived design is defined by
$$
\{ B \setminus \{ x \} ~:~ B \in S,~ x \in B \}~,
$$
and the residual design is defined by
$$
\{ B ~:~ B \in S,~ x \notin B \}~.
$$

In~\cite{KiLa15} there is a simple definition for a $q$-analog of the derived design
and the residual design. For this definition we choose an element $u \in \F_q^n$
and an $(n-1)$-subspace $V \subset \F_q^n$ such that $\Span{ \{ u \} \cup V } = \F_q^n$,
where $\Span{X}$ denote the linear span of $X$.
The derived design of a design $\dS$ over $\F_q$, was defined as

\begin{equation}
\label{eq:derKiLa}
\{ B \cap V ~:~ B \in \dS, ~ u \in B \}~,
\end{equation}
and the residual design of $\dS$, was defined as

\begin{equation}
\label{eq:resKiLa}
\{ B ~:~ B \in \dS,~ B \subset V \}~.
\end{equation}

By these definitions, the derived design and residual design of a $q$-Steiner system are both
designs over $\F_q$. This is on the positive side. On the negative side, the size of their
union is significantly smaller than the size of the design from which they were derived.

We present now a different definition for the $q$-analog of a derived design and a residual design which solves this
problem in the definition of~\cite{KiLa15}. Let $u$ be the unit vector with the unique \emph{one} in the
last coordinate, and $V \deff \{ (x,0) ~:~ x \in \F_q^{n-1} \}$. Also, for a subspace $B \subset \F_q^n$,
let $\cZ(B)$ be the subspace obtained from $B$, by removing the last coordinate
of all the vectors in $B$. The derived and residual designs are defined by

\begin{equation}
\label{eq:derived}
\text{der} (\dS) \deff \{ \cZ (B \cap V) ~:~ B \in \dS, ~ u \in B \}~.
\end{equation}

\begin{equation}
\label{eq:residual}
\text{res} (\dS) \deff \{ \cZ(B) ~:~ B \in \dS, ~ u \notin B \}~.
\end{equation}

The two definitions of the derived design are equivalent, but there is a significant
difference in the two definitions of the residual design. For the new definitions given in~(\ref{eq:derived})
and~(\ref{eq:residual}), we have that $|\dS| = |\text{der} (\dS)| + |\text{res} (\dS)|$, a property that
does not hold for the definitions given in~(\ref{eq:derKiLa}) and~(\ref{eq:resKiLa}). The fact that the union
of the two derived designs has size as the original design is one argument that these definitions
serve better as the $q$-analog of the derived design and the residual design. We continue to examine more properties, but the
examination will relate only to Steiner systems $S(t,k,n)$ or only Steiner triple system $S(2,3,n)$, which
are the topic of this paper (but, these properties are also true for other parameters). Another argument is that the uncovered pairs
in a residual Steiner triple system $S(2,3,n)$ form a perfect matching (known also as a 1-factor or $S(1,2,n-1)$)
(see the work of Spencer~\cite{Spe68} for the uncovered pairs of triple systems).
The $q$-analog is the uncovered 2-subspaces in a residual design of a $q$-Steimer system
$\dS_q (2,3,n)$. These uncovered pairs form a $q$-Steiner system $\dS_q(1,2,n-1)$ (known also as a 1-spread). Indeed,
the uncovered pairs in the residual $q$-Steiner system defined in (\ref{eq:residual}) are exactly
the $q$-analog of the uncovered pairs of the residual Steiner system.
This property does not exist in the definition given in (\ref{eq:resKiLa}).
A third argument is a consequence of the next theorem.

The union of the derived $q$-Steiner system and the residual $q$-Steiner system
was called in~\cite{Etz15}, the punctured (or 1-punctured)
$q$-Steiner system. But, no such system was constructed in~\cite{Etz15}.
In the exposition given in~\cite{Etz15} it was proved that

\begin{theorem}
\label{thm:DerRes}
If $\dS$ is a $q$-Steiner system $\dS_q(t,k,n)$, then the derived system
contains exactly $\frac{\sbinomq{n-1}{t-1}}{\sbinomq{k-1}{t-1}}$ distinct $(k-1)$-subspaces
which form a $q$-Steiner system $\dS_q(t-1,k-1,n-1)$. Each $t$-subspace of $\F_q^{n-1}$
which is contained in a $(k-1)$-subspace of $\text{der} (\dS)$ is not contained
in any of the $k$-subspaces of $\text{res}(\dS)$.
Each $t$-subspace of $\F_q^{n-1}$ which is not contained in a $(k-1)$-subspace of $\text{der}(\dS)$,
appears exactly $q^t$ times in the $k$-subspaces of $\text{res}(\dS)$.
\end{theorem}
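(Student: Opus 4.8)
The plan is to analyze everything through the coordinate projection. Write $\pi:\F_q^n\to\F_q^{n-1}$ for the linear map deleting the last coordinate, so that $\cZ(B)=\pi(B)$ for every subspace $B$, $\ker\pi=\Span{u}$, and $\pi|_V$ is an isomorphism onto $\F_q^{n-1}$. The dimension bookkeeping I would record first is: if $u\in B$ then $\dim\pi(B)=\dim B-1$, while if $u\notin B$ then $\pi|_B$ is injective and $\dim\pi(B)=\dim B$. In particular every block $B\in\dS$ with $u\notin B$ maps to a $k$-subspace $\cZ(B)$, and for $u\in B$ the space $B\cap V$ is a hyperplane of $B$ mapping to a $(k-1)$-subspace.

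The engine of the whole proof is a single counting lemma about preimages. For a $t$-subspace $T\subseteq\F_q^{n-1}$ set $\widehat{T}\deff\pi^{-1}(T)=\{(w,c):w\in T,\ c\in\F_q\}$, a $(t+1)$-subspace of $\F_q^n$ containing $u$. I claim the $t$-subspaces $S\subseteq\F_q^n$ with $\pi(S)=T$ are exactly the hyperplanes of $\widehat{T}$ that avoid $u$, and that there are precisely $q^t$ of them: any such $S$ lies in $\widehat T$ and must satisfy $u\notin S$ (otherwise $\dim\pi(S)<t$), while conversely any hyperplane of $\widehat T$ avoiding $u$ projects isomorphically onto $T$; the count $\sbinomq{t+1}{1}-\sbinomq{t}{1}=q^t$ finishes it. The flat lift $\tilde T=\{(w,0):w\in T\}\subseteq V$ is one of these $q^t$ subspaces. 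The decisive observation is the dichotomy: whether or not $\widehat T$ is contained in some block of $\dS$ governs entirely how $T$ is covered by $\text{der}(\dS)$ and $\text{res}(\dS)$, because $u\in\widehat T$ forces any block containing $\widehat T$ to contain $u$.

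With this in hand each assertion falls out from the Steiner property that each $t$-subspace of $\F_q^n$ lies in a unique block. For the first statement I would run the same argument one dimension down: given a $(t-1)$-subspace $W\subseteq\F_q^{n-1}$, its preimage $\pi^{-1}(W)$ is a $t$-subspace lying in a unique block $B_0$, which necessarily contains $u$; then $\cZ(B_0\cap V)$ is the unique block of $\text{der}(\dS)$ containing $W$, so $\text{der}(\dS)$ is a $q$-Steiner system $\dS_q(t-1,k-1,n-1)$ and hence has the stated number $\sbinomq{n-1}{t-1}/\sbinomq{k-1}{t-1}$ of blocks. For the second statement, if $T$ lies in a $(k-1)$-subspace of $\text{der}(\dS)$ then $\widehat T$ sits inside the corresponding block $B_0$, and any residual block $\cZ(B)$ with $u\notin B$ covering $T$ would contain the $t$-subspace $S=(\pi|_B)^{-1}(T)\subseteq\widehat T$; uniqueness of the block through $S$ forces $B=B_0$, contradicting $u\in B_0$ and $u\notin B$. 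For the third statement, when $\widehat T$ lies in no block, each of the $q^t$ preimages $S_i$ lies in a block $B_i$ that cannot contain $u$ (else $B_i\supseteq S_i+\Span{u}=\widehat T$), so $\cZ(B_i)\in\text{res}(\dS)$ covers $T$; the blocks $B_i$ are distinct because $\pi|_{B_i}$ is injective and so $B_i$ determines $S_i$, and conversely every residual block covering $T$ arises this way, giving exactly $q^t$ appearances.

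The routine parts are the dimension counts and the Gaussian-coefficient identity $\sbinomq{t+1}{1}-\sbinomq{t}{1}=q^t$. The step needing the most care, and which I regard as the crux, is the clean separation into the two cases via $\widehat T$ together with the injectivity of $\pi$ on blocks avoiding $u$: this injectivity is exactly what makes the map $S_i\mapsto B_i$ a bijection in the third statement and what prevents a $t$-subspace from being covered simultaneously by the derived and the residual designs in the second.
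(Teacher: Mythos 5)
Your argument is correct and essentially complete. The paper itself does not prove Theorem~\ref{thm:DerRes}; it only cites~\cite{Etz15} for it, so there is no in-paper proof to compare against. Your route---working through the projection $\pi$ with $\ker\pi=\Span{u}$, the lemma that a $t$-subspace $T$ of $\F_q^{n-1}$ has exactly $\sbinomq{t+1}{1}-\sbinomq{t}{1}=q^t$ preimage $t$-subspaces (the hyperplanes of $\widehat T=\pi^{-1}(T)$ avoiding $u$), and the dichotomy on whether $\widehat T$ lies in a block---is the natural one, and each of the three assertions does follow from the unique-block property exactly as you describe: the lift $\pi^{-1}(W)$ of a $(t-1)$-subspace forces the derived system to be an $\dS_q(t-1,k-1,n-1)$; containment of $\widehat T$ in the (necessarily $u$-containing) block $B_0$ excludes any residual block through $T$; and when $\widehat T$ is in no block, the injectivity of $\pi$ on blocks avoiding $u$ makes $S_i\mapsto B_i$ a bijection onto the residual blocks covering $T$. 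The only points left tacit, both one-line checks, are that distinct blocks through $u$ yield distinct derived subspaces $\cZ(B\cap V)$ (since $B=(B\cap V)+\Span{u}$), and the uniqueness half of the derived-Steiner claim (any derived block containing $W$ must contain the lift $\pi^{-1}(W)=\widetilde W+\Span{u}$, hence come from $B_0$); neither affects the validity of the argument.
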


We will now define any two sets of subspaces which satisfy the properties
given in Theorem~\ref{thm:DerRes} as the derived design and the residual design
for a $q$-Steiner system $\dS_q(t,k,n)$ (but do not depend on the existence
of a $q$-Steiner system $\dS_q(t,k,n)$). For a $q$-Steiner system $\dS_q (t,k,n)$
these definitions are given as follows:

\begin{itemize}
\item A \emph{derived $q$-Steiner system} for a $q$-Steiner system $\dS_q (t,k,n)$ is
a $q$-Steiner system $\dS_q (t-1,k-1,n-1)$.

\item Let $\text{der} (\dS)$ be a $q$-Steiner system $\dS_q(t-1,k-1,n-1)$. The \emph{residual $q$-Steiner system}, $\text{res}(\dS)$,
for a $q$-Steiner system $\dS_q (t,k,n)$ (which might not exists), $\dS$, is a set of distinct $k$-subspaces from $\F_q^{n-1}$
such that each $t$-subspace of $\F_q^{n-1}$ which is not contained in $\text{der} (\dS)$,
is contained in exactly $q^t$ $k$-subspaces of $\text{res}(\dS)$.
\end{itemize}

It should be noted that when $q \to 1$, i.e. for a Steiner system based on an $n$-set, each\linebreak $t$-subset
of the $(n-1)$-set which is not contained in the derived design, is contained in exactly one
$k$-subset of the derived design. This is another indication that our definition for the $q$-analog of the
residual design reflects the best transformation from subsets to subspaces.

It is interesting to know if there exists a system with the same properties of the residual design
in which each $t$-subspace which is not contained in the derived design, is contained in exactly $\lambda$
subspaces of the residual design, where $\lambda < q^t$.
It is not difficult to prove that this is not possible if $\lambda$ is not divisible by $q$
(the proof is left for the interested reader), but it is intriguing to know if $\lambda$ divisible by $q$ is possible.

\section{Combinatorial Structures for the Construction}
\label{sec:comb_struct}

The construction of the residual $q$-Fano plane given in the Section~\ref{sec:construction}
will make use of a few combinatorial structures which are defined, described, and discussed in this section.

The first object is a \emph{1-spread} (spread in short) in $\F_q^n$, where $n$ is
even. A spread $S$ in $\F_q^n$ is a set of 2-subspaces of $\F_q^n$, such that each
nonzero vector of $\F_q^n$ is contained in exactly one 2-subspace of $S$.
It is well known that such a spread exists whenever $n$ is even.

A \emph{1-parallelism} (parallelism in short) in $\F_q^n$ is a partition of the 2-subspaces of $\F_q^n$
into pairwise disjoint spreads. The number of 2-subspaces in such a spread
is $\frac{q^n-1}{q^2-1}$. It was proved by Beutelspacher~\cite{Beu74} that such a parallelism
exists whenever $n$ is a power of 2.

We will be interested in a parallelism in $\F_q^4$, i.e. a partition of the $(q^2+q+1)(q^2+1)$
2-subspaces of $\F_q^4$ into $q^2 +q+1$ disjoint spreads.

We further partition, for our construction of a residual $q$-Fano plane,
the $q^2+q+1$ pairwise disjoint spreads of any given parallelism into three sets $\cA$, $\cB$, and $\cC$.
The set $\cA$ contains one spread. The set $\cB$ contains $q$ spreads, and the set $\cC$ contains
$q^2$ spreads. Any partition of the $q^2+q+1$ spreads is appropriate for this purpose. Such a partition
for $\F_2^4$ is given in Table~\ref{tab:partition}.

In the construction, we have another set $\cD$ which contains all the $q^3 +q^2 +q+1$ distinct
3-subspaces of $\F_q^4$. An example for a basis of the fifteen 3-subspaces of $\F_2^4$ is given
in Table~\ref{tab:3_xubspaces}.

\begin{table}[h]
\centering \caption{Partition of the 2-subspaces of $\F_2^4$ into the sets $\cA$. $\cB$, and $\cC$}
\label{tab:partition}
\begin{small}
\begin{tabular}{|c||c|c|c|c|c||c|c|c|c|c||c|c|c|c|c||c|c|c|c|c|}
\hline
$\cA$ & \hspace{-0.4cm} $\begin{array}{c}
000 \\
011 \\
011 \\
101
\end{array}$ \hspace{-0.4cm} & \hspace{-0.4cm} $\begin{array}{c}
011 \\
011 \\
101 \\
000
\end{array}$ \hspace{-0.4cm} & \hspace{-0.4cm} $\begin{array}{c}
011 \\
101 \\
011 \\
011
\end{array}$ \hspace{-0.4cm} & \hspace{-0.4cm} $\begin{array}{c}
011 \\
101 \\
000 \\
101
\end{array}$ \hspace{-0.4cm} & \hspace{-0.4cm} $\begin{array}{c}
011 \\
000 \\
101 \\
110
\end{array}$ \hspace{-0.4cm} & & & & & &&&&& &&&&&\\
\hline
$ \cB$ & \hspace{-0.4cm} $\begin{array}{c}
000 \\
011 \\
101 \\
000
\end{array}$ \hspace{-0.4cm} & \hspace{-0.4cm} $\begin{array}{c}
011 \\
000 \\
101 \\
101
\end{array}$ \hspace{-0.4cm} & \hspace{-0.4cm} $\begin{array}{c}
011 \\
101 \\
011 \\
101
\end{array}$ \hspace{-0.4cm} & \hspace{-0.4cm} $\begin{array}{c}
011 \\
101 \\
101 \\
110
\end{array}$ \hspace{-0.4cm} & \hspace{-0.4cm} $\begin{array}{c}
011 \\
011 \\
000 \\
101
\end{array}$ \hspace{-0.4cm} &
\hspace{-0.4cm} $\begin{array}{c}
011 \\
101 \\
000 \\
000
\end{array}$ \hspace{-0.4cm} & \hspace{-0.4cm} $\begin{array}{c}
000 \\
011 \\
101 \\
110
\end{array}$ \hspace{-0.4cm} & \hspace{-0.4cm} $\begin{array}{c}
011 \\
101 \\
110 \\
101
\end{array}$ \hspace{-0.4cm} & \hspace{-0.4cm} $\begin{array}{c}
011 \\
011 \\
011 \\
101
\end{array}$ \hspace{-0.4cm} & \hspace{-0.4cm} $\begin{array}{c}
011 \\
000 \\
101 \\
011
\end{array}$ \hspace{-0.4cm}
&&&&& &&&&&\\
\hline
$ \cC$ & \hspace{-0.4cm} $\begin{array}{c}
011 \\
101 \\
110 \\
000
\end{array}$ \hspace{-0.4cm} & \hspace{-0.4cm} $\begin{array}{c}
011 \\
101 \\
011 \\
110
\end{array}$ \hspace{-0.4cm} & \hspace{-0.4cm} $\begin{array}{c}
011 \\
011 \\
101 \\
011
\end{array}$ \hspace{-0.4cm} & \hspace{-0.4cm} $\begin{array}{c}
011 \\
000 \\
000 \\
101
\end{array}$ \hspace{-0.4cm} & \hspace{-0.4cm} $\begin{array}{c}
000 \\
011 \\
101 \\
101
\end{array}$ \hspace{-0.4cm} &
\hspace{-0.4cm} $\begin{array}{c}
011 \\
101 \\
101 \\
000
\end{array}$ \hspace{-0.4cm} & \hspace{-0.4cm} $\begin{array}{c}
011 \\
011 \\
101 \\
101
\end{array}$ \hspace{-0.4cm} & \hspace{-0.4cm} $\begin{array}{c}
011 \\
000 \\
011 \\
101
\end{array}$ \hspace{-0.4cm} & \hspace{-0.4cm} $\begin{array}{c}
000 \\
011 \\
101 \\
011
\end{array}$ \hspace{-0.4cm} & \hspace{-0.4cm} $\begin{array}{c}
011 \\
101 \\
000 \\
011
\end{array}$ \hspace{-0.4cm} &
\hspace{-0.4cm} $\begin{array}{c}
011 \\
101 \\
110 \\
011
\end{array}$ \hspace{-0.4cm} & \hspace{-0.4cm} $\begin{array}{c}
011 \\
101 \\
000 \\
110
\end{array}$ \hspace{-0.4cm} & \hspace{-0.4cm} $\begin{array}{c}
011 \\
101 \\
011 \\
000
\end{array}$ \hspace{-0.4cm} & \hspace{-0.4cm} $\begin{array}{c}
011 \\
101 \\
101 \\
101
\end{array}$ \hspace{-0.4cm} & \hspace{-0.4cm} $\begin{array}{c}
000 \\
000 \\
011 \\
101
\end{array}$ \hspace{-0.4cm} &
\hspace{-0.4cm} $\begin{array}{c}
011 \\
011 \\
101 \\
110
\end{array}$ \hspace{-0.4cm} & \hspace{-0.4cm} $\begin{array}{c}
011 \\
101 \\
101 \\
011
\end{array}$ \hspace{-0.4cm} & \hspace{-0.4cm} $\begin{array}{c}
000 \\
011 \\
000 \\
101
\end{array}$ \hspace{-0.4cm} & \hspace{-0.4cm} $\begin{array}{c}
011 \\
000 \\
101 \\
000
\end{array}$ \hspace{-0.4cm} & \hspace{-0.4cm} $\begin{array}{c}
011 \\
101 \\
110 \\
110
\end{array}$ \hspace{-0.4cm} \\
\hline
\end{tabular}
\end{small}
\end{table}

\begin{table}[h]
\centering \caption{A basis for each one of the fifteen 3-subspaces of the set $\cD$ for $\F_2^4$}
\label{tab:3_xubspaces}
\begin{small}
\begin{tabular}{|c|c|c|c|c|c|c|c|c|c|c|c|c|c|c|}
  \hline
   $\cY_1$&$\cY_2$&$\cY_3$&$\cY_4$&$\cY_5$&$\cY_6$&$\cY_7$&$\cY_8$&$\cY_9$&$\cY_{10}$&$\cY_{11}$&$\cY_{12}$&$\cY_{13}$&$\cY_{14}$&$\cY_{15}$ \\ \hline
\hspace{-0.4cm} $\begin{array}{c}
000 \\
010 \\
011 \\
100
\end{array}$ \hspace{-0.4cm} &
\hspace{-0.4cm} $\begin{array}{c}
001 \\
010 \\
010 \\
100
\end{array}$ \hspace{-0.4cm} &
\hspace{-0.4cm} $\begin{array}{c}
001 \\
010 \\
011 \\
100
\end{array}$ \hspace{-0.4cm} &
\hspace{-0.4cm} $\begin{array}{c}
010 \\
011 \\
100 \\
000
\end{array}$ \hspace{-0.4cm} &
\hspace{-0.4cm} $\begin{array}{c}
010 \\
010 \\
100 \\
001
\end{array}$ \hspace{-0.4cm} &
\hspace{-0.4cm} $\begin{array}{c}
010 \\
011 \\
100 \\
001
\end{array}$ \hspace{-0.4cm} &
\hspace{-0.4cm} $\begin{array}{c}
010 \\
100 \\
010 \\
011
\end{array}$ \hspace{-0.4cm} &
\hspace{-0.4cm} $\begin{array}{c}
010 \\
100 \\
011 \\
011
\end{array}$ \hspace{-0.4cm} &
\hspace{-0.4cm} $\begin{array}{c}
010 \\
100 \\
011 \\
010
\end{array}$ \hspace{-0.4cm} &
\hspace{-0.4cm} $\begin{array}{c}
010 \\
100 \\
000 \\
101
\end{array}$ \hspace{-0.4cm} &
\hspace{-0.4cm} $\begin{array}{c}
010 \\
100 \\
001 \\
100
\end{array}$ \hspace{-0.4cm} &
\hspace{-0.4cm} $\begin{array}{c}
010 \\
100 \\
001 \\
101
\end{array}$ \hspace{-0.4cm} &
\hspace{-0.4cm} $\begin{array}{c}
010 \\
000 \\
100 \\
111
\end{array}$ \hspace{-0.4cm} &
\hspace{-0.4cm} $\begin{array}{c}
010 \\
001 \\
100 \\
110
\end{array}$ \hspace{-0.4cm} &
\hspace{-0.4cm} $\begin{array}{c}
010 \\
001 \\
100 \\
111
\end{array}$ \hspace{-0.4cm}
\\ \hline
\end{tabular}
\end{small}
\end{table}

Let $\alpha$ be a primitive element in $\F_q$.
The next structure that has to be considered is a set of $q^2$ different matrices of size $2 \times (q+1)$ over $\F_q$.
These matrices must satisfy the following properties:
\begin{enumerate}
\item Let $v_1,v_2,\ldots , v_{q+1}$ be the $q+1$ consecutive columns of such a matrix.
For each $i$, $3 \leq i \leq q+1$, $v_i = \alpha^{i-3} v_1 + v_2$
(a scalar $\beta$ is multiplied by each element of a vector~$v$ in the product $\beta v$,
and the vector addition $v_1 + v_2$ is performed element by element in~$\F_q$.).

\item The set of $q^2$ matrices form a linear subspace of dimension two over $\F_q$.

\item For each $i$, $1 \leq i \leq q+1$, the $q^2$ $i$-th column vectors in the $q^2$ matrices
are all distinct, i.e. they consist of all possible $q^2$ column vectors of length 2.
\end{enumerate}
Since these $q^2$ matrices form a linear subspace, it follows that there union is a linear code.
In the sequel, this code will be called the \emph{extension code}.

\begin{lemma}
For each power of a prime $q$ there exists an extension code.
\end{lemma}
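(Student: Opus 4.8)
The plan is to exploit the fact that each matrix in the code is completely determined by its first two columns $v_1, v_2 \in \F_q^2$: Property~1 writes every later column $v_i$ as the fixed $\F_q$-linear combination $\alpha^{i-3} v_1 + v_2$. Thus sending a matrix to the pair $(v_1, v_2)$ is an $\F_q$-linear identification of the space of all matrices satisfying Property~1 with $\F_q^2 \times \F_q^2$. Under this identification, Property~2 asks us to choose a two-dimensional subspace $W$, and Property~3 asks that for each $i$ the column projection $M \mapsto v_i(M)$ restrict to a bijection $W \to \F_q^2$.

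First I would identify $\F_q^2$ with the field $\F_{q^2}$, regarded as a two-dimensional vector space over $\F_q$, and seek $W$ in the explicit form $\{M_z : z \in \F_{q^2}\}$, where the $i$-th column of $M_z$ is $m_i z$ for fixed elements $m_i \in \F_{q^2}$. Such a family is an $\F_q$-linear image of $\F_{q^2}$; it satisfies Property~3 exactly when every $m_i$ is nonzero, since multiplication by a nonzero element of $\F_{q^2}$ is a bijection of $\F_{q^2}$, and in that case $z \mapsto M_z$ is injective, so $W$ is genuinely two-dimensional and Property~2 holds. Property~1 becomes the recurrence $m_i = \alpha^{i-3} m_1 + m_2$. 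Hence the entire problem reduces to choosing $m_1, m_2 \in \F_{q^2}$ so that $m_1$, $m_2$, and each $\alpha^{i-3} m_1 + m_2$ $(3 \le i \le q+1)$ are nonzero.

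Then I would take $m_1 = 1$ and $m_2 = \theta$ for any $\theta \in \F_{q^2} \setminus \F_q$, which exists because $\F_q$ is a proper subfield of $\F_{q^2}$. Since $\alpha$ is primitive, the scalars $\alpha^{i-3}$ with $3 \le i \le q+1$ range over all of $\F_q^{\times}$, so the multipliers are $1$ together with the elements $c + \theta$ for every $c \in \F_q$. Each $c + \theta$ lies in $\F_{q^2} \setminus \F_q$ and is therefore nonzero, and $m_1 = 1 \neq 0$; so all multipliers are nonzero and the three properties hold simultaneously.

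The only real content lies in the reduction of the first two paragraphs; after that, the obstacle is merely to keep $c m_1 + m_2$ away from $0$ for all $c \in \F_q$ at once, i.e.\ to have $m_2/m_1 \notin \F_q$, which is possible exactly because $\F_{q^2} \setminus \F_q$ is nonempty for every prime power $q$. Equivalently, phrasing Property~3 for the finite columns through the determinant $\det[\,c a_1 + a_2 \mid c b_1 + b_2\,]$, one needs this quadratic in $c$ to have no root in $\F_q$, and irreducible quadratics over $\F_q$ always exist; I expect no calculation beyond these verifications.
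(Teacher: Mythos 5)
Your proof is correct, and it takes a genuinely different route from the paper's. The paper builds the code from two explicit basis matrices $M_1$ and $M_2$, where $M_2$ depends on a parameter $\beta\in\F_q$; it then proves a suitable $\beta$ exists by a counting argument on a $q\times q$ Latin square (each column position $i$ with $3\le i\le q+1$ rules out exactly one value of $\beta$, and there are only $q-1$ such positions, so some $\beta$ survives). You instead identify $\F_q^2$ with $\F_{q^2}$ and produce the entire two-dimensional code at once as $\{M_z : z\in\F_{q^2}\}$ with $i$-th column $m_iz$, reducing all three properties to the single condition that the multipliers $m_1$, $m_2$, and $\alpha^{i-3}m_1+m_2$ be nonzero, which you solve explicitly with $m_1=1$ and $m_2=\theta\in\F_{q^2}\setminus\F_q$ (so every $c+\theta$, $c\in\F_q$, avoids $\F_q$ and hence avoids $0$). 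Your argument is fully constructive with no existence-by-pigeonhole step, and it makes visible the field-extension structure underlying the code (the same mechanism behind the standard constructions of spreads); the paper's argument is more elementary in that it never leaves $\F_q$, and its Latin-square formulation makes the count of admissible choices of $\beta$ explicit, which the authors later exploit when discussing the flexibility of the construction. One small point worth making explicit in your write-up: the reduction of Property~1 to $m_i=\alpha^{i-3}m_1+m_2$ uses that the scalar $\alpha^{i-3}$ lies in $\F_q$, so that scalar multiplication on $\F_q^2$ agrees with multiplication in $\F_{q^2}$ under your identification; this is true and routine, but it is the hinge of the whole reduction.
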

\begin{proof}
We start with two $2 \times (q+1)$ matrices over $\F_q$ which will be the basis of the code.

For the first matrix $M_1$ the first column will be
${\tiny \left( \hspace{-0.1cm} \begin{array}{c} 1 \\ 0 \end{array} \hspace{-0.1cm} \right)}$
and the second column will be
${\tiny \left( \hspace{-0.1cm} \begin{array}{c} 0 \\ 1 \end{array} \hspace{-0.1cm} \right)}$.
The $i$-th column, $3 \leq i \leq q+1$, is $\alpha^{i-3}
{\tiny \left( \hspace{-0.1cm} \begin{array}{c} 1 \\ 0 \end{array} \hspace{-0.1cm} \right)}+
{\tiny \left( \hspace{-0.1cm} \begin{array}{c} 0 \\ 1 \end{array} \hspace{-0.1cm} \right)}=
{\tiny \left( \hspace{-0.1cm} \begin{array}{c} \alpha^{i-3} \\ 1 \end{array} \hspace{-0.1cm} \right)}$.

For the second matrix $M_2$ the first column will be
${\tiny \left( \hspace{-0.1cm} \begin{array}{c} 0 \\ 1 \end{array} \hspace{-0.1cm} \right)}$
and the second column will be
${\tiny \left( \hspace{-0.1cm} \begin{array}{c} 1 \\ \beta \end{array} \hspace{-0.1cm} \right)}$,
where $\beta \in \F_q$. The $i-$th column, $3 \leq i \leq q+1$, is $\alpha^{i-3}
{\tiny \left( \hspace{-0.1cm} \begin{array}{c} 0 \\ 1 \end{array} \hspace{-0.1cm} \right)}+
{\tiny \left( \hspace{-0.1cm} \begin{array}{c} 1 \\ \beta \end{array} \hspace{-0.1cm} \right)}=
{\tiny \left( \hspace{-0.1cm} \begin{array}{c} 1 \\ \alpha^{i-3} + \beta \end{array} \hspace{-0.1cm} \right)}$.
We have to prove that there exists a $\beta \in \F_q$ such that the requirements for
the extension code are satisfied.

For this proof we form a $(q+1) \times (q+1)$ matrix $\cM$ whose first row consists of the columns
of the matrix $M_1$ in their given order. The other rows are indexed by the elements of $\F_q$. The row which are indexed
by $\beta \in \F_q$ has
${\tiny \left( \hspace{-0.1cm} \begin{array}{c} 0 \\ 1 \end{array} \hspace{-0.1cm} \right)}$ in the first entry and
${\tiny \left( \hspace{-0.1cm} \begin{array}{c} 1 \\ \beta \end{array} \hspace{-0.1cm} \right)}$ in the second entry.
The $i$-th entry, $3 \leq i \leq q+1$, will be
${\tiny \left( \hspace{-0.1cm} \begin{array}{c} 1 \\ \alpha^{i-3} + \beta \end{array} \hspace{-0.1cm} \right)}$.
It is easy to verify that the $q \times q$ sub-matrix $\cM'$ of $\cM$ defined by removing the first row
and first column of $\cM$ is a Latin square (each row and each column is a permutation of the
$q$ column vectors ${\tiny \left( \hspace{-0.1cm} \begin{array}{c} 1 \\ \beta \end{array} \hspace{-0.1cm} \right)}$,
$\beta \in \F_q$). For each $i$, $3 \leq i \leq q+1$, the element in the $i$-th entry of the first row
of $\cM$ appears in the linear span of the $i$-th entry of exactly one row of $\cM'$. Since $\cM'$ has $q$ rows, it follows that there
exists at least one row which share no linearly dependent entry with the first row of $\cM$. The $\beta$ of such a row
is the required $\beta$ for $M_2$.

The two matrices $M_1$ and $M_2$ are linearly independent. In fact, for each $i$, $1 \leq i \leq q+1$,
the $i$-th columns of the two matrices are linearly independent. Hence, the linear span of $M_1$ and $M_2$
form a linear subspace of dimension two and for the each $i$, $1 \leq i \leq q+1$,
the $i$-th columns of al matrices in the code are distinct.
\end{proof}

Next, we consider all matrices which are candidates for the extension code.
This set of candidates consists of all the $q^4$ distinct $2 \times (q+1)$ matrices over $\F_q$.
If $v_1,v_2,\ldots , v_{q+1}$ are the $q+1$ consecutive columns of such a matrix,
then for each $i$, $3 \leq i \leq q+1$, $v_i = \alpha^{i-3} v_1 + v_2$.
This set of matrices is clearly a linear subspace which will be called the \emph{extension space}.
Since the entries of the first two column vectors can be chosen arbitrarily, it follows that
there are~$q^4$ matrices in the extension space. Moreover, these~$q^4$ matrices form a linear
subspace of dimension four over~$\F_q$. Since the extension code is a linear subspace of
dimension two of the extension space, it follows that we can partition the~$q^4$ matrices of
the extension space into~$q^2$ sets of size~$q^2$ having the following properties:
\begin{enumerate}
\item The extension code is the first set.

\item Let $v_1,v_2,\ldots , v_{q+1}$ be the $q+1$ consecutive columns of any matrix in any of the codes.
For each $i$, $3 \leq i \leq q+1$, $v_i = \alpha^{i-3} v_1 + v_2$.

\item For each $i$, $1 \leq i \leq q+1$, the $q^2$ $i$-th column vectors in the $q^2$ matrices,
of any of the $q^2$ sets,
are all distinct, i.e. they consist of all possible column vectors of length 2.
\end{enumerate}

An example for an extension space (the extension code and its coset) is given in Table~\ref{tab:extension}.

\begin{table}[h]
\centering \caption{The extension space for $q=2$ ($C$ is the code and $C_i$, $i=1,2,3$, are its cosets)}
\label{tab:extension}
\begin{small}
\begin{tabular}{|c||c|c|c|c||c||c|c|c|c||c||c|c|c|c||c||c|c|c|c|}
\hline
\hspace{-0.2cm} $C$ \hspace{-0.2cm} & \hspace{-0.4cm} $\begin{array}{c}
000 \\
000
\end{array}$ \hspace{-0.4cm} & \hspace{-0.4cm} $\begin{array}{c}
011 \\
110
\end{array}$ \hspace{-0.4cm} & \hspace{-0.4cm} $\begin{array}{c}
101 \\
011
\end{array}$ \hspace{-0.4cm} & \hspace{-0.4cm} $\begin{array}{c}
110 \\
101
\end{array}$ \hspace{-0.4cm} &
\hspace{-0.2cm} $C_1$ \hspace{-0.2cm} & \hspace{-0.4cm} $\begin{array}{c}
000 \\
110
\end{array}$ \hspace{-0.4cm} & \hspace{-0.4cm} $\begin{array}{c}
011 \\
000
\end{array}$ \hspace{-0.4cm} & \hspace{-0.4cm} $\begin{array}{c}
101 \\
101
\end{array}$ \hspace{-0.4cm} & \hspace{-0.4cm} $\begin{array}{c}
110 \\
011
\end{array}$ \hspace{-0.4cm} &
\hspace{-0.2cm} $C_2$ \hspace{-0.2cm} & \hspace{-0.4cm} $\begin{array}{c}
110 \\
000
\end{array}$ \hspace{-0.4cm} & \hspace{-0.4cm} $\begin{array}{c}
101 \\
110
\end{array}$ \hspace{-0.4cm} & \hspace{-0.4cm} $\begin{array}{c}
011 \\
011
\end{array}$ \hspace{-0.4cm} & \hspace{-0.4cm} $\begin{array}{c}
000 \\
101
\end{array}$ \hspace{-0.4cm} &
\hspace{-0.2cm} $C_3$ \hspace{-0.2cm} & \hspace{-0.4cm} $\begin{array}{c}
110 \\
110
\end{array}$ \hspace{-0.4cm} & \hspace{-0.4cm} $\begin{array}{c}
101 \\
000
\end{array}$ \hspace{-0.4cm} & \hspace{-0.4cm} $\begin{array}{c}
011 \\
101
\end{array}$ \hspace{-0.4cm} & \hspace{-0.4cm} $\begin{array}{c}
000 \\
011
\end{array}$ \hspace{-0.4cm} \\
\hline
\end{tabular}
\end{small}
\end{table}

The construction of the residual $q$-Fano plane will start from sets of subspaces
from $\F_q^4$. The subspaces of these sets will be extended in various ways to 3-subspaces
of $\F_q^6$, in a way that all these extensions will result in the the residual $q$-Fano plane.
The extension space will have an important role in these extensions as will be explained
in Sections~\ref{sec:represent} and~\ref{sec:construction}.
The methods in which subspaces are extended is explained in Section~\ref{sec:represent}.

We end this section with a connection between the subspaces of $\cA$ and the subspaces
of the set $\cD$.

\begin{lemma}
\label{lem:q+1}
A 2-subspace $X$ of $\F_q^4$ can be expanded in $q+1$ distinct ways to a 3-subspace of~$\F_q^4$.
\end{lemma}
\begin{proof}
A 2-subspace $X$ has $q+1$ pairwise linearly independent vectors. $\F_q^4$ has $\frac{q^4-1}{q-1}=q^3+q^2+q+1$
pairwise linearly independent vectors. Each one of the $q^3+q^2$ pairwise linearly independent vectors not in $X$
can be used to for a 3-subspace of $\F_q^4$. Each 3-subspace contain $\frac{q^3-1}{q-1}=q^2+q+1$ pairwise
linearly independent vectors, i.e. $q^2$ additional vector to $X$. Each one of them will form the same
3-subspace when appended to $X$. Hence $X$ can be expanded in $\frac{q^3 + q^2}{q^2} = q+1$ distinct ways
to a 3-subspace of~$\F_q^4$.
\end{proof}
\begin{lemma}
\label{lem:representDbyA}
Each 3-subspace of $\F_q^4$ (also of $\cD$) contains a unique 2-subspace of the set $\cA$.
\end{lemma}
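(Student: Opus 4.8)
The plan is to prove existence and uniqueness simultaneously by a double-counting argument that exploits the defining property of $\cA$: since $\cA$ is a single spread taken from the parallelism, its $q^2+1$ members partition the $q^4-1$ nonzero vectors of $\F_q^4$ into classes of size $q^2-1$. This partition of \emph{all} nonzero vectors is the hypothesis that makes the count work, and it is the only place where ``$\cA$ is a spread'' (as opposed to an arbitrary collection of $2$-subspaces) is used.

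Fix a $3$-subspace $Y$ (equivalently, an element of $\cD$). First I would record the local behavior of each spread member against $Y$. For any $2$-subspace $W \in \cA$, the dimension identity $\dim(W \cap Y) = \dim W + \dim Y - \dim(W + Y)$ together with $W + Y \subseteq \F_q^4$ forces $\dim(W \cap Y) \geq 2 + 3 - 4 = 1$, while trivially $\dim(W \cap Y) \leq \dim W = 2$. Hence each $W$ either lies inside $Y$ (the case $\dim(W \cap Y) = 2$, i.e. $W \subseteq Y$) or meets $Y$ in a $1$-subspace (the case $\dim(W \cap Y) = 1$). In particular, no member of $\cA$ is disjoint from $Y$.

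Next I would set up the count. Let $a$ be the number of members $W \in \cA$ with $W \subseteq Y$, and $b$ the number with $\dim(W \cap Y) = 1$. Because $\cA$ partitions all nonzero vectors and every nonzero vector of $Y$ lies in exactly one spread member, the classes of $\cA$ also partition $Y \setminus \{0\}$: the $a$ members contained in $Y$ each contribute all their $q^2-1$ nonzero vectors, while the $b$ members meeting $Y$ in a $1$-subspace each contribute $q-1$ nonzero vectors. This yields the two equations $a + b = q^2 + 1$ and $a(q^2-1) + b(q-1) = q^3 - 1$. Eliminating $b$ reduces the second relation to $a\,q(q-1) = q(q-1)$, whence $a = 1$, which is exactly the assertion.

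I do not expect a genuine obstacle here: once the spread property is invoked the conclusion is forced, and the only care needed is the book-keeping that every nonzero vector of $Y$ is counted exactly once by $\cA$. As a sanity check I would note that the same computation reads more transparently in $PG(3,q)$, where $\cA$ is a line-spread and $Y$ is a plane with $q^2+q+1$ points: each spread line meets the plane in either $q+1$ points (if contained in it) or a single point, so $a(q+1) + (q^2+1-a) = q^2+q+1$ again gives $a = 1$.
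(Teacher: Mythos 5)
Your proof is correct, and it takes a genuinely different route from the paper's. The paper first settles uniqueness by observing that two members of the spread $\cA$ intersect only in $0$, so together they would span a $4$-dimensional space and hence cannot both lie in a $3$-subspace; it then gets existence by a \emph{global} count: each of the $q^2+1$ members of $\cA$ lies in $q+1$ distinct $3$-subspaces (its Lemma~\ref{lem:q+1}), giving $(q^2+1)(q+1)=\frac{q^4-1}{q-1}$ distinct $3$-subspaces meeting $\cA$, which is exactly the total number of $3$-subspaces of $\F_q^4$, so none is missed. You instead fix a single $3$-subspace $Y$ and run a \emph{local} double count: the dimension formula forces every spread member to meet $Y$ in dimension $1$ or $2$, and distributing the $q^3-1$ nonzero vectors of $Y$ over the spread classes gives $a(q^2-1)+(q^2+1-a)(q-1)=q^3-1$, hence $a=1$; your arithmetic checks out, and the projective restatement is a fair sanity check. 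Your version delivers existence and uniqueness in one equation, is independent of Lemma~\ref{lem:q+1} and of the count of all $3$-subspaces of $\F_q^4$, and isolates exactly where the spread hypothesis enters; the paper's version is shorter given that Lemma~\ref{lem:q+1} is already established and reuses the disjointness observation that recurs elsewhere in the construction. Both are sound.
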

\begin{proof}
If $X \in \cA$ and $v \in \F_q^4$ is a vector such that $v \notin \cA$,
then $Y \deff \Span{ X \cup \{ v \} }$ is clearly a 3-subspace of $\F_q^4$.
Since $Y$ is a 3-subspace and all the 2-subspaces of $\cA$ are pairwise disjoint,
it follows that $Y$ cannot contain two 2-subspaces of $\cA$.

There are $q+1$ different 3-subspaces which contain $X$, $q^2+1$ different 2-subspaces
in $\cA$, and hence there are $(q^2+1)(q+1) = \frac{q^4-1}{q-1}$ 3-subspaces which
contain 2-subspaces from $\cA$. The total number of different 3-subspace
of $\F_q^4$ is $\frac{q^4-1}{q-1}$. It implies that
each 3-subspace of $\F_q^6$ contains a unique 2-subspace of the set $\cA$.
\end{proof}

\section{Representation of Subspaces}
\label{sec:represent}

The construction of the derived $q$-Fano plane and the
residual $q$-Fano plane will be presented in Section~\ref{sec:construction}.
The construction will start with subspaces from $\F_q^4$ which will consists of the
unique 0-subspace of $\F_q^4$ and the
subspaces of the sets $\cA$, $\cB$, $\cC$, and $\cD$. These subspaces will
be extended and/or expanded to 2-subspaces in $\F_q^6$ for the derived $q$-Fano plane, and
to 3-subspaces in $\F_q^6$ for the residual $q$-Fano plane. Most of these extensions will be performed with
the extension space and hence the representations of these subspaces and the matrices of the extension space
must be matched in their representation to make sure that the outcome will be subspaces with the required properties.
To make these extensions and/or expansions simple to explain we will use certain representations of
2-subspaces and 3-subspaces of $\F_q^4$, and 2-subspaces and 3-subspaces of $\F_q^6$.
These representations will also help to verify the correctness of the construction.
For these representations we form an order between the vectors of length 4 of $\F_q^4$.
For simplicity we will use the standard lexicographic order from the smallest to the largest element.

In the representations which follows we will take only one of the $q-1$ different vectors
from which any two are linearly dependent, i.e., $q+1$ vectors for a 2-subspace and $q^2+q+1$
vectors for a 3-subspace. W.l.o.g. (without loss of generality) the vectors which will be taken will always be those whose
first nonzero element is a \emph{one}.

\vspace{0.2cm}

\noindent
{\bf Representation of 2-Subspaces of $\F_q^r$, $r \in \{ 4,6 \}$:}

\vspace{0.2cm}

A 2-subspace X of $\F_q^r$ will be presented by an $r \times (q+1)$ matrix $M$
and an \emph{expanded representation} by an $r \times (q^2+q+1)$ matrix $E(M)$ (or $E(X)$) as follows.
The first $q+1$ columns of the matrices ($M$ and $E(M)$) will be the $q+1$
vectors of length $r$ of $X$, where each two columns are linearly independent
(let us denote these $q+1$ columns by $Y$), with the following two properties:
\begin{itemize}
\item Any two columns of the $4 \times (q+1)$ matrix defined by the first 4 rows and the first $q+1$ columns
of $Y$ are linearly independent, and hence form a basis for $Y$.

\item Let $v_1 v_2 \ldots v_{q+1}$ be the consecutive columns of the matrix defined by
the first four rows and the first $q+1$ columns of $Y$. The first two columns are the smallest
among the $v+1$ columns in the given lexicographic order and $v_1 < v_2$.
Furthermore, $v_i = \alpha^{i-3} v_1 + v_2$, $3 \leq i \leq q+1$.
\end{itemize}
This completes the definition of $M$. For the definition of $E(M)$,
the next column (the\linebreak $(q+2)$-th column) will be an all-zero column. The next
(and last) $(q-1)(q+1)$ columns will consists of $q-1$ identical copies of $Y$.

Any 2-subspace which cannot be represented in this way will not be considered for this
representation (These are 2-subspaces of $\F_q^6$ which have vectors starting with four \emph{zeroes}.).

The 2-subspaces in Table~\ref{tab:partition} are represented by this definition.

\vspace{0.2cm}

\noindent
{\bf Representation of 3-Subspaces of $\F_q^r$, $r \in \{ 4,6 \}$:}

\vspace{0.2cm}

A 3-subspace X of $\F_q^r$ will be presented by an $r \times (q^2+q+1)$ matrix $M$ as follows.
The first $q+1$ columns of $M$ will be the $q+1$
vectors of length $r$ of a 2-subspace of $X$, where each two columns are linearly independent
(let us denote these $q+1$ columns by $Y$), with the following two properties:
\begin{itemize}
\item The first $q+1$ columns of the $4 \times (q+1)$ matrix defined by the first 4 rows and the first $q+1$ columns
of $Y$ represent a 2-subspace of $\cA$, whose existence is guaranteed by Lemma~\ref{lem:representDbyA}.

\item Let $v_1 v_2 \ldots v_{q+1}$ be the consecutive columns of the matrix defined by
the first four rows and the first $q+1$ columns of $Y$. The first two columns are the smallest
among the $v+1$ columns in the given order and $v_1 < v_2$.
Furthermore, $v_i = \alpha^{i-3} v_1 + v_2$, $3 \leq i \leq q+1$.
\end{itemize}
The next column of $M$ (the $(q+2)$-th column) will be a non-zero column vector $v$ of
length~$r$ linearly independent of the first $q+1$ columns of $M$ (or $Y$).
It will be taken as the smallest vector, in the lexicographic order, among the other columns of $X$.
The next $(q-1)(q+1)$ columns of $M$ will consists
of $q-1$ $r \times (q+1)$ matrices, where the $i$-th matrix, $0 \leq i \leq q-2$, is $\alpha^i v +Y$
(the addition of a column vector $v$ of length $r$ to an $r \times m$ matrix $Y$ is done by
adding $v$ to each column of $Y$.). Hence, any two of the first $q+1$ columns with the $(q+2)$-th column
form a basis for the 3-subspace.

After describing the representations of 2-subspaces and 3-subspaces, we are in a position to describe
how we extend and expand a subspace in $\F_q^4$ to a subspace in $\F_q^6$, while keeping these representations.
To make these extensions and expansions simple, we will give a few properties of our representations whose
proofs are trivial. First let $u_i$ ($u'_i$), $1 \leq i \leq q^2+q+1$, be the $i$-th column in the representation
of two distinct subspaces.

\begin{lemma}
In the representation of a 3-subspace $u_1$, $u_2$, $u_{q+2}$ are linearly independent.
\end{lemma}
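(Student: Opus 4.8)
The plan is to read the claim off directly from the defining properties of the 3-subspace representation, so that essentially no computation is needed. First I would fix the 3-subspace $X$ of $\F_q^r$, $r\in\{4,6\}$, together with its representing matrix $M$, and recall what the three relevant columns are: $u_1$ and $u_2$ are the first two columns of the block $Y$ formed by the first $q+1$ columns of $M$, while $u_{q+2}$ is the nonzero length-$r$ vector $v$ placed in the $(q+2)$-th column of $M$.

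Next I would invoke the two structural facts that are built into the representation. On one hand, the $q+1$ columns of $Y$ are the length-$r$ vectors of a fixed 2-subspace of $X$, and any two of them are linearly independent; in particular $u_1$ and $u_2$ are linearly independent and therefore span that entire 2-subspace, which I will call $W=\Span{u_1,u_2}$. On the other hand, the column $v=u_{q+2}$ was chosen precisely to be linearly independent of the first $q+1$ columns of $M$, so $v\notin\Span{u_1,\dots,u_{q+1}}=W$.

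Combining these two facts, $u_1,u_2$ form a basis of the $2$-dimensional space $W$ while $u_{q+2}$ lies outside $W$, and hence $u_1,u_2,u_{q+2}$ are linearly independent, which is exactly the assertion. (This is also the content of the closing remark in the representation definition, namely that any two of the first $q+1$ columns together with the $(q+2)$-th column form a basis of the $3$-subspace.) I expect no genuine obstacle here; the only point worth stating explicitly is the identification $\Span{u_1,u_2}=\Span{u_1,\dots,u_{q+1}}$, which holds because all $q+1$ columns of $Y$ lie in the $2$-dimensional 2-subspace that $u_1$ and $u_2$ already span, so "linearly independent of the first $q+1$ columns" is the same as "not in $\Span{u_1,u_2}$." The argument is uniform in $r$, so it covers both $r=4$ and $r=6$ at once.
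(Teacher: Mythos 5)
Your proof is correct and is precisely the "trivial" unpacking of the representation definition that the paper intends — the paper itself gives no proof, stating only that these properties' "proofs are trivial." Your identification $\Span{u_1,u_2}=\Span{u_1,\dots,u_{q+1}}$ and the observation that $u_{q+2}$ is chosen outside this span is exactly the right reading of the definition.
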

\begin{lemma}
If for a given 3-subspace and $1 \leq i < j < k \leq q^2+q+1$ we have $\gamma_i u_i +\gamma_j u_j + \gamma_k u_k =0$,
where $\gamma_i , \gamma_j , \gamma_k \in \F_q$,
then for another subspace (of dimension two or three) we have $\gamma_i u'_i +\gamma_j u'_j + \gamma_k u'_k =0$.
\end{lemma}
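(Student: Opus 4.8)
The plan is to show that, in \emph{either} representation, every column is a fixed linear combination of three generating vectors whose coefficients depend only on the column index and not on the particular subspace; the asserted transfer of linear dependencies is then immediate. Concretely, for the given 3-subspace set $v_1 \deff u_1$, $v_2 \deff u_2$, and $v \deff u_{q+2}$. First I would verify, directly from the definition of the 3-subspace representation, that every column has the form $u_m = a_m v_1 + b_m v_2 + c_m v$ for scalars $a_m, b_m, c_m \in \F_q$ determined solely by $m$ (and by $q$ and $\alpha$). This is a matter of reading off the construction: the first two columns give the triples $(1,0,0)$ and $(0,1,0)$; the columns $u_\ell$ with $3 \leq \ell \leq q+1$ give $(\alpha^{\ell-3},1,0)$; the column $u_{q+2}$ gives $(0,0,1)$; and each later column $\alpha^i v + v_\ell$ gives $(a_\ell, b_\ell, \alpha^i)$, where $(a_\ell, b_\ell)$ is the pair already recorded for $v_\ell$.

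The same coefficient table applies verbatim to the expanded representation $E(M)$ of a 2-subspace, provided one takes the third generator to be $v \deff 0$: indeed the $(q+2)$-th column of $E(M)$ is the all-zero column, and the remaining blocks are copies of $Y$, which is exactly what $\alpha^i v + v_\ell$ becomes when $v = 0$. Thus for any subspace, of dimension two or three, and any index $m$, one has $u_m = a_m v_1 + b_m v_2 + c_m v$ with one and the same index-dependent triple $(a_m, b_m, c_m)$.

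With this in hand I would substitute into the hypothesis, rewriting $\gamma_i u_i + \gamma_j u_j + \gamma_k u_k = 0$ as
\[
\Big(\sum_{\ell \in \{i,j,k\}} \gamma_\ell a_\ell \Big) v_1 + \Big( \sum_{\ell \in \{i,j,k\}} \gamma_\ell b_\ell \Big) v_2 + \Big( \sum_{\ell \in \{i,j,k\}} \gamma_\ell c_\ell \Big) v = 0 .
\]
Invoking the previous lemma, namely that $u_1, u_2, u_{q+2}$ (that is, $v_1, v_2, v$) are linearly independent for the given 3-subspace, I conclude that each of the three bracketed coefficient sums vanishes. The point to stress is that these three scalar identities involve only the index-dependent coefficients $a_\ell, b_\ell, c_\ell$ together with the fixed $\gamma_\ell$, with no reference whatsoever to $v_1, v_2, v$.

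Finally, for the second subspace, with its own generators $v_1', v_2', v'$, the same universal expansion gives $\gamma_i u_i' + \gamma_j u_j' + \gamma_k u_k' = \big(\sum_\ell \gamma_\ell a_\ell\big) v_1' + \big(\sum_\ell \gamma_\ell b_\ell\big) v_2' + \big(\sum_\ell \gamma_\ell c_\ell\big) v'$, and since all three coefficient sums were shown to be zero the right-hand side is $0$, which is precisely the claim. The only step demanding genuine care, and the part I would write out most explicitly, is the bookkeeping that establishes the universal coefficient form, in particular the observation that the 2-subspace case is recovered by setting $v = 0$; once that is secured the argument is purely formal and requires no separate treatment according to whether the second subspace has dimension two or three.
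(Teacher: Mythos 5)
Your proof is correct: the paper states this lemma with no proof at all (it is one of the "properties of our representations whose proofs are trivial"), and your universal-coefficient argument --- every column $u_m$ equals $a_m v_1 + b_m v_2 + c_m v$ with $(a_m,b_m,c_m)$ depending only on the index $m$, so linear independence of $v_1,v_2,v$ in the given 3-subspace forces the three coefficient sums to vanish and the dependency transfers to any other representation --- is exactly the verification the authors intend. Your observation that the expanded 2-subspace representation $E(M)$ is recovered by setting $v=0$ is the one genuinely non-obvious bookkeeping point, and you handle it correctly.
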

\begin{lemma}
\label{lem:2in3}
Any 2-subspace $X$ of a 3-subspace $Y$ contains either all the $q+1$ first columns of $Y$ or exactly one
of the first $q+1$ columns of $Y$.
\end{lemma}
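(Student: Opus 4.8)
The plan is to reduce the statement to the elementary fact that two distinct lines of a projective plane meet in exactly one point, phrased in the vector-space language of the representation. The only structural input I need is that the first $q+1$ columns of the representation of $Y$ span a genuine $2$-subspace of $Y$; this is immediate from the definition of the representation of a $3$-subspace, where those $q+1$ columns are declared to be the pairwise linearly independent vectors of a $2$-subspace of $Y$. Call this distinguished $2$-subspace $W$. I also use that each column $u_i$ is the chosen representative (first nonzero entry equal to one) of a distinct $1$-dimensional subspace of $Y$, so that ``$X$ contains $u_i$'' means precisely that the point $\Span{u_i}$ lies in $X$. With this dictionary in place, the case $X=W$ is immediate: then $X$ literally contains all $q+1$ first columns, since those are by definition the vectors of $W=X$.

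The main step is the case $X \neq W$, which I would settle by the dimension identity applied to the two $2$-subspaces $X$ and $W$ sitting inside the $3$-subspace $Y$:
\[
\dim(X \cap W) = \dim X + \dim W - \dim(X + W) = 4 - \dim(X+W).
\]
Since $X,W \subseteq Y$ we have $X + W \subseteq Y$, so $\dim(X+W) \leq 3$; and since $X \neq W$ while both are $2$-dimensional, $X+W$ strictly contains $W$ (otherwise $X \subseteq W$ would force $X=W$), forcing $\dim(X+W)=3$. Hence $\dim(X \cap W)=1$, so $X \cap W$ is a single $1$-dimensional subspace, i.e. one point of $W$. Its normalized representative is one of the first $q+1$ columns of $Y$, and because the intersection is exactly $1$-dimensional it is the only first column lying in $X$. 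This yields ``exactly one''.

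I do not expect any serious obstacle: once the first $q+1$ columns are known to span a $2$-subspace, the dichotomy is forced by the dimension count, with the value $\dim(X\cap W)\in\{1,2\}$ and $\dim(X\cap W)=2$ occurring if and only if $X=W$. The only point requiring a line of care is the bookkeeping between vectors and subspaces: I must check that the distinguished representatives $u_1,\dots,u_{q+1}$ are exactly the points of $W$, so that the $1$-dimensional intersection $X \cap W$ is accounted for by one of these columns and not by some other scalar multiple of it appearing among the later columns of $Y$. This is guaranteed by the normalization convention (first nonzero entry equal to one), which selects a unique column per $1$-dimensional subspace, so no point of $W$ is represented twice and the counts match cleanly.
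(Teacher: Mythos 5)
Your proof is correct, and it is exactly the standard argument the paper has in mind when it declares this lemma's proof ``trivial'' and omits it: the first $q+1$ columns are the normalized points of a distinguished $2$-subspace $W$ of $Y$, and for $X\neq W$ the dimension formula inside the $3$-space $Y$ forces $\dim(X\cap W)=1$, giving exactly one common column. The bookkeeping point you flag (one normalized representative per $1$-dimensional subspace among the first $q+1$ columns) is indeed the only detail needing care, and the paper's normalization convention handles it as you say.
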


\begin{lemma}
\label{lem:exact2in3}
There exists a set $\cP$ which contains $q^2+q+1$ subsets of $\{1,2, \ldots , q^2+q+1 \}$,
each subset is of size $q+1$, such that the columns of the $q^2+q+1$ 2-subspaces of
any $r \times (q^2+q+1)$ matrix $M$, $r \in \{ 4,6 \}$, which represents a 3-subspace, are exactly
on the coordinates of the subsets of $\cP$.
\end{lemma}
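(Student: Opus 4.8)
The plan is to exhibit $\cP$ as the line-set of a single fixed copy of the projective plane $PG(2,q)$, exploiting the fact that the representation writes every column of $M$ as a \emph{subspace-independent} linear combination of three distinguished columns.

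First I would fix an arbitrary $r\times(q^2+q+1)$ matrix $M$ representing a $3$-subspace, with columns $u_1,\dots,u_{q^2+q+1}$. By the lemma immediately preceding this statement, $u_1,u_2,u_{q+2}$ are linearly independent and hence form a basis of the $3$-dimensional space $U=\Span{\{u_1,\dots,u_{q^2+q+1}\}}$. I would then read off from the representation rules a map $p\mapsto(a_p,b_p,c_p)\in\F_q^3$ with $u_p=a_p u_1+b_p u_2+c_p u_{q+2}$ for every $p$, and argue that it depends only on $p$, never on $M$. For $1\le p\le q+1$ the identity $u_p=\alpha^{p-3}u_1+u_2$ ($3\le p\le q+1$) is imposed on the first four rows; since $u_1,u_2$ restrict to a basis of those four coordinates, it propagates to all $r$ rows, giving $(a_p,b_p,c_p)=(1,0,0),(0,1,0),(\alpha^{p-3},1,0)$. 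One has $(a_{q+2},b_{q+2},c_{q+2})=(0,0,1)$ since $u_{q+2}=v$, and a column $\alpha^i v+y_j$ of the $i$-th block equals $\alpha^i u_{q+2}+u_j$, yielding $(a_j,b_j,\alpha^i)$. Thus the coefficient triples are fixed once and for all.

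Next I would invoke the linear isomorphism $T:\F_q^3\to U$ sending the standard basis to $u_1,u_2,u_{q+2}$; it carries $(a_p,b_p,c_p)$ to $u_p$ and maps the $2$-dimensional subspaces of $\F_q^3$ bijectively onto the $2$-subspaces of $U$. Because the columns are pairwise linearly independent, so are the triples $(a_p,b_p,c_p)$, so they form exactly one representative for each of the $q^2+q+1$ points of $PG(2,q)$. Consequently a set of columns spans a $2$-subspace of $U$ precisely when the associated triples are the $q+1$ points of a line of $PG(2,q)$; this is the concrete form of the (3-term relation) transfer lemma, which already guarantees that linear dependence among columns is a function of their positions alone. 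I would therefore set
\[
\cP=\bigl\{\,\{\,p:(a_p,b_p,c_p)\in L\,\}~:~L \text{ a } 2\text{-dimensional subspace of }\F_q^3\,\bigr\},
\]
which is independent of $M$ because the triples are.

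Finally I would check cardinalities and conclude. There are exactly $q^2+q+1$ subspaces $L$, each giving $q+1$ points and hence (via the one-representative-per-point property) a position set of size $q+1$, so $\cP$ consists of $q^2+q+1$ subsets of $\{1,\dots,q^2+q+1\}$ each of size $q+1$. For any $M$ and any $2$-subspace $X$ of the represented $3$-subspace, $T^{-1}(X)$ is a line of $PG(2,q)$, so the columns of $X$ occupy exactly one member of $\cP$, and conversely each $L$ yields a genuine $2$-subspace $T(L)$; the whole argument is uniform in $r\in\{4,6\}$ since the representation rules, and hence the triples, are identical in both cases. The one genuinely delicate point is the first step: establishing that the coefficient triples are truly $M$-independent, and in particular that the relation $v_i=\alpha^{i-3}v_1+v_2$, imposed only on the first four rows, forces the same relation on all $r$ rows. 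Once that is settled, the statement reduces to the fixed incidence geometry of $PG(2,q)$.
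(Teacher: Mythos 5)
Your argument is correct and is exactly the formalization of what the paper treats as trivial (the paper states this lemma without proof): the representation rules assign to each column position a coefficient triple with respect to $u_1,u_2,u_{q+2}$ that is independent of the particular matrix, these triples give one representative per point of $PG(2,q)$, and $\cP$ is the resulting line set. The one delicate step you flag --- propagating $v_i=\alpha^{i-3}v_1+v_2$ from the first four rows to all $r$ rows --- is handled correctly, since the first $q+1$ columns span a $2$-subspace whose length-$4$ prefixes of the first two columns are already linearly independent.
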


\section{Extensions and Expansions of Subspaces}
\label{sec:extend}

The construction of the derived $q$-Fano plane
and the residual $q$-Fano plane will start with 2-subspaces
and 3-subspaces of $\F_q^4$. They will be extended and possibly expanded
to 3-subspaces of $\F_q^6$. We start with a formal definition of the expansion,
which was mentioned before in the representation $E(X)$ of a 2-subspace $X$.

The expansion $E(M,u)$ of an $r \times (q+1)$ matrix $M$, having columns $v_1,v_2,\ldots,v_{q+1}$,
with a column vector $u$ of length $r$ to an $r \times (q^2+q+1)$ matrix as follows.
The next column $v_{q+2}$ is~$u$, and the next $q^2-1$ columns consists
of $q-1$ $r \times (q+1)$ matrices, where the $i$-th matrix is $\alpha^i u +M$.
We note that if $M$ represent a 2-subspace $X$ and $u$ is linearly independent in
the columns of $X$ (i.e. $M$) then $E(M,u)$ represent a 3-subspace. If $M$ represents
a 2-subspaces we can write $E(X,u)$ instead of $E(M,u)$.

The following simple lemmas which were also proved in~\cite{Etz15} provide some of the
foundations for the extensions (with possible expansions).

\begin{lemma}
\label{lem:2To2}
Each 2-subspace in $\F_q^r$ has exactly $q^2$ distinct extensions to a 2-subspace in $\F_q^{r+1}$.
\end{lemma}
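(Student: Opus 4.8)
The plan is to read an \emph{extension} of a $2$-subspace $X \subseteq \F_q^r$ to $\F_q^{r+1}$ as a $2$-subspace $X' \subseteq \F_q^{r+1}$ from which deleting the last coordinate recovers $X$, i.e. $\cZ(X') = X$. First I would note that $\cZ$ is the linear projection $\F_q^{r+1} \to \F_q^r$ whose kernel is $\Span{\{e\}}$, where $e \deff (0,\dots,0,1)$ is the last unit vector. Since $\cZ(X')$ is required to be the $2$-dimensional space $X$ while $\dim X' = 2$, the rank--nullity relation $\dim \cZ(X') = \dim X' - \dim\!\left(X' \cap \Span{\{e\}}\right)$ forces $X' \cap \Span{\{e\}} = \{0\}$, so $e \notin X'$ and $\cZ|_{X'} : X' \to X$ is an isomorphism. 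Thus every extension avoids $e$ automatically, which is the key structural fact.

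Next I would parametrize the extensions as graphs. Because $\cZ|_{X'}$ is an isomorphism onto $X$, each vector of $X'$ has the form $(v,\phi(v))$ for a unique $v \in X$, where $\phi \colon X \to \F_q$ reads off the last coordinate; linearity of $X'$ makes $\phi$ a linear functional, so $X' = \{(v,\phi(v)) : v \in X\}$. Conversely, the graph of any linear functional $\phi \colon X \to \F_q$ is a $2$-subspace of $\F_q^{r+1}$ projecting onto $X$ and not containing $e$. This sets up a correspondence between the extensions of $X$ and the linear functionals on $X$.

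Finally I would count, using a fixed basis $\{a,b\}$ of $X$: a functional $\phi$ is determined freely by the pair $(\phi(a),\phi(b)) \in \F_q^2$, so each extension is $\Span{\{(a,\phi(a)),(b,\phi(b))\}}$ and there are exactly $q^2$ choices. The one step I would treat as the main obstacle is checking that this correspondence is injective at the level of \emph{subspaces} rather than spanning sets: if $\phi_1 \neq \phi_2$, pick $v \in X$ with $\phi_1(v) \neq \phi_2(v)$; then $(v,\phi_1(v))$ lies in the first extension but not the second, since the unique preimage of $v$ under $\cZ$ in the second extension is $(v,\phi_2(v))$. This separates the $q^2$ graphs, and the count stands at exactly $q^2$.
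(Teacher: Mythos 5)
Your argument is correct and complete. Note that the paper itself contains no proof of this statement: it is listed among the ``simple lemmas which were also proved in~\cite{Etz15}'', so there is no internal proof to compare against. Your route --- using rank--nullity to show that any $2$-subspace $X'\subseteq\F_q^{r+1}$ with $\cZ(X')=X$ must meet the kernel $\Span{\{e\}}$ trivially, hence is the graph $\{(v,\phi(v)):v\in X\}$ of a linear functional $\phi$ on $X$, and then counting the $q^2$ functionals via their values on a basis --- is the natural argument, and you correctly handle the one subtle point, namely that distinct functionals yield distinct \emph{subspaces} and not merely distinct spanning sets. The reading of ``extension'' as a preimage under $\cZ$ of the prescribed dimension is the intended one, and your count is consistent with the surrounding lemmas: the unique $3$-dimensional extension of Lemma~\ref{lem:2To3} is the full preimage $\cZ^{-1}(X)$, the $q^3$ of Lemma~\ref{lem:3To3} is the same functional count for a $3$-subspace, and iterating your count twice recovers the $q^4$ of Lemma~\ref{lem:extend}.
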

\begin{lemma}
\label{lem:2To3}
Each 2-subspace in $\F_q^r$ has a unique extension (with expansion) to a 3-subspace in $\F_q^{r+1}$.
\end{lemma}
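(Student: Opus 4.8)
The plan is to read ``extension (with expansion)'' through the coordinate-dropping map $\cZ\colon \F_q^{r+1}\to\F_q^r$ that deletes the last coordinate of every vector, which is the natural device for describing the extensions of Lemma~\ref{lem:2To2}. Writing $u$ for the last standard basis vector $(0,\dots,0,1)\in\F_q^{r+1}$, this map is linear and surjective with $\ker\cZ=\Span{\{u\}}$, so $\dim\ker\cZ=1$. An extension of the $2$-subspace $X\subseteq\F_q^r$ to a $2$-subspace of $\F_q^{r+1}$ is a $2$-subspace $W$ with $\cZ(W)=X$ on which $\cZ$ is injective (these are the $q^2$ subspaces of Lemma~\ref{lem:2To2}), so the corresponding extension-with-expansion is, by the same token, a $3$-subspace $W$ of $\F_q^{r+1}$ with $\cZ(W)=X$; the extra dimension is exactly the ``expansion''. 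I would prove that there is exactly one such $W$.

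For existence I would take $W\deff\cZ^{-1}(X)$, concretely $W=\{(v,s):v\in X,\ s\in\F_q\}$, which is the span of the ``append-a-zero'' copy $\{(v,0):v\in X\}$ of $X$ together with $u$. Since $\cZ$ is surjective with one-dimensional kernel contained in $\cZ^{-1}(X)$, rank--nullity gives $\dim W=\dim X+\dim\ker\cZ=3$, and $\cZ(W)=X$ by construction, so $W$ is a genuine $3$-subspace extension of $X$. This is also precisely the output of applying the expansion operator $E(M,u)$ to (any $2$-subspace extension of) the representing matrix $M$ of $X$ with expansion direction equal to the new coordinate vector $u$.

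The substantive point is uniqueness, where the count in Lemma~\ref{lem:2To2} must collapse from $q^2$ to $1$. Let $W$ be any $3$-subspace with $\cZ(W)=X$. Applying rank--nullity to $\cZ|_W$ gives $3=\dim\cZ(W)+\dim(W\cap\ker\cZ)=2+\dim(W\cap\ker\cZ)$, so $W\cap\ker\cZ=\ker\cZ$ and hence $u\in W$. Then for every $v\in X$ there is some $w=(v,s)\in W$, and since $u\in W$ we get $(v,s')=w+(s'-s)u\in W$ for all $s'\in\F_q$; thus $\cZ^{-1}(X)\subseteq W$, and equality of dimensions forces $W=\cZ^{-1}(X)$. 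The conceptual content is exactly that once the expansion adjoins the new coordinate direction $u$, the $q^2$ distinct choices of last coordinate that distinguished the $2$-subspace extensions of Lemma~\ref{lem:2To2} become linearly absorbed, so every one of them expands to the single $3$-subspace $\cZ^{-1}(X)$. I expect no deep obstacle here; the only thing to be careful about is fixing the statement so that ``extension with expansion'' unambiguously means the condition $\cZ(W)=X$ with $\dim W=3$, rather than an uncontrolled dimension-raising inside $\F_q^{r+1}$ (which would of course fail to be unique).
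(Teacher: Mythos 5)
Your proof is correct, and it is consistent with the only evidence the paper gives of its intended argument: the paper states this lemma without proof (deferring to~\cite{Etz15}), and in Section~\ref{sec:extend} it realizes the unique extension-with-expansion exactly as you do, by appending a zero coordinate to $X$ and expanding with the new unit vector, which yields your $\cZ^{-1}(X)=\{(v,s):v\in X,\ s\in\F_q\}$. Your rank--nullity uniqueness argument (any $3$-subspace $W$ with $\cZ(W)=X$ must contain $\ker\cZ$ and hence equal $\cZ^{-1}(X)$) is sound, and your closing remark correctly explains why the $q^2$ two-dimensional extensions of Lemma~\ref{lem:2To2} all collapse to this single $3$-subspace.
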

\begin{lemma}
\label{lem:3To3}
Each 3-subspace in $\F_q^r$ has exactly $q^3$ distinct extensions to a 3-subspace in~$\F_q^{r+1}$.
\end{lemma}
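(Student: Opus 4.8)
The plan is to identify each dimension-preserving extension with a linear functional on the $3$-subspace $X$ and then simply count the functionals. Extending $X \subseteq \F_q^r$ to a subspace of $\F_q^{r+1}$ of the \emph{same} dimension means producing a $3$-subspace $Y \subseteq \F_q^{r+1}$ whose image under deleting the last coordinate (the map $\cZ$) is exactly $X$. So I would fix such an $X$ and describe the set of all $3$-subspaces $Y$ of $\F_q^{r+1}$ with $\cZ(Y) = X$ and $\dim Y = 3$.

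First I would observe that for any such $Y$ the restriction $\cZ|_Y \colon Y \to X$ is onto $X$ and, since $\dim Y = \dim X = 3$, is therefore an isomorphism by rank--nullity; equivalently, its kernel (the vectors of $Y$ of the form $(0,\dots,0,\ast)$) is trivial, so $Y$ contains no nonzero multiple of the last unit vector. Consequently $Y$ is the graph of a unique linear functional $f \colon X \to \F_q$, namely $Y = \{\,(x,f(x)) : x \in X\,\}$, where $f(x)$ is the last coordinate of the unique preimage of $x$ under $\cZ|_Y$. Conversely, for every linear functional $f \colon X \to \F_q$ the set $Y_f \deff \{\,(x,f(x)) : x \in X\,\}$ is a $3$-subspace of $\F_q^{r+1}$ with $\cZ(Y_f) = X$, so it is an extension of $X$.

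Next I would check that $f \mapsto Y_f$ is a bijection between functionals and extensions. Surjectivity is the graph description above; for injectivity, if $Y_f = Y_g$ then each $x \in X$ has a unique element of the subspace above it, forcing $f(x) = g(x)$ for all $x$, hence $f = g$. The count then follows at once: the space of linear functionals $X \to \F_q$ is the dual $X^\ast$, which has dimension $\dim X = 3$, so there are $q^3$ of them, and therefore $q^3$ distinct extensions. This is the exact analogue of Lemma~\ref{lem:2To2}, with $3$ in place of $2$; the same argument gives $q^{\dim X}$ in general.

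The only genuinely non-trivial point --- and hence where I would concentrate the care --- is the \emph{graph reduction}: showing that every dimension-preserving extension arises as the graph of a functional, i.e.\ that $\cZ|_Y$ is injective. Everything else (that graphs are subspaces projecting onto $X$, and that distinct functionals give distinct graphs) is routine. An equivalent, more hands-on route that avoids the functional language is to fix a basis $b_1,b_2,b_3$ of $X$, note that an extension is obtained precisely by appending a last coordinate $c_i \in \F_q$ to each $b_i$, and verify that the $q^3$ choices of $(c_1,c_2,c_3)$ yield pairwise distinct $3$-subspaces; I would use whichever phrasing meshes best with the surrounding representation conventions.
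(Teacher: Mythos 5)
Your proof is correct. The paper itself gives no argument for this lemma (it is stated as one of several ``simple lemmas'' proved in the cited reference~\cite{Etz15}), but your identification of dimension-preserving extensions with linear functionals on $X$ --- equivalently, with the $q^3$ choices of a last coordinate for each of the three basis vectors --- is exactly the counting the paper relies on later, e.g.\ when it describes the $q^3$ extensions of a 3-subspace of $\F_q^5$ via ``any of the $q^3$ linear combinations of the first five rows'' or ``any of the $q^3$ assignments from $\F_q$ to positions 1, 2, and $q+2$''. The one point you flag as needing care (injectivity of $\cZ|_Y$) is indeed the crux, and your rank--nullity argument settles it.
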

\begin{lemma}
\label{lem:extend}
Each 2-subspace in $\F_q^4$ has exactly $q^4$ distinct extensions to a 2-subspace in $\F_q^6$.
Each such extension is done by a different $2 \times (q+1)$ matrix of the extension space.
\end{lemma}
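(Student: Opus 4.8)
The statement to prove is Lemma~\ref{lem:extend}: each 2-subspace in $\F_q^4$ has exactly $q^4$ distinct extensions to a 2-subspace in $\F_q^6$, each realized by a different $2 \times (q+1)$ matrix of the extension space. My plan is to combine the counting already available from Lemma~\ref{lem:2To2} with the structure of the extension space. First I would apply Lemma~\ref{lem:2To2} twice, once going from $\F_q^4$ to $\F_q^5$ (yielding $q^2$ extensions) and once from $\F_q^5$ to $\F_q^6$ (yielding another $q^2$ for each), which immediately gives $q^2 \cdot q^2 = q^4$ as the total count of distinct 2-subspace extensions. The less mechanical half of the statement is the assertion that these $q^4$ extensions correspond \emph{bijectively} to the $q^4$ matrices of the extension space, so I would devote the main effort there.

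The key steps, in order, are as follows. I would start with a 2-subspace $X \subset \F_q^4$ in its prescribed representation, an $r \times (q+1)$ matrix whose columns $v_1,\dots,v_{q+1}$ satisfy $v_i = \alpha^{i-3} v_1 + v_2$ for $3 \le i \le q+1$. An extension to $\F_q^6$ appends two new coordinates (rows 5 and 6) to each of these $q+1$ columns. Because the extension must preserve the linear dependency structure among the columns (by the representation conventions, any extension keeps the relation $v_i = \alpha^{i-3}v_1 + v_2$), the two appended rows are forced to themselves form a $2 \times (q+1)$ matrix whose columns obey exactly the same recurrence. That is precisely the defining condition (property 2) of a matrix in the extension space. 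Next I would observe that the appended $2 \times (q+1)$ matrix is completely determined by its first two columns, each an arbitrary vector in $\F_q^2$, giving $q^2 \cdot q^2 = q^4$ free choices; this matches the known cardinality of the extension space and confirms the map "extension $\mapsto$ appended matrix" lands in the extension space and is surjective onto it. Finally I would check injectivity: two different extensions produce two different appended matrices, since the appended rows are literally the rows 5 and 6 of the extended subspace, so distinct extensions cannot share the same extension-space matrix.

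The main obstacle, and the step I would treat most carefully, is verifying that every valid extension really does force the appended $2 \times (q+1)$ block to satisfy the recurrence $v_i = \alpha^{i-3} v_1 + v_2$, rather than this being merely a convenient normalization. The point is that a 2-subspace of $\F_q^6$ has a canonical representation in which \emph{all} $q+1$ columns satisfy the recurrence simultaneously across all six coordinates; since the first four rows already satisfy it (they equal the representation of $X$), and the recurrence is a coordinatewise linear identity, the last two rows must satisfy it as well. I would make this explicit by noting that the relation $v_i = \alpha^{i-3} v_1 + v_2$ holds entry-by-entry, so restricting to rows 5 and 6 yields the identical relation for the appended block, placing it in the extension space. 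Once this forcing is established, the bijection with the extension space follows from the matching counts $q^4 = q^4$ together with injectivity, and the proof is complete.
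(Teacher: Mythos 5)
Your proposal is correct and follows essentially the same route as the paper: the paper's own justification (given tersely for the restated Lemma~\ref{lem:q4extensions}, with Lemma~\ref{lem:extend} itself cited from~\cite{Etz15}) is exactly the bijection between extensions of $X$ and the $q^4$ matrices of the extension space, which you spell out in full, including the key forcing step that the appended two rows must obey $v_i=\alpha^{i-3}v_1+v_2$ because each full column of the extended representation is the unique lift of its top-four-row part. Your additional cross-check via two applications of Lemma~\ref{lem:2To2} is consistent with, but not needed beyond, the paper's argument.
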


In the extensions with possible expansions required in our construction, these lemmas are implemented as follows.

\noindent
{\bf Extension of a 2-subspace from $\F_q^4$ to a 2-subspace of $\F_q^6$:}

\vspace{0.2cm}

Let $X$ be any 2-subspace of $\F_q^4$ which is going to be extended to a 2-subspace
of~$\F_q^6$.
This extension can be done in two steps:
\begin{enumerate}
\item Choose a $2 \times (q+1)$ matrix $Z$ from the extension space.

\item Form a $6 \times (q+1)$ representation matrix for a 2-subspace whose
first four rows is the $4 \times (q+1)$ matrix representation of $X$ and last two rows is $Z$.
\end{enumerate}

\begin{lemma}
\label{lem:q4extensions}
A 2-subspace $X$ of $\F_q^4$ can be extended in $q^4$ distinct ways to a 2-subspace of~$\F_q^6$.
\end{lemma}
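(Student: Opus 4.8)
The plan is to turn the two-step extension procedure just described into a bijection between the matrices of the extension space and the 2-subspaces of $\F_q^6$ that extend $X$, and then to read off the count from the fact, established earlier, that the extension space consists of exactly $q^4$ matrices. Concretely, fix the canonical $4 \times (q+1)$ representation matrix of $X$ with columns $v_1,v_2,\ldots,v_{q+1}$ satisfying $v_i = \alpha^{i-3} v_1 + v_2$ for $3 \le i \le q+1$. Given any matrix $Z$ of the extension space, with columns $z_1,\ldots,z_{q+1}$ obeying the same relation $z_i = \alpha^{i-3} z_1 + z_2$, form the $6 \times (q+1)$ matrix $M$ obtained by stacking $Z$ below the representation of $X$.

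First I would verify that $M$ is a legal canonical representation of a 2-subspace $Y$ of $\F_q^6$ that extends $X$. Since both the top four rows and $Z$ obey the relation, the full columns $w_1,\ldots,w_{q+1}$ of $M$ satisfy $w_i = \alpha^{i-3} w_1 + w_2$; moreover $w_1,w_2$ are linearly independent already in their first four coordinates (as $X$ is a 2-subspace), so $\dim Y = 2$ and all other columns lie in $\Span{\{w_1,w_2\}}$. The ordering of the columns and the leading-one normalization for a 2-subspace of $\F_q^6$ are, by definition, dictated by the first four rows alone, which are exactly those of $X$; hence $M$ is automatically in canonical form, and deleting the last two rows recovers $X$, so $Y$ is an extension of $X$. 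Injectivity of $Z \mapsto Y$ is then immediate, since the last two rows of the canonical representation of $Y$ are precisely $Z$.

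For surjectivity I would take an arbitrary 2-subspace $Y$ of $\F_q^6$ projecting onto $X$; because $\dim Y = \dim X = 2$, the projection onto the first four coordinates restricts to an isomorphism $Y \to X$, so no vector of $Y$ starts with four zeroes and $Y$ admits the canonical representation, whose first four rows must coincide with those of $X$ and whose last two rows then satisfy the extension-space relation, i.e. form a matrix $Z$ of the extension space. Thus every extension arises from a unique $Z$, and since the extension space has exactly $q^4$ matrices (the first two columns are free, the rest forced), the bijection gives exactly $q^4$ distinct extensions of $X$. I expect the only delicate point to be this canonicality/surjectivity step: one must be certain that the normalization conventions for a 2-subspace of $\F_q^6$ are governed entirely by the first four coordinates, so that an arbitrary extension's canonical form really does split as the representation of $X$ stacked over an extension-space matrix; granting this, the remainder is routine. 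As a sanity check, an extension is nothing but the graph of a linear map $X \to \F_q^2$, and there are exactly $q^4$ such maps, in agreement with applying Lemma~\ref{lem:2To2} twice.
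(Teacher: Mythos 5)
Your proposal is correct and follows essentially the same route as the paper, whose proof simply asserts that each of the $q^4$ matrices $Z$ of the extension space yields a different 2-subspace of $\F_q^6$ and that all extensions arise this way; you supply the details (canonicality of the stacked matrix, injectivity via uniqueness of the canonical representation, and surjectivity via the projection isomorphism) that the paper leaves implicit.
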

\begin{proof}
There are $q^4$ distinct ways to choose an extension matrix $Z$ from the extension space.
Each one yields a different 2-subspace of $\F_q^6$ and all extensions can be formed in this way.
\end{proof}

\vspace{0.2cm}

\noindent
{\bf Extension of a 2-subspace from $\F_q^4$ to a 3-subspace of $\F_q^6$:}

\vspace{0.2cm}

There are two distinct ways to extend a 2-subspace $X$ of $\F_q^4$ to a 3-subspace
of $\F_q^6$.

One way is to extend $X$ first to one of the $q^2$ distinct 2-subspaces of $\F_q^5$ and then
use a unique extension (with expansion) to a 3-subspace of $\F_q^6$.
This is done by extending $X$ to a\linebreak 2-subspace $\tilde{X}$ of $\F_q^6$ by appending to $X$ any one of
the $q^2$ matrices of the extension space whose second row is an all-zero row.
The unique 3-subspace of $\F_q^6$ is obtain by expanding $X$ with $e_6$, the unit vector
of length 6 with the \emph{one} in the last position. Hence, the final 3-subspace
is $E(\tilde{X},e_6)$. Therefore, there are $q^2$ distinct ways for this extension (with expansion).

The second way is to extend $X$ in a unique way (with expansion) to a 3-subspace of $\F_q^5$.
The 3-subspace can be extended in $q^3$ distinct ways to a 3-subspace of $\F_q^6$.
This is done first by appending to $X$ an all-zero row and expand is with $e_5$, the unit vector
of length 5 with the \emph{one} in the last position. There are $q^3$ ways to extend the 3-subspace
of $\F_q^5$ to a 3-subspace of $\F_q^6$. This is done either by using any of the $q^3$ linear
combinations of the first five rows to form the 6-th row, or by taking any of the $q^3$ assignments
from $\F_q$ to positions 1, 2, and $q+2$, and the other positions are fixed by the linear combinations
of the other columns.

\begin{lemma}
\label{lem:q4extensions}
A 2-subspace $X$ of $\F_q^4$ can be extended in $q^3+q^2$ distinct ways to a 3-subspace of~$\F_q^6$.
\end{lemma}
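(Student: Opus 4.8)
The plan is to assemble the count from the two constructions described just above, and the only thing that needs genuine verification is that these two families of $3$-subspaces are disjoint and internally distinct, with sizes $q^2$ and $q^3$. Let $\pi\colon\F_q^6\to\F_q^4$ be the projection onto the first four coordinates, and set $K=\Span{\{e_5,e_6\}}=\ker\pi$, where $e_5,e_6$ are the unit vectors of $\F_q^6$ with a single \emph{one} in coordinates $5$ and $6$. An extension of $X$ to a $3$-subspace of $\F_q^6$ is precisely a $3$-subspace $X'$ with $\pi(X')=X$; since $\dim X'-\dim X=1$, each such $X'$ meets $K$ in a line $X'\cap K$, and I would record this line as the invariant that separates the two constructions.

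First I would verify that the first construction yields $q^2$ distinct subspaces. By Lemma~\ref{lem:2To2} there are exactly $q^2$ intermediate $2$-subspaces $\tilde{X}$ of $\F_q^5$ extending $X$, and by Lemma~\ref{lem:2To3} each has a unique extension (with expansion) to the $3$-subspace $X'=\tilde{X}\oplus\Span{\{e_6\}}$ of $\F_q^6$. These stay distinct because $\tilde{X}$ is recovered as the set of vectors of $X'$ with zero sixth coordinate. Moreover every such $X'$ contains $e_6$, and because $\tilde{X}$ projects isomorphically onto $X$, the only vector of $\tilde{X}$ with vanishing first four coordinates is $0$; hence $X'\cap K=\Span{\{e_6\}}$ for all of them.

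Next I would verify that the second construction yields $q^3$ distinct subspaces. By Lemma~\ref{lem:2To3} the expansion of $X$ by $e_5$ gives the unique $3$-subspace $W=X\oplus\Span{\{e_5\}}$ of $\F_q^5$, and by Lemma~\ref{lem:3To3} this has exactly $q^3$ distinct extensions to $\F_q^6$, each realized as the graph $X'=\{(w,\phi(w)):w\in W\}$ of a linear functional $\phi\colon W\to\F_q$; distinct $\phi$ give distinct graphs, so the $q^3$ are distinct. The only vectors of $W$ with vanishing first four coordinates are the multiples of $e_5$, so here $X'\cap K=\Span{\{e_5+\beta e_6\}}$ with $\beta=\phi(e_5)\in\F_q$.

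The main point — and the step I expect to be the only real obstacle — is disjointness, which now falls out of the invariant: subspaces from the first construction have $X'\cap K=\Span{\{e_6\}}$, while those from the second have $X'\cap K=\Span{\{e_5+\beta e_6\}}$, and $\Span{\{e_6\}}\neq\Span{\{e_5+\beta e_6\}}$ for every $\beta\in\F_q$ since the fifth coordinate distinguishes them. Hence the two families are disjoint and the total is $q^2+q^3$. As a consistency check confirming that these are \emph{all} extensions of $X$, the preimage $\pi^{-1}(X)$ is a $4$-dimensional space whose hyperplanes number $\frac{q^4-1}{q-1}=q^3+q^2+q+1$, and a hyperplane projects onto $X$ exactly when it does not contain $K$; the $q+1$ hyperplanes containing $K$ are excluded, again leaving $q^3+q^2$.
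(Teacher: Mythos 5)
Your proposal is correct. The paper in fact gives no explicit proof of this lemma: it simply follows the two preceding paragraphs, which describe the two extension methods (via a $2$-subspace of $\F_q^5$, giving $q^2$ extensions each containing $e_6$; and via the unique $3$-subspace of $\F_q^5$, giving $q^3$ extensions), and tacitly adds $q^2+q^3$. You use the same decomposition, but you supply the two points the paper leaves unstated: that the two families are disjoint, and that together they exhaust all extensions. Your invariant $X'\cap K$ (with $K=\Span{\{e_5,e_6\}}=\ker\pi$) handles disjointness cleanly, since the line $\Span{\{e_6\}}$ is never of the form $\Span{\{e_5+\beta e_6\}}$; and your closing count --- the $q^3+q^2+q+1$ hyperplanes of the $4$-dimensional space $\pi^{-1}(X)$, minus the $q+1$ that contain $K$ --- independently confirms that $q^3+q^2$ is the total number of $3$-subspaces $X'$ with $\pi(X')=X$, so nothing is missed. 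This makes your argument strictly more complete than the paper's implicit one, while buying the same conclusion; the hyperplane count alone would in fact suffice to prove the lemma without appealing to the two constructions at all.
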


\vspace{0.2cm}

\noindent
{\bf Extension of a 3-subspace from $\F_q^4$ to a 3-subspace of $\F_q^6$:}

Let $Y$ be any 3-subspace of $\F_q^4$ which is going to be extended to a 3-subspace
of $\F_q^6$.
This extension can be done in four steps:
\begin{enumerate}
\item Choose a $2 \times (q+1)$ matrix $Z$ from the extension space.

\item Choose a column vector $u$ of length two over $\F_q$.

\item Form the $2 \times (q^2 +q+1)$ expansion $E(Z,u)$.

\item Form a $6 \times (q^2+q+1)$ representation matrix for a 3-subspace whose
first four rows is the matrix representation of $Y$ and last two rows is $E(Z,u)$.
\end{enumerate}

\begin{lemma}
A 3-subspace $Y$ of $\F_q^4$ can be extended in $q^6$ distinct ways to a 3-subspace of $\F_q^6$.
\end{lemma}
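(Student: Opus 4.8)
The plan is to read off the two free parameters in the four-step construction, verify that each choice genuinely produces a $3$-subspace extending $Y$, and then show that different choices give different subspaces, so that the total is the product of the numbers of choices. Write $M$ for the $4\times(q^2+q+1)$ matrix representing $Y$. By the representation convention for $3$-subspaces, the first $q+1$ columns of $M$ have first four rows equal to the $2$-subspace $Y_0\in\cA$ contained in $Y$ (unique by Lemma~\ref{lem:representDbyA}), the $(q+2)$-th column is a vector $w\notin Y_0$, and, since any two of the first $q+1$ columns together with the $(q+2)$-th column form a basis, the columns $v_1,v_2,w$ form a basis of $Y$. In the construction we stack $M$ on top of $E(Z,u)$, where $Z$ ranges over the $q^4$ matrices of the extension space and $u$ over the $q^2$ column vectors of length two.

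For validity, I would observe that the stacked $6\times(q^2+q+1)$ matrix is exactly the expansion $E(X',\binom{w}{u})$ of the $2$-subspace $X'$ of $\F_q^6$ whose first four rows are $Y_0$ and whose last two rows are $Z$. Because the first four coordinates of $\binom{w}{u}$ equal $w\notin Y_0$, the vector $\binom{w}{u}$ is linearly independent of the columns of $X'$, so by the expansion property (appending a vector independent of a $2$-subspace yields a $3$-subspace, as noted after the definition of $E(M,u)$) the stacked matrix represents a $3$-subspace of $\F_q^6$. Deleting its last two rows returns $M$, so this $3$-subspace projects onto $Y$ and is a genuine extension.

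For the count and distinctness I would pass to the ``graph'' description of extensions. Since an extension $W$ and $Y$ both have dimension $3$ while $W$ deletes back to $Y$, the subspace $W$ meets the set of vectors supported on the last two coordinates trivially; thus $W$ is the graph $\{(y,\phi(y)):y\in Y\}$ of a unique linear map $\phi\colon Y\to\F_q^2$, and conversely each such $\phi$ yields an extension, so there are exactly $q^{2\cdot 3}=q^6$ of them. Matching the construction to this parametrization, reading the last two rows of the stacked matrix off the basis columns shows that the constructed $W$ is the graph of the map with $\phi(v_1)=z_1$, $\phi(v_2)=z_2$, $\phi(w)=u$, where $z_1,z_2$ are the first two columns of $Z$. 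A matrix of the extension space is determined by its first two columns, which range freely over $\F_q^2\times\F_q^2$, so the triple $(z_1,z_2,u)$ ranges over all of $(\F_q^2)^3$; and because $v_1,v_2,w$ is a basis of $Y$, the assignment $(z_1,z_2,u)\mapsto\phi$ is a bijection onto $\mathrm{Hom}(Y,\F_q^2)$. Hence the $q^4\cdot q^2=q^6$ parameter choices give $q^6$ pairwise distinct extensions, and these are all of them.

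The step I expect to be the main obstacle is distinctness. The matrix produced by the construction need not be the canonical representation of $W$ (the appended column $\binom{w}{u}$ may fail to be lexicographically smallest), so one cannot simply recover $(Z,u)$ from a canonical form. The graph description is what circumvents this: it identifies each constructed subspace with a linear map evaluated on a fixed basis of $Y$, reducing distinctness to the evident injectivity of $(z_1,z_2,u)\mapsto\phi$. An alternative more in the spirit of the preceding lemmas is to bound the number of extensions above by $q^6$ by applying Lemma~\ref{lem:3To3} twice along $\F_q^4\to\F_q^5\to\F_q^6$ (a factor $q^3$ at each step) and to combine this bound with the injectivity of the construction; but the graph argument delivers the exact count directly.
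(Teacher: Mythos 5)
Your proof is correct, and its skeleton --- count the $q^4$ choices of $Z$ times the $q^2$ choices of $u$ --- is the same as the paper's. The difference is in what gets justified. The paper's proof consists of three sentences: it counts the parameter choices, asserts that each choice yields a different $3$-subspace ``since the process starts with a 3-subspace,'' and asserts without argument that every extension arises this way. You supply both missing pieces with the graph parametrization: an extension $W$ of $Y$ is exactly the graph of a linear map $\phi\colon Y\to\F_q^2$ (because the projection $W\to Y$ is a surjection between $3$-dimensional spaces, hence an isomorphism), so the total number of extensions is $|\mathrm{Hom}(Y,\F_q^2)|=q^6$ on the nose; and reading $\phi$ off the basis columns $v_1,v_2,w$ identifies the constructed subspace with the map sending $(v_1,v_2,w)\mapsto(z_1,z_2,u)$, which makes the correspondence $(Z,u)\leftrightarrow\phi$ a visible bijection. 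This buys you both injectivity (which the paper's parenthetical justification does not really establish --- starting from a $3$-subspace does not by itself prevent two different suffixes from producing the same subspace of $\F_q^6$) and surjectivity in one stroke, at the cost of a short detour through the representation conventions to check that the stacked matrix is $E\bigl(X',\binom{w}{u}\bigr)$ and hence genuinely a $3$-subspace. Your proof is the one I would keep.
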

\begin{proof}
There are $q^4$ distinct ways to choose an extension matrix $Z$ from the extension space
and $q^2$ way to choose the vector $u$ for $E(Z,u)$. Each such choice will yield a different
3-subspace of $\F_q^6$ since the process starts with a 3-subspace. To complete the proof
we note that each extension can be formed in this way.
\end{proof}

\section{Construction of Residual $q$-Fano Planes}
\label{sec:construction}

The construction of the derived $q$-Fano plane and the residual $q$-Fano plane is based on extensions and
possible expansion of the subspaces in the sets $\cA$, $\cB$,
$\cC$, and $\cD$, which contain 2-subspaces and 3-subspaces of $\F_q^4$ into
3-subspaces in $\F_q^6$. These extensions and/or expansions and the extension of the null-subspace
of $\F_q^4$ will form the residual $q$-Fano plane.

There is one possible way to form a 2-subspace of $\F_q^6$ whose first four rows in the
matrix representation corresponds to the 0-subspace of $\F_q^4$.
The set of size one which contains this 2-subspace will be denoted by $\dS_0$.

\vspace{0.2cm}

\noindent
{\bf Extension of Type A:}

\vspace{0.1cm}

The set $\cA$ of 2-subspaces of $\F_q^4$ contains $q^2+1$ subspaces.
Each one is extended in the $q^2$ possible distinct ways, based on the extension code $C$, to a 2-subspace in $\F_q^6$.
The result is a set with $q^4 + q^2$ distinct 2-subspaces of $\F_q^6$. This set will be
denoted by $\dS_\cA$.

\begin{lemma}
\label{lem:spread_A}
The set $\dS_0 \cup \dS_\cA$ is a spread in $\F_q^6$.
\end{lemma}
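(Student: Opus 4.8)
The plan is to verify the defining property of a spread directly: every nonzero vector of $\F_q^6$ lies in exactly one member of $\dS_0 \cup \dS_\cA$. As orientation, a spread of $\F_q^6$ must contain $\frac{q^6-1}{q^2-1}=q^4+q^2+1$ two-subspaces, and indeed $|\dS_0 \cup \dS_\cA| = 1+(q^2+1)q^2 = q^4+q^2+1$; since each $2$-subspace has $q^2-1$ nonzero vectors, the members together carry $(q^4+q^2+1)(q^2-1)=q^6-1$ nonzero vectors counted with multiplicity, exactly the number of nonzero vectors of $\F_q^6$. This is consistent with a spread, and I will establish the property in the stronger exact form, from which disjointness and distinctness follow automatically.

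First I would fix notation for a member of $\dS_\cA$. A subspace $X\in\cA$ is represented by columns $v_1,\dots,v_{q+1}\in\F_q^4$ with $v_i=\alpha^{i-3}v_1+v_2$, and a matrix $Z\in C$ has columns $z_1,\dots,z_{q+1}\in\F_q^2$ with $z_i=\alpha^{i-3}z_1+z_2$; stacking gives the $6\times(q+1)$ matrix whose $i$-th column is $\binom{v_i}{z_i}$, and because both blocks obey the same recurrence this is a legitimate $2$-subspace representation. Hence the member $S_{X,Z}\in\dS_\cA$ has nonzero vectors exactly $\left\{\mu\binom{v_i}{z_i} : \mu\in\F_q^{*},\ 1\le i\le q+1\right\}$. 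In particular the projection of $S_{X,Z}$ onto the first four coordinates is the $2$-subspace $X$, and since both spaces are $2$-dimensional this projection is an isomorphism; consequently every nonzero vector of every member of $\dS_\cA$ has nonzero projection $(w_1,w_2,w_3,w_4)$.

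Now take an arbitrary nonzero $w=(w_1,\dots,w_6)$ and write $\bar w=(w_1,w_2,w_3,w_4)$ and $\hat w=(w_5,w_6)$. If $\bar w=0$ then $w$ lies in the single member of $\dS_0$ (whose vectors are exactly those with zero first four coordinates), and by the previous paragraph in no member of $\dS_\cA$; so $w$ is covered exactly once. If $\bar w\neq 0$ then $w$ is not in the $\dS_0$ member, and it remains to examine $\dS_\cA$. Since $\cA$ is a spread of $\F_q^4$, $\bar w$ lies in a unique $X\in\cA$; the nonzero vectors of $X$ being $\{\mu v_i : \mu\in\F_q^{*},\ 1\le i\le q+1\}$, we obtain a \emph{unique} pair $(i,\lambda)$ with $\bar w=\lambda v_i$. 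From the form of the nonzero vectors of $S_{X,Z}$, having $w\in S_{X,Z}$ means $w=\mu\binom{v_j}{z_j}$; the top block $\mu v_j=\lambda v_i$ with the $v$'s pairwise independent forces $j=i$ and $\mu=\lambda$, and the bottom block then gives the single condition $z_i=\lambda^{-1}\hat w$. By Property~3 of the extension code, as $Z$ ranges over $C$ its $i$-th columns run through all $q^2$ vectors of $\F_q^2$ exactly once, so precisely one $Z\in C$ meets this condition. Hence $w$ lies in exactly one member of $\dS_\cA$, completing the verification.

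The step needing the most care is the column-index bookkeeping in the last paragraph: one must confirm that the index $i$ at which Property~3 is invoked is the \emph{same} index that locates $\bar w$ among the columns of $X$. This alignment is exactly what the shared recurrences $v_i=\alpha^{i-3}v_1+v_2$ and $z_i=\alpha^{i-3}z_1+z_2$ guarantee, so that stacking preserves the column order and the $i$-th column of $S_{X,Z}$ is genuinely $\binom{v_i}{z_i}$. Everything else reduces to the fact that $\cA$ is a spread of $\F_q^4$ together with the exhaustiveness of Property~3 of the extension code; no delicate estimates are involved.
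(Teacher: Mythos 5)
Your proof is correct and follows essentially the same route as the paper's: both arguments reduce the claim to the fact that $\cA$ is a spread of $\F_q^4$ (so the length-$4$ prefix of a vector determines the unique candidate $X\in\cA$) together with Property~3 of the extension code (the $i$-th columns of the matrices of $C$ exhaust $\F_q^2$). The only cosmetic difference is that you verify the ``exactly one'' property directly, whereas the paper proves only that no nonzero vector is covered twice and then invokes the count $|\dS_0\cup\dS_\cA|=q^4+q^2+1$ to conclude coverage.
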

\begin{proof}
The set $\cA$ is a spread in $\F_q^4$ by definition.
The extension based on $C$ is a 2-subspace in $\F_q^6$.
A spread in $\F_q^6$ contains $\frac{q^6-1}{q^2-1} =q^4+q^2+1$ disjoint 2-subspaces.
$\dS_0$ has one 2-subspace and $\dS_\cA$ contains $q^4+q^2$ 2-subspaces. Hence, to complete
the proof it is sufficient to prove that no nonzero vector of $\F_q^6$ appears more than once in a subspaces
of $\dS_0 \cup \dS_\cA$. Assume a vector $v \in \F_q^6$ appears in two such subspaces. Let $v' \in \F_q^4$
be the prefix vector of length 4 obtained from $v$. By the definition of $\cA$ we have that $v'$ is either
the all-zero vector or it is contained in a unique 2-subspace of $\cA$. If $v'$ is the all-zero vector
then $v$ is contained only in the unique subspace of $\dS_0$. If $v'$ is contained in a unique 2-subspace $X$
of $\cA$, then by the definition of the extension code $C$,
each one of the $q^2$ extensions of $X$ with the extension code $C$ appends a different suffix
of length two to $v'$ and hence $v$ cannot appear more than once.
\end{proof}

Table~\ref{tab:S_A0} presents the 21 2-subspaces of $\dS_0 \cup \dS_\cA$ for $q=2$. The
first four rows in the matrix representation is a 2-subspace of $\dS_\cA$ and the last two rows
are taken from the extension code.

\begin{table}[h]
\centering \caption{the 2-subspaces of $\dS_0 \cup \dS_\cA$ for $q=2$}
\label{tab:S_A0}
\begin{small}
\begin{tabular}{|c||c|c|c|c|c||c|c|c|c|c||c|c|c|c|c||c|c|c|c|c|}
\hline
\hspace{-0.4cm} $\begin{array}{c}
000 \\
000 \\
000 \\
000 \\
011 \\
101
\end{array}$ \hspace{-0.4cm} & \hspace{-0.4cm} $\begin{array}{c}
000 \\
011 \\
011 \\
101 \\
000 \\
000
\end{array}$ \hspace{-0.4cm} & \hspace{-0.4cm} $\begin{array}{c}
011 \\
011 \\
101 \\
000 \\
000 \\
000
\end{array}$ \hspace{-0.4cm} & \hspace{-0.4cm} $\begin{array}{c}
011 \\
101 \\
011 \\
011 \\
000 \\
000
\end{array}$ \hspace{-0.4cm} & \hspace{-0.4cm} $\begin{array}{c}
011 \\
101 \\
000 \\
101 \\
000 \\
000
\end{array}$ \hspace{-0.4cm} & \hspace{-0.4cm} $\begin{array}{c}
011 \\
000 \\
101 \\
110 \\
000 \\
000
\end{array}$ \hspace{-0.4cm} &
\hspace{-0.4cm} $\begin{array}{c}
000 \\
011 \\
011 \\
101 \\
011 \\
110
\end{array}$ \hspace{-0.4cm} & \hspace{-0.4cm} $\begin{array}{c}
011 \\
011 \\
101 \\
000 \\
011 \\
110
\end{array}$ \hspace{-0.4cm} & \hspace{-0.4cm} $\begin{array}{c}
011 \\
101 \\
011 \\
011 \\
011 \\
110
\end{array}$ \hspace{-0.4cm} & \hspace{-0.4cm} $\begin{array}{c}
011 \\
101 \\
000 \\
101 \\
011 \\
110
\end{array}$ \hspace{-0.4cm} & \hspace{-0.4cm} $\begin{array}{c}
011 \\
000 \\
101 \\
110 \\
011 \\
110
\end{array}$ \hspace{-0.4cm} &
\hspace{-0.4cm} $\begin{array}{c}
000 \\
011 \\
011 \\
101 \\
101 \\
011
\end{array}$ \hspace{-0.4cm} & \hspace{-0.4cm} $\begin{array}{c}
011 \\
011 \\
101 \\
000 \\
101 \\
011
\end{array}$ \hspace{-0.4cm} & \hspace{-0.4cm} $\begin{array}{c}
011 \\
101 \\
011 \\
011 \\
101 \\
011
\end{array}$ \hspace{-0.4cm} & \hspace{-0.4cm} $\begin{array}{c}
011 \\
101 \\
000 \\
101 \\
101 \\
011
\end{array}$ \hspace{-0.4cm} & \hspace{-0.4cm} $\begin{array}{c}
011 \\
000 \\
101 \\
110 \\
101 \\
011
\end{array}$ \hspace{-0.4cm} &
\hspace{-0.4cm} $\begin{array}{c}
000 \\
011 \\
011 \\
101 \\
110 \\
101
\end{array}$ \hspace{-0.4cm} & \hspace{-0.4cm} $\begin{array}{c}
011 \\
011 \\
101 \\
000 \\
110 \\
101
\end{array}$ \hspace{-0.4cm} & \hspace{-0.4cm} $\begin{array}{c}
011 \\
101 \\
011 \\
011 \\
110 \\
101
\end{array}$ \hspace{-0.4cm} & \hspace{-0.4cm} $\begin{array}{c}
011 \\
101 \\
000 \\
101 \\
110 \\
101
\end{array}$ \hspace{-0.4cm} & \hspace{-0.4cm} $\begin{array}{c}
011 \\
000 \\
101 \\
110 \\
110 \\
101
\end{array}$ \hspace{-0.4cm} \\
\hline
\end{tabular}
\end{small}
\end{table}

\noindent
{\bf Extension of Type B:}

\vspace{0.05cm}

The set $\cB$ of 2-subspaces of $\F_q^4$ contains $q$ spreads with a total of $q(q^2+1)$ subspaces.
Each one is extended in all possible $q^2$ distinct ways to a 2-subspace in $\F_q^5$.
Each such 2-subspace of $\F_q^5$ is extended in a unique way to a 3-subspace in $\F_q^6$.
The result is a set with $q^3(q^2+1)$ distinct 3-subspaces of $\F_q^6$.
This set will be denoted by $\dS_\cB$.
Table~\ref{tab:S_B} presents the forty 3-subspaces of $\dS_\cB$ for $q=2$. Note that the third vector
in all the subspaces is the same.

\begin{table}[h]
\centering \caption{A basis for each one of the forty 3-subspaces of $\dS_\cB$ for $q=2$}
\label{tab:S_B}
\begin{small}
\begin{tabular}{|c|c|c|c||c|c|c|c||c|c|c|c||c|c|c|c||c|c|c|c|}
\hline
\hspace{-0.4cm} $\begin{array}{c}
000 \\
010 \\
100 \\
000 \\
000 \\
001
\end{array}$ \hspace{-0.4cm} & \hspace{-0.4cm} $\begin{array}{c}
000 \\
010 \\
100 \\
000 \\
010 \\
001
\end{array}$ \hspace{-0.4cm} & \hspace{-0.4cm} $\begin{array}{c}
000 \\
010 \\
100 \\
000 \\
100 \\
001
\end{array}$ \hspace{-0.4cm} & \hspace{-0.4cm} $\begin{array}{c}
000 \\
010 \\
100 \\
000 \\
110 \\
001
\end{array}$ \hspace{-0.4cm} &
\hspace{-0.4cm} $\begin{array}{c}
010 \\
000 \\
100 \\
100 \\
000 \\
001
\end{array}$ \hspace{-0.4cm} & \hspace{-0.4cm} $\begin{array}{c}
010 \\
000 \\
100 \\
100 \\
010 \\
001
\end{array}$ \hspace{-0.4cm} & \hspace{-0.4cm} $\begin{array}{c}
010 \\
000 \\
100 \\
100 \\
100 \\
001
\end{array}$ \hspace{-0.4cm} & \hspace{-0.4cm} $\begin{array}{c}
010 \\
000 \\
100 \\
100 \\
110 \\
001
\end{array}$ \hspace{-0.4cm} &
\hspace{-0.4cm} $\begin{array}{c}
010 \\
100 \\
010 \\
100 \\
000 \\
001
\end{array}$ \hspace{-0.4cm} & \hspace{-0.4cm} $\begin{array}{c}
010 \\
100 \\
010 \\
100 \\
010 \\
001
\end{array}$ \hspace{-0.4cm} & \hspace{-0.4cm} $\begin{array}{c}
010 \\
100 \\
010 \\
100 \\
100 \\
001
\end{array}$ \hspace{-0.4cm} & \hspace{-0.4cm} $\begin{array}{c}
010 \\
100 \\
010 \\
100 \\
110 \\
001
\end{array}$ \hspace{-0.4cm} & \hspace{-0.4cm} $\begin{array}{c}
010 \\
100 \\
100 \\
110 \\
000 \\
001
\end{array}$ \hspace{-0.4cm} & \hspace{-0.4cm} $\begin{array}{c}
010 \\
100 \\
100 \\
110 \\
010 \\
001
\end{array}$ \hspace{-0.4cm} & \hspace{-0.4cm} $\begin{array}{c}
010 \\
100 \\
100 \\
110 \\
100 \\
001
\end{array}$ \hspace{-0.4cm} & \hspace{-0.4cm} $\begin{array}{c}
010 \\
100 \\
100 \\
110 \\
110 \\
001
\end{array}$ \hspace{-0.4cm} & \hspace{-0.4cm} $\begin{array}{c}
010 \\
010 \\
000 \\
100 \\
000 \\
001
\end{array}$ \hspace{-0.4cm} & \hspace{-0.4cm} $\begin{array}{c}
010 \\
010 \\
000 \\
100 \\
010 \\
001
\end{array}$ \hspace{-0.4cm} & \hspace{-0.4cm} $\begin{array}{c}
010 \\
010 \\
000 \\
100 \\
100 \\
001
\end{array}$ \hspace{-0.4cm} & \hspace{-0.4cm} $\begin{array}{c}
010 \\
010 \\
000 \\
100 \\
110 \\
001
\end{array}$ \hspace{-0.4cm} \\
\hline
\hspace{-0.4cm} $\begin{array}{c}
010 \\
100 \\
000 \\
000 \\
000 \\
001
\end{array}$ \hspace{-0.4cm} & \hspace{-0.4cm} $\begin{array}{c}
010 \\
100 \\
000 \\
000 \\
010 \\
001
\end{array}$ \hspace{-0.4cm} & \hspace{-0.4cm} $\begin{array}{c}
010 \\
100 \\
000 \\
000 \\
100 \\
001
\end{array}$ \hspace{-0.4cm} & \hspace{-0.4cm} $\begin{array}{c}
010 \\
100 \\
000 \\
000 \\
110 \\
001
\end{array}$ \hspace{-0.4cm} & \hspace{-0.4cm} $\begin{array}{c}
000 \\
010 \\
100 \\
110 \\
000 \\
001
\end{array}$ \hspace{-0.4cm} & \hspace{-0.4cm} $\begin{array}{c}
000 \\
010 \\
100 \\
110 \\
010 \\
001
\end{array}$ \hspace{-0.4cm} & \hspace{-0.4cm} $\begin{array}{c}
000 \\
010 \\
100 \\
110 \\
100 \\
001
\end{array}$ \hspace{-0.4cm} & \hspace{-0.4cm} $\begin{array}{c}
000 \\
010 \\
100 \\
110 \\
110 \\
001
\end{array}$ \hspace{-0.4cm} & \hspace{-0.4cm} $\begin{array}{c}
010 \\
100 \\
110 \\
100 \\
000 \\
001
\end{array}$ \hspace{-0.4cm} & \hspace{-0.4cm} $\begin{array}{c}
010 \\
100 \\
110 \\
100 \\
010 \\
001
\end{array}$ \hspace{-0.4cm} & \hspace{-0.4cm} $\begin{array}{c}
010 \\
100 \\
110 \\
100 \\
100 \\
001
\end{array}$ \hspace{-0.4cm} & \hspace{-0.4cm} $\begin{array}{c}
010 \\
100 \\
110 \\
100 \\
110 \\
001
\end{array}$ \hspace{-0.4cm} & \hspace{-0.4cm} $\begin{array}{c}
010 \\
010 \\
010 \\
100 \\
000 \\
001
\end{array}$ \hspace{-0.4cm} & \hspace{-0.4cm} $\begin{array}{c}
010 \\
010 \\
010 \\
100 \\
010 \\
001
\end{array}$ \hspace{-0.4cm} & \hspace{-0.4cm} $\begin{array}{c}
010 \\
010 \\
010 \\
100 \\
100 \\
001
\end{array}$ \hspace{-0.4cm} & \hspace{-0.4cm} $\begin{array}{c}
010 \\
010 \\
010 \\
100 \\
110 \\
001
\end{array}$ \hspace{-0.4cm} & \hspace{-0.4cm} $\begin{array}{c}
010 \\
000 \\
101 \\
010 \\
000 \\
001
\end{array}$ \hspace{-0.4cm} & \hspace{-0.4cm} $\begin{array}{c}
010 \\
000 \\
101 \\
010 \\
010 \\
001
\end{array}$ \hspace{-0.4cm} & \hspace{-0.4cm} $\begin{array}{c}
010 \\
000 \\
101 \\
010 \\
100 \\
001
\end{array}$ \hspace{-0.4cm} & \hspace{-0.4cm} $\begin{array}{c}
010 \\
000 \\
101 \\
010 \\
110 \\
001
\end{array}$ \hspace{-0.4cm} \\
\hline
\end{tabular}
\end{small}
\end{table}

\noindent
{\bf Extension of Type C:}

\vspace{0.1cm}

The set $\cC$ of 2-subspaces of $\F_q^4$ contains $q^2$ spreads, each one
has $q^2+1$ subspaces. We further partition $\cC$ into $q$ subsets $\cC_\xi$, $\xi \in \F_q$,
where $\cC_\xi$ contains $q$ spreads.

For each $\xi$, $\xi \in \F_q$, the set $\cC_\xi$ of 2-subspaces of $\F_q^4$ contains
$q(q^2+1)$ subspaces. Each one is extended in a unique way to a 3-subspace in $\F_q^5$.
Each such 3-subspace of $\F_q^5$ has $q^3$ extensions to a 3-subspace in $\F_q^6$. Let $\dS_{\cC_\xi}$
be the set of these 3-subspaces which have $\xi$ in the 6-th row of the
$(q+2)$-th column of the matrix representation. This set $\dS_{\cC_\xi}$
contains $q^3 (q^2+1)$ distinct 3-subspaces of $\F_q^6$ since there are $q^2$ distinct
ways to choose the pair of symbols in the sixth row for the first two linearly
independent vectors of the 3-subspace. If $\dS_\cC \deff \cup_{\xi \in \F_q} \dS_{\cC_\xi}$
then clearly $\dS_\cC$ contains $q^4 (q^2+1)$ distinct 3-subspaces.

Table~\ref{tab:S_C} presents the eighty 3-subspaces of $\dS_\cC$ for $q=2$, where the first
two spreads in Table~\ref{tab:partition} are taken as $\cC_0$ and the other two spreads
form $\cC_1$. Note, that the third vector in the basis of the subspaces from $\dS_{\cC_0}$ and
the one from $\dS_{\cC_1}$ differ exactly in the last entry.

\begin{table}[h]
\centering \caption{A basis for each one of the eighty 3-subspaces of $\dS_\cC$ for $q=2$}
\label{tab:S_C}
\begin{small}
\begin{tabular}{|c||c|c|c|c||c|c|c|c||c|c|c|c||c|c|c|c||c|c|c|c||c|c|c|c|}
\hline
$\dS_{\cC_0}$ & \hspace{-0.4cm} $\begin{array}{c}
010 \\
100 \\
110 \\
000 \\
001 \\
000
\end{array}$ \hspace{-0.4cm} & \hspace{-0.4cm} $\begin{array}{c}
010 \\
100 \\
110 \\
000 \\
001 \\
010
\end{array}$ \hspace{-0.4cm} & \hspace{-0.4cm} $\begin{array}{c}
010 \\
100 \\
110 \\
000 \\
001 \\
100
\end{array}$ \hspace{-0.4cm} & \hspace{-0.4cm} $\begin{array}{c}
010 \\
100 \\
110 \\
000 \\
001 \\
110
\end{array}$ \hspace{-0.4cm} & \hspace{-0.4cm} $\begin{array}{c}
010 \\
100 \\
010 \\
110 \\
001 \\
000
\end{array}$ \hspace{-0.4cm} & \hspace{-0.4cm} $\begin{array}{c}
010 \\
100 \\
010 \\
110 \\
001 \\
010
\end{array}$ \hspace{-0.4cm} & \hspace{-0.4cm} $\begin{array}{c}
010 \\
100 \\
010 \\
110 \\
001 \\
100
\end{array}$ \hspace{-0.4cm} & \hspace{-0.4cm} $\begin{array}{c}
010 \\
100 \\
010 \\
110 \\
001 \\
110
\end{array}$ \hspace{-0.4cm} & \hspace{-0.4cm} $\begin{array}{c}
010 \\
010 \\
100 \\
010 \\
001 \\
000
\end{array}$ \hspace{-0.4cm} & \hspace{-0.4cm} $\begin{array}{c}
010 \\
010 \\
100 \\
010 \\
001 \\
010
\end{array}$ \hspace{-0.4cm} & \hspace{-0.4cm} $\begin{array}{c}
010 \\
010 \\
100 \\
010 \\
001 \\
100
\end{array}$ \hspace{-0.4cm} & \hspace{-0.4cm} $\begin{array}{c}
010 \\
010 \\
100 \\
010 \\
001 \\
110
\end{array}$ \hspace{-0.4cm} & \hspace{-0.4cm} $\begin{array}{c}
010 \\
000 \\
000 \\
100 \\
001 \\
000
\end{array}$ \hspace{-0.4cm} & \hspace{-0.4cm} $\begin{array}{c}
010 \\
000 \\
000 \\
100 \\
001 \\
010
\end{array}$ \hspace{-0.4cm} & \hspace{-0.4cm} $\begin{array}{c}
010 \\
000 \\
000 \\
100 \\
001 \\
100
\end{array}$ \hspace{-0.4cm} & \hspace{-0.4cm} $\begin{array}{c}
010 \\
000 \\
000 \\
100 \\
001 \\
110
\end{array}$ \hspace{-0.4cm} & \hspace{-0.4cm} $\begin{array}{c}
000 \\
010 \\
100 \\
100 \\
001 \\
000
\end{array}$ \hspace{-0.4cm} & \hspace{-0.4cm} $\begin{array}{c}
000 \\
010 \\
100 \\
100 \\
001 \\
010
\end{array}$ \hspace{-0.4cm} & \hspace{-0.4cm} $\begin{array}{c}
000 \\
010 \\
100 \\
100 \\
001 \\
100
\end{array}$ \hspace{-0.4cm} & \hspace{-0.4cm} $\begin{array}{c}
000 \\
010 \\
100 \\
100 \\
001 \\
110
\end{array}$ \hspace{-0.4cm} \\
\hline
$\dS_{\cC_0}$ &
\hspace{-0.4cm} $\begin{array}{c}
010 \\
100 \\
100 \\
000 \\
001 \\
000
\end{array}$ \hspace{-0.4cm} & \hspace{-0.4cm} $\begin{array}{c}
010 \\
100 \\
100 \\
000 \\
001 \\
010
\end{array}$ \hspace{-0.4cm} & \hspace{-0.4cm} $\begin{array}{c}
010 \\
100 \\
100 \\
000 \\
001 \\
100
\end{array}$ \hspace{-0.4cm} & \hspace{-0.4cm} $\begin{array}{c}
010 \\
100 \\
100 \\
000 \\
001 \\
110
\end{array}$ \hspace{-0.4cm} & \hspace{-0.4cm} $\begin{array}{c}
010 \\
010 \\
100 \\
100 \\
001 \\
000
\end{array}$ \hspace{-0.4cm} & \hspace{-0.4cm} $\begin{array}{c}
010 \\
010 \\
100 \\
100 \\
001 \\
010
\end{array}$ \hspace{-0.4cm} & \hspace{-0.4cm} $\begin{array}{c}
010 \\
010 \\
100 \\
100 \\
001 \\
100
\end{array}$ \hspace{-0.4cm} & \hspace{-0.4cm} $\begin{array}{c}
010 \\
010 \\
100 \\
100 \\
001 \\
110
\end{array}$ \hspace{-0.4cm} & \hspace{-0.4cm} $\begin{array}{c}
010 \\
000 \\
010 \\
100 \\
001 \\
000
\end{array}$ \hspace{-0.4cm} & \hspace{-0.4cm} $\begin{array}{c}
010 \\
000 \\
010 \\
100 \\
001 \\
010
\end{array}$ \hspace{-0.4cm} & \hspace{-0.4cm} $\begin{array}{c}
010 \\
000 \\
010 \\
100 \\
001 \\
100
\end{array}$ \hspace{-0.4cm} & \hspace{-0.4cm} $\begin{array}{c}
010 \\
000 \\
010 \\
100 \\
001 \\
110
\end{array}$ \hspace{-0.4cm} & \hspace{-0.4cm} $\begin{array}{c}
000 \\
010 \\
100 \\
010 \\
001 \\
000
\end{array}$ \hspace{-0.4cm} & \hspace{-0.4cm} $\begin{array}{c}
000 \\
010 \\
100 \\
010 \\
001 \\
010
\end{array}$ \hspace{-0.4cm} & \hspace{-0.4cm} $\begin{array}{c}
000 \\
010 \\
100 \\
010 \\
001 \\
100
\end{array}$ \hspace{-0.4cm} & \hspace{-0.4cm} $\begin{array}{c}
000 \\
010 \\
100 \\
010 \\
001 \\
110
\end{array}$ \hspace{-0.4cm} & \hspace{-0.4cm} $\begin{array}{c}
010 \\
100 \\
000 \\
010 \\
001 \\
000
\end{array}$ \hspace{-0.4cm} & \hspace{-0.4cm} $\begin{array}{c}
010 \\
100 \\
000 \\
010 \\
001 \\
010
\end{array}$ \hspace{-0.4cm} & \hspace{-0.4cm} $\begin{array}{c}
010 \\
100 \\
000 \\
010 \\
001 \\
100
\end{array}$ \hspace{-0.4cm} & \hspace{-0.4cm} $\begin{array}{c}
010 \\
100 \\
000 \\
010 \\
001 \\
110
\end{array}$ \hspace{-0.4cm}  \\
\hline
$\dS_{\cC_1}$ & \hspace{-0.4cm} $\begin{array}{c}
010 \\
100 \\
110 \\
010 \\
001 \\
001
\end{array}$ \hspace{-0.4cm} & \hspace{-0.4cm} $\begin{array}{c}
010 \\
100 \\
110 \\
010 \\
001 \\
011
\end{array}$ \hspace{-0.4cm} & \hspace{-0.4cm} $\begin{array}{c}
010 \\
100 \\
110 \\
010 \\
001 \\
101
\end{array}$ \hspace{-0.4cm} & \hspace{-0.4cm} $\begin{array}{c}
010 \\
100 \\
110 \\
010 \\
001 \\
111
\end{array}$ \hspace{-0.4cm} & \hspace{-0.4cm} $\begin{array}{c}
010 \\
100 \\
000 \\
110 \\
001 \\
001
\end{array}$ \hspace{-0.4cm} & \hspace{-0.4cm} $\begin{array}{c}
010 \\
100 \\
000 \\
110 \\
001 \\
011
\end{array}$ \hspace{-0.4cm} & \hspace{-0.4cm} $\begin{array}{c}
010 \\
100 \\
000 \\
110 \\
001 \\
101
\end{array}$ \hspace{-0.4cm} & \hspace{-0.4cm} $\begin{array}{c}
010 \\
100 \\
000 \\
110 \\
001 \\
111
\end{array}$ \hspace{-0.4cm} & \hspace{-0.4cm} $\begin{array}{c}
010 \\
100 \\
010 \\
000 \\
001 \\
001
\end{array}$ \hspace{-0.4cm} & \hspace{-0.4cm} $\begin{array}{c}
010 \\
100 \\
010 \\
000 \\
001 \\
011
\end{array}$ \hspace{-0.4cm} & \hspace{-0.4cm} $\begin{array}{c}
010 \\
100 \\
010 \\
000 \\
001 \\
101
\end{array}$ \hspace{-0.4cm} & \hspace{-0.4cm} $\begin{array}{c}
010 \\
100 \\
010 \\
000 \\
001 \\
111
\end{array}$ \hspace{-0.4cm} & \hspace{-0.4cm} $\begin{array}{c}
010 \\
100 \\
100 \\
100 \\
001 \\
001
\end{array}$ \hspace{-0.4cm} & \hspace{-0.4cm} $\begin{array}{c}
010 \\
100 \\
100 \\
100 \\
001 \\
011
\end{array}$ \hspace{-0.4cm} & \hspace{-0.4cm} $\begin{array}{c}
010 \\
100 \\
100 \\
100 \\
001 \\
101
\end{array}$ \hspace{-0.4cm} & \hspace{-0.4cm} $\begin{array}{c}
010 \\
100 \\
100 \\
100 \\
001 \\
111
\end{array}$ \hspace{-0.4cm} & \hspace{-0.4cm} $\begin{array}{c}
000 \\
000 \\
010 \\
100 \\
001 \\
001 \\
\end{array}$ \hspace{-0.4cm} & \hspace{-0.4cm} $\begin{array}{c}
000 \\
000 \\
010 \\
100 \\
001 \\
011 \\
\end{array}$ \hspace{-0.4cm} & \hspace{-0.4cm} $\begin{array}{c}
000 \\
000 \\
010 \\
100 \\
001 \\
101 \\
\end{array}$ \hspace{-0.4cm} & \hspace{-0.4cm} $\begin{array}{c}
000 \\
000 \\
010 \\
100 \\
001 \\
111 \\
\end{array}$ \hspace{-0.4cm} \\
\hline
$\dS_{\cC_1}$ & \hspace{-0.4cm} $\begin{array}{c}
010 \\
010 \\
100 \\
110 \\
001 \\
001
\end{array}$ \hspace{-0.4cm} & \hspace{-0.4cm} $\begin{array}{c}
010 \\
010 \\
100 \\
110 \\
001 \\
011
\end{array}$ \hspace{-0.4cm} & \hspace{-0.4cm} $\begin{array}{c}
010 \\
010 \\
100 \\
110 \\
001 \\
101
\end{array}$ \hspace{-0.4cm} & \hspace{-0.4cm} $\begin{array}{c}
010 \\
010 \\
100 \\
110 \\
001 \\
111
\end{array}$ \hspace{-0.4cm} & \hspace{-0.4cm} $\begin{array}{c}
010 \\
100 \\
100 \\
010 \\
001 \\
001
\end{array}$ \hspace{-0.4cm} & \hspace{-0.4cm} $\begin{array}{c}
010 \\
100 \\
100 \\
010 \\
001 \\
011
\end{array}$ \hspace{-0.4cm} & \hspace{-0.4cm} $\begin{array}{c}
010 \\
100 \\
100 \\
010 \\
001 \\
101
\end{array}$ \hspace{-0.4cm} & \hspace{-0.4cm} $\begin{array}{c}
010 \\
100 \\
100 \\
010 \\
001 \\
111
\end{array}$ \hspace{-0.4cm} & \hspace{-0.4cm} $\begin{array}{c}
000 \\
010 \\
000 \\
100 \\
001 \\
001
\end{array}$ \hspace{-0.4cm} & \hspace{-0.4cm} $\begin{array}{c}
000 \\
010 \\
000 \\
100 \\
001 \\
011
\end{array}$ \hspace{-0.4cm} & \hspace{-0.4cm} $\begin{array}{c}
000 \\
010 \\
000 \\
100 \\
001 \\
101
\end{array}$ \hspace{-0.4cm} & \hspace{-0.4cm} $\begin{array}{c}
000 \\
010 \\
000 \\
100 \\
001 \\
111
\end{array}$ \hspace{-0.4cm} & \hspace{-0.4cm} $\begin{array}{c}
010 \\
000 \\
100 \\
000 \\
001 \\
001
\end{array}$ \hspace{-0.4cm} & \hspace{-0.4cm} $\begin{array}{c}
010 \\
000 \\
100 \\
000 \\
001 \\
011
\end{array}$ \hspace{-0.4cm} & \hspace{-0.4cm} $\begin{array}{c}
010 \\
000 \\
100 \\
000 \\
001 \\
101
\end{array}$ \hspace{-0.4cm} & \hspace{-0.4cm} $\begin{array}{c}
010 \\
000 \\
100 \\
000 \\
001 \\
111
\end{array}$ \hspace{-0.4cm} & \hspace{-0.4cm} $\begin{array}{c}
010 \\
100 \\
110 \\
110 \\
001 \\
001
\end{array}$ \hspace{-0.4cm} & \hspace{-0.4cm} $\begin{array}{c}
010 \\
100 \\
110 \\
110 \\
001 \\
011
\end{array}$ \hspace{-0.4cm} & \hspace{-0.4cm} $\begin{array}{c}
010 \\
100 \\
110 \\
110 \\
001 \\
101
\end{array}$ \hspace{-0.4cm} & \hspace{-0.4cm} $\begin{array}{c}
010 \\
100 \\
110 \\
110 \\
001 \\
001
\end{array}$ \hspace{-0.4cm} \\
\hline
\end{tabular}
\end{small}
\end{table}

\noindent
{\bf Extension of Type D:}

\vspace{0.1cm}

First, we partition the cosets of the extension code $C$ (all the extension space excluding~$C$) into $q+1$ parts,
$C_1 , C_2 , \ldots , C_{q+1}$, each one contains $q-1$ cosets with $q^2$ matrices, i.e.~$C_j$,
$1 \leq j \leq q+1$, contains $q^2(q-1)$ matrices.

The set $\cD$ of 3-subspaces of $\F_q^4$ has size $q^3 + q^2 +q +1$.
By Lemma~\ref{lem:representDbyA} each 3-subspace of $\cD$ contains a unique
2-subspace from $\cA$. By Lemma~\ref{lem:q+1} for a given such 2-subspace $X \in \cA$ there are $q+1$
different 3-subspaces of $\F_q^4$ which contain $X$ (expanded from $X$, the first
vector is defined by the lexicographic order). Let $Y_1 , Y_2 , \ldots , Y_{q+1}$
be the $q+1$ subspaces of $\cD$ which contain $X$, where $Y_j \deff E(X,u)$ for a column vector
$u \in \F_q^4$.

For any $j$, $1 \leq j \leq q+1$, we extend the 3-subspace $Y_j$ using $C_j$ as follows.
For each $2 \times (q+1)$ matrix $Z$ from the $q^2(q-1)$ matrices of $C_j$ and for each column vector $v$ of length 2
from $\F_q^2$ we form the expanded representation $E(Z,v)$. $Y_j$ is extended with $E(Z,v)$, i.e
the new 3-subspace is represented by a $6 \times (q^2+q+1)$ matrix whose first four rows
is the matrix representation of $Y_j$ and the last two rows are $E(Z,v)$.
The result is a set $\dS_{X,j}$ which contains $q^4 (q-1)$ distinct 3-subspaces ($q^2(q-1)$ matrices in $C_j$,
where each matrix is expanded with $q^2$ vectors of length 2).

The set of all 3-subspaces formed from the 2-subspace $X \in \cA$ will be denoted by $\dS_X$
and its size is $q^4 (q-1) (q+1)=q^4 (q^2-1)$. The set of all 3-subspaces formed from $\cD$
will be denoted by $\dS_\cD$ and its size is $q^4 (q^2-1)(q^2+1) = q^4 (q^4 -1)$ since the
size of $\cA$ (from which~$X$~was taken) is $q^2+1$.

Tables~\ref{tab:S_D1} and~\ref{tab:S_D2} present first forty eight 3-subspaces of $\dS_\cD$ for $q=2$, where $X$ is taken
as the first 2-subspace of $\cA$ in Table~\ref{tab:partition} and the cosets of the extension code are taken
from Table~\ref{tab:extension}. The 3-subspaces $Y_1=\cY_1$, $Y_2=\cY_2$,
and $Y_3=\cY_3$ are presented first with their basis as in Table~\ref{tab:3_xubspaces} and
after that with their matrix representation. Note, that the first four rows of the 3-subspaces in Table~\ref{tab:S_D2}
form the matrix representation of $Y_1$, $Y_2$, and $Y_3$.
The other 192 3-subspaces of~$\dS_\cD$ are presented in Tables~\ref{tab:S_D3},~\ref{tab:S_D4},~\ref{tab:S_D5}, and~\ref{tab:S_D6}.

\begin{table*}[ht]
\centering \caption{The three 3-subspaces of the set $\cD$ which contain the first 2-subspace of $\cA$}
\label{tab:S_D1}
\begin{small}
\begin{tabular}{|c|c|c||c|c|c|}
\hline
$\cY_1$ & $\cY_2$ & $\cY_3$ & $Y_1$ & $Y_2$ & $Y_3$  \\ \hline
\hspace{-0.4cm} $\begin{array}{c}
000 \\
010 \\
011 \\
100
\end{array}$ \hspace{-0.4cm} &
\hspace{-0.4cm} $\begin{array}{c}
001 \\
010 \\
010 \\
100
\end{array}$ \hspace{-0.4cm} &
\hspace{-0.4cm} $\begin{array}{c}
001 \\
010 \\
011 \\
100
\end{array}$ \hspace{-0.4cm} &
\hspace{-0.4cm} $\begin{array}{c}
0000000 \\
0110011 \\
0111100 \\
1010101
\end{array}$ \hspace{-0.4cm} &
\hspace{-0.4cm} $\begin{array}{c}
0001111 \\
0110011 \\
0110011 \\
1010101
\end{array}$ \hspace{-0.4cm} &
\hspace{-0.4cm} $\begin{array}{c}
0001111 \\
0110011 \\
0111100 \\
1010101
\end{array}$ \hspace{-0.4cm}
\\ \hline
\end{tabular}
\end{small}
\end{table*}

Let
$$
\dS \deff \dS_0 \cup \dS_\cA \cup \dS_\cB \cup \dS_\cC  \cup \dS_\cD ~.
$$

A simple algebraic computation leads to
\begin{lemma}
\label{lem:num_sub}
$$
| \dS | =|\dS_0| + |\dS_\cA| + |\dS_\cB| + |\dS_\cC | + |\dS_\cD| = \frac{\sbinomq{7}{2}}{\sbinomq{3}{2}}~.
$$
\end{lemma}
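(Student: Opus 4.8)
The plan is to substitute the five cardinalities already recorded in the constructions of Types A through D (together with the singleton $\dS_0$) and verify that their sum collapses to the claimed $q$-binomial quotient; the whole argument is an exercise in bookkeeping, so I would organize it to make the cancellations transparent.

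First I would collect the individual sizes established earlier: $|\dS_0| = 1$, $|\dS_\cA| = q^4 + q^2$, $|\dS_\cB| = q^3(q^2+1)$, $|\dS_\cC| = q^4(q^2+1)$, and $|\dS_\cD| = q^4(q^4-1)$. Writing each as a polynomial in $q$ gives $q^5 + q^3$ for $|\dS_\cB|$, $q^6 + q^4$ for $|\dS_\cC|$, and $q^8 - q^4$ for $|\dS_\cD|$. Adding all five, the three degree-four contributions $+q^4$ (from $\dS_\cA$), $+q^4$ (from $\dS_\cC$) and $-q^4$ (from $\dS_\cD$) combine to a single $q^4$, and one checks that each power in $\{0,2,3,4,5,6,8\}$ survives with coefficient one while $q^1$ and $q^7$ do not appear. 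Thus
$$
|\dS| = q^8 + q^6 + q^5 + q^4 + q^3 + q^2 + 1 .
$$

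Next I would evaluate the right-hand side. Since $\sbinomq{3}{2} = \frac{q^3-1}{q-1} = q^2+q+1$ and $\sbinomq{7}{2} = \frac{(q^7-1)(q^6-1)}{(q^2-1)(q-1)}$, dividing and cancelling the factor $q-1$ yields
$$
\frac{\sbinomq{7}{2}}{\sbinomq{3}{2}} = \frac{(q^7-1)(q^6-1)}{(q^2-1)(q^3-1)} .
$$
Using $\frac{q^6-1}{q^3-1} = q^3+1$, then splitting $q^2-1 = (q-1)(q+1)$ and applying $\frac{q^7-1}{q-1} = q^6+q^5+q^4+q^3+q^2+q+1$ together with $\frac{q^3+1}{q+1} = q^2-q+1$, the quotient becomes the product $(q^6+q^5+q^4+q^3+q^2+q+1)(q^2-q+1)$. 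Expanding this product, the $q^7$ and $q^1$ terms cancel and every remaining coefficient equals one, reproducing exactly the polynomial found for $|\dS|$ and closing the proof.

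There is no genuine mathematical obstacle here: the content of the lemma lies entirely in the earlier verification that each of the five families contributes the stated number of \emph{distinct} subspaces, which rests on the extension lemmas of Section~\ref{sec:extend} and the spread and partition properties of $\cA$, $\cB$, $\cC$, and $\cD$. Given those counts, the only point requiring care is the simplification of the $q$-binomial ratio; I would present both sides as explicit polynomials in $q$ so that the equality is checked coefficient by coefficient rather than through a chain of factorizations in which an arithmetic slip could hide.
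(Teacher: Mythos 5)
Your proposal is correct and matches the paper's intent exactly: the paper offers no written proof beyond the remark that ``a simple algebraic computation leads to'' the lemma, and your computation --- summing $1 + (q^4+q^2) + q^3(q^2+1) + q^4(q^2+1) + q^4(q^4-1)$ to obtain $q^8+q^6+q^5+q^4+q^3+q^2+1$ and showing the $q$-binomial quotient expands to the same polynomial --- is precisely that computation, carried out correctly.
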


By Lemma~\ref{lem:num_sub}, the number of subspaces in the sets $\dS$
is the same as the number of\linebreak 3-subspaces in a $q$-Fano plane. Recall, that by Lemma~\ref{lem:spread_A}
we have that $\dS_0 \cup \dS_\cA$ is a spread. Hence, to show that
$\dS \setminus (\dS_0 \cup \dS_\cA)$ is a residual $q$-Fano plane it is sufficient to prove that either each 2-subspace of $\F_q^6$
which is not contained in $\dS_0 \cup \dS_\cA$ is contained in at least $q^2$ subspaces of $\dS$, or
each 2-subspace of $\F_q^6$
which is not contained in $\dS_0 \cup \dS_\cA$ is contained in at most $q^2$ subspaces of~$\dS$.

\begin{table*}[!ht]
\centering \caption{Extensions of Type D with $Y_1=\cY_1$, $Y_2=\cY_2$, and $Y_3=\cY_3$}
\label{tab:S_D2}
\begin{small}
\begin{tabular}{|c||c|c|c|c|c|c|c|c|}
\hline
\hspace{-0.2cm} $Y_1, C_1$ \hspace{-0.2cm} & \hspace{-0.4cm} $\begin{array}{c}
0000000 \\
0110011 \\
0111100 \\
1010101 \\
0000000 \\
1100110
\end{array}$ \hspace{-0.4cm} &
\hspace{-0.4cm} $\begin{array}{c}
0000000 \\
0110011 \\
0111100 \\
1010101 \\
0000000 \\
1101001
\end{array}$ \hspace{-0.4cm} &
\hspace{-0.4cm} $\begin{array}{c}
0000000 \\
0110011 \\
0111100 \\
1010101 \\
0001111 \\
1100110
\end{array}$ \hspace{-0.4cm} &
\hspace{-0.4cm} $\begin{array}{c}
0000000 \\
0110011 \\
0111100 \\
1010101 \\
0001111 \\
1101001
\end{array}$ \hspace{-0.4cm} &
\hspace{-0.4cm} $\begin{array}{c}
0000000 \\
0110011 \\
0111100 \\
1010101 \\
0110011 \\
0000000
\end{array}$ \hspace{-0.4cm} &
\hspace{-0.4cm} $\begin{array}{c}
0000000 \\
0110011 \\
0111100 \\
1010101 \\
0110011 \\
0001111
\end{array}$ \hspace{-0.4cm} &
\hspace{-0.4cm} $\begin{array}{c}
0000000 \\
0110011 \\
0111100 \\
1010101 \\
0111100 \\
0000000
\end{array}$ \hspace{-0.4cm} &
\hspace{-0.4cm} $\begin{array}{c}
0000000 \\
0110011 \\
0111100 \\
1010101 \\
0111100 \\
0001111
\end{array}$ \hspace{-0.4cm}
\\ \hline
\hspace{-0.2cm} $Y_1, C_1$ \hspace{-0.2cm} &\hspace{-0.4cm} $\begin{array}{c}
0000000 \\
0110011 \\
0111100 \\
1010101 \\
1010101 \\
1010101
\end{array}$ \hspace{-0.4cm} &
\hspace{-0.4cm} $\begin{array}{c}
0000000 \\
0110011 \\
0111100 \\
1010101 \\
1010101 \\
1011010
\end{array}$ \hspace{-0.4cm} &
\hspace{-0.4cm} $\begin{array}{c}
0000000 \\
0110011 \\
0111100 \\
1010101 \\
1011010 \\
1010101
\end{array}$ \hspace{-0.4cm} &
\hspace{-0.4cm} $\begin{array}{c}
0000000 \\
0110011 \\
0111100 \\
1010101 \\
1011010 \\
1011010
\end{array}$ \hspace{-0.4cm} &
\hspace{-0.4cm} $\begin{array}{c}
0000000 \\
0110011 \\
0111100 \\
1010101 \\
1100110 \\
0110011
\end{array}$ \hspace{-0.4cm} &
\hspace{-0.4cm} $\begin{array}{c}
0000000 \\
0110011 \\
0111100 \\
1010101 \\
1100110 \\
0111100
\end{array}$ \hspace{-0.4cm} &
\hspace{-0.4cm} $\begin{array}{c}
0000000 \\
0110011 \\
0111100 \\
1010101 \\
1101001 \\
0110011
\end{array}$ \hspace{-0.4cm} &
\hspace{-0.4cm} $\begin{array}{c}
0000000 \\
0110011 \\
0111100 \\
1010101 \\
1101001 \\
0111100
\end{array}$ \hspace{-0.4cm}
\\ \hline
\hspace{-0.2cm} $Y_2, C_2$ \hspace{-0.2cm} &\hspace{-0.4cm} $\begin{array}{c}
0001111 \\
0110011 \\
0110011 \\
1010101 \\
1100110 \\
0000000
\end{array}$ \hspace{-0.4cm} &
\hspace{-0.4cm} $\begin{array}{c}
0001111 \\
0110011 \\
0110011 \\
1010101 \\
1100110 \\
0001111
\end{array}$ \hspace{-0.4cm} &
\hspace{-0.4cm} $\begin{array}{c}
0001111 \\
0110011 \\
0110011 \\
1010101 \\
1101001 \\
0000000
\end{array}$ \hspace{-0.4cm} &
\hspace{-0.4cm} $\begin{array}{c}
0001111 \\
0110011 \\
0110011 \\
1010101 \\
1101001 \\
0001111
\end{array}$ \hspace{-0.4cm} &
\hspace{-0.4cm} $\begin{array}{c}
0001111 \\
0110011 \\
0110011 \\
1010101 \\
1010101 \\
1100110
\end{array}$ \hspace{-0.4cm} &
\hspace{-0.4cm} $\begin{array}{c}
0001111 \\
0110011 \\
0110011 \\
1010101 \\
1010101 \\
1101001
\end{array}$ \hspace{-0.4cm} &
\hspace{-0.4cm} $\begin{array}{c}
0001111 \\
0110011 \\
0110011 \\
1010101 \\
1011010 \\
1100110
\end{array}$ \hspace{-0.4cm} &
\hspace{-0.4cm} $\begin{array}{c}
0001111 \\
0110011 \\
0110011 \\
1010101 \\
1011010 \\
1101001
\end{array}$ \hspace{-0.4cm}
\\ \hline
\hspace{-0.2cm} $Y_2, C_2$ \hspace{-0.2cm} &\hspace{-0.4cm} $\begin{array}{c}
0001111 \\
0110011 \\
0110011 \\
1010101 \\
0110011 \\
0110011
\end{array}$ \hspace{-0.4cm} &
\hspace{-0.4cm} $\begin{array}{c}
0001111 \\
0110011 \\
0110011 \\
1010101 \\
0110011 \\
0111100
\end{array}$ \hspace{-0.4cm} &
\hspace{-0.4cm} $\begin{array}{c}
0001111 \\
0110011 \\
0110011 \\
1010101 \\
0111100 \\
0110011
\end{array}$ \hspace{-0.4cm} &
\hspace{-0.4cm} $\begin{array}{c}
0001111 \\
0110011 \\
0110011 \\
1010101 \\
0111100 \\
0111100
\end{array}$ \hspace{-0.4cm} &
\hspace{-0.4cm} $\begin{array}{c}
0001111 \\
0110011 \\
0110011 \\
1010101 \\
0000000 \\
1010101
\end{array}$ \hspace{-0.4cm} &
\hspace{-0.4cm} $\begin{array}{c}
0001111 \\
0110011 \\
0110011 \\
1010101 \\
0000000 \\
1011010
\end{array}$ \hspace{-0.4cm} &
\hspace{-0.4cm} $\begin{array}{c}
0001111 \\
0110011 \\
0110011 \\
1010101 \\
0001111 \\
1010101
\end{array}$ \hspace{-0.4cm} &
\hspace{-0.4cm} $\begin{array}{c}
0001111 \\
0110011 \\
0110011 \\
1010101 \\
0001111 \\
1011010
\end{array}$ \hspace{-0.4cm}
\\ \hline
\hspace{-0.2cm} $Y_3, C_3$ \hspace{-0.2cm} &
\hspace{-0.4cm} $\begin{array}{c}
0001111 \\
0110011 \\
0111100 \\
1010101 \\
1100110 \\
1100110
\end{array}$ \hspace{-0.4cm} &
\hspace{-0.4cm} $\begin{array}{c}
0001111 \\
0110011 \\
0111100 \\
1010101 \\
1100110 \\
1101001
\end{array}$ \hspace{-0.4cm} &
\hspace{-0.4cm} $\begin{array}{c}
0001111 \\
0110011 \\
0111100 \\
1010101 \\
1101001 \\
1100110
\end{array}$ \hspace{-0.4cm} &
\hspace{-0.4cm} $\begin{array}{c}
0001111 \\
0110011 \\
0111100 \\
1010101 \\
1101001 \\
1101001
\end{array}$ \hspace{-0.4cm} &
\hspace{-0.4cm} $\begin{array}{c}
0001111 \\
0110011 \\
0111100 \\
1010101 \\
1010101 \\
0000000
\end{array}$ \hspace{-0.4cm} &
\hspace{-0.4cm} $\begin{array}{c}
0001111 \\
0110011 \\
0111100 \\
1010101 \\
1010101 \\
0001111
\end{array}$ \hspace{-0.4cm} &
\hspace{-0.4cm} $\begin{array}{c}
0001111 \\
0110011 \\
0111100 \\
1010101 \\
1011010 \\
0000000
\end{array}$ \hspace{-0.4cm} &
\hspace{-0.4cm} $\begin{array}{c}
0001111 \\
0110011 \\
0111100 \\
1010101 \\
1011010 \\
0001111
\end{array}$ \hspace{-0.4cm}
\\ \hline
\hspace{-0.2cm} $Y_3, C_3$ \hspace{-0.2cm} &
\hspace{-0.4cm} $\begin{array}{c}
0001111 \\
0110011 \\
0111100 \\
1010101 \\
0110011 \\
1010101
\end{array}$ \hspace{-0.4cm} &
\hspace{-0.4cm} $\begin{array}{c}
0001111 \\
0110011 \\
0111100 \\
1010101 \\
0110011 \\
1011010
\end{array}$ \hspace{-0.4cm} &
\hspace{-0.4cm} $\begin{array}{c}
0001111 \\
0110011 \\
0111100 \\
1010101 \\
0111100 \\
1010101
\end{array}$ \hspace{-0.4cm} &
\hspace{-0.4cm} $\begin{array}{c}
0001111 \\
0110011 \\
0111100 \\
1010101 \\
0111100 \\
1011010
\end{array}$ \hspace{-0.4cm} &
\hspace{-0.4cm} $\begin{array}{c}
0001111 \\
0110011 \\
0111100 \\
1010101 \\
0000000 \\
0110011
\end{array}$ \hspace{-0.4cm} &
\hspace{-0.4cm} $\begin{array}{c}
0001111 \\
0110011 \\
0111100 \\
1010101 \\
0000000 \\
0111100
\end{array}$ \hspace{-0.4cm} &
\hspace{-0.4cm} $\begin{array}{c}
0001111 \\
0110011 \\
0111100 \\
1010101 \\
0001111 \\
0110011
\end{array}$ \hspace{-0.4cm} &
\hspace{-0.4cm} $\begin{array}{c}
0001111 \\
0110011 \\
0111100 \\
1010101 \\
0001111 \\
0111100
\end{array}$ \hspace{-0.4cm}
\\ \hline
\end{tabular}
\end{small}
\end{table*}

\begin{lemma}
\label{lem:AinD}
Each 2-subspace of $\F_q^6$ which can be extended from a 2-subspace of $\cA$, but not extended to a 2-subspace of the spread
$\dS_0 \cup \dS_\cA$, is contained $q^2$ times in the 3-subspaces of~$\dS_\cD$.
\end{lemma}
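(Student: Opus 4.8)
The plan is to fix the 2-subspace $W$ in question, track it through the Type~D construction, and count exactly. Since $W$ is extended from a 2-subspace $X \in \cA$, its first four rows form the representation of $X$ and its last two rows form some $2 \times (q+1)$ matrix $Z_0$ of the extension space; the hypothesis that $W$ is \emph{not} extendable into the spread $\dS_0 \cup \dS_\cA$ means $Z_0 \notin C$, so $Z_0$ lies in exactly one of the parts $C_1,\ldots,C_{q+1}$, say $Z_0 \in C_{j_0}$. For a matrix $Z$ of the extension space write $W_Z$ for the 2-subspace of $\F_q^6$ whose first four rows are the representation of $X$ and whose last two rows are $Z$, so $W = W_{Z_0}$; let $\phi_Z \colon X \to \F_q^2$ be the linear map whose graph is $W_Z$. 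Projection $\pi$ onto the first four coordinates maps each such $W_Z$ isomorphically onto $X$, because $X$ is a genuine 2-subspace of $\F_q^4$.

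First I would reduce to the 3-subspaces of $\dS_\cD$ coming from this particular $X$. If $T \in \dS_\cD$ contains $W$, then $X = \pi(W) \subseteq \pi(T)$, where $\pi(T)$ is the 3-subspace $Y_j \in \cD$ from which $T$ was built. By Lemma~\ref{lem:representDbyA}, $\pi(T)$ contains a unique 2-subspace of $\cA$, and since $X \in \cA$ with $X \subseteq \pi(T)$, that 2-subspace is $X$; hence $T \in \dS_X$. So it suffices to count inside $\dS_X = \bigcup_{j=1}^{q+1} \dS_{X,j}$.

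The key observation is that stacking preserves the expansion recipe: if $T \in \dS_{X,j}$ is built from $Y_j = E(X,u_j)$ and a pair $(Z,v)$ with $Z \in C_j$ and $v \in \F_q^2$, then, since the scalars $\alpha^i$ used in $E$ are identical in each coordinate block, its $6 \times (q^2+q+1)$ matrix is $E(W_Z, w)$, where $w$ is the length-$6$ vector with first four entries $u_j$ and last two entries $v$; that is, $T = \Span{W_Z \cup \{w\}}$, a genuine 3-subspace because $\pi(w) = u_j \notin X$ forces $w \notin W_Z$. As $\pi(T) = \Span{X \cup \{u_j\}} = Y_j$ is 3-dimensional, $\pi$ restricts to an isomorphism $T \to Y_j$, so $T$ is the graph of a linear map $\Phi \colon Y_j \to \F_q^2$ with $\Phi(x) = \phi_Z(x)$ for $x \in X$ and $\Phi(u_j) = v$.

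The main step is then immediate in this language, and it is the point that needs the most care: $W = W_{Z_0} \subseteq T$ holds if and only if, for each $x \in X$, the unique vector of $T$ over $x$ has last two coordinates $\phi_{Z_0}(x)$, i.e. $\phi_Z(x) = \phi_{Z_0}(x)$ for all $x \in X$, i.e. $\phi_Z = \phi_{Z_0}$; since $Z$ is recovered from $\phi_Z$ through its first two columns and the extension-space recurrence, this is equivalent to $Z = Z_0$. Because $Z_0 \in C_{j_0}$ only, this forces $j = j_0$, while $v \in \F_q^2$ remains arbitrary, and distinct $v$ give distinct $T$ since $\Phi(u_{j_0}) = v$ recovers $v$ from $T$. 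Hence exactly the $q^2$ subspaces $E(W_{Z_0}, w)$ with $w$ having first four entries $u_{j_0}$ and last two entries $v \in \F_q^2$ contain $W$, which proves the claim. The whole argument hinges on the injectivity of $\pi$ on these 2- and 3-subspaces, which is what converts the containment $W \subseteq T$ into the pointwise equality $\phi_Z = \phi_{Z_0}$; this is precisely where the hypothesis $X \in \cA$ (so that $X$ projects isomorphically) is used.
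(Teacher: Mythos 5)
Your proof is correct and takes essentially the same route as the paper's: fix $X\in\cA$, read off the extension-space matrix $Z_0\notin C$ that determines $W$, locate it in its unique part $C_{j_0}$, and count the $q^2$ expansions $E(Z_0,v)$ of $Y_{j_0}$ as exactly the members of $\dS_\cD$ containing $W$. Your graph-of-a-linear-map formalism merely makes explicit the uniqueness direction (that no 3-subspace built from a matrix $Z\neq Z_0$, or from another $Y_j$, can contain $W$), which the paper leaves implicit behind its count of all possible extensions of a 2-subspace of $\cA$.
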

\begin{proof}
Since the 2-subspaces of $\cA$ are extended only to the spread of $\dS_0 \cup \dS_\cA$,
it follows that
any 2-subspaces of $\F_q^6$ which can be extended from a 2-subspace of $\cA$ was
formed by extending the 3-subspaces of $\dS_\cD$.
By Lemma~\ref{lem:representDbyA} each 2-subspace of $\cA$ is contained in $q+1$
distinct 3-subspaces of $\cD$.
By the definition for the representation of 3-subspaces, this 2-subspace appear in the first four rows
and the first $q+1$ column of the 3-subspace representation. Let $X \in \cA$ and let
$Y_1 , Y_2 , \ldots , Y_{q+1}$ be the $q+1$ subspaces of $\cD$ which contain $X$.
By the extensions of $\cD$, each matrix $Z$ of the extension space, which is not part
of the extension code, is used $q^2$ times to extend $X$, using the $2 \times (q^2+q+1)$
matrices $E(Z,u)$, where any column vector of length 2 over $\F_q$ is used once as $u$.
By lemma~\ref{lem:q4extensions}, these are all the possible extensions of 2-subspaces from $\cA$
(note, that the extensions of subspaces from $\cA$ with the extension code are exactly the 2-subspaces of $\dS_\cA$.).
\end{proof}

\begin{lemma}
\label{lem:BinB}
Each 2-subspaces of $\F_q^6$ extended from a 2-subspace of $\cB$ is contained exactly once
in the 3-subspaces of $\dS_\cB$.
\end{lemma}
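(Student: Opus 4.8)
The plan is to exploit the single structural feature that all $3$-subspaces of $\dS_\cB$ share: by the construction in Extension of Type B, each member of $\dS_\cB$ is the unique expansion of a $2$-subspace of $\F_q^6$ by the fixed vector $e_6$, the unit vector whose unique \emph{one} lies in the last coordinate. Concretely, every $W \in \dS_\cB$ has the form $W = E(\tilde{X}', e_6)$, where $\tilde{X}'$ is obtained by appending to some $X \in \cB$ a matrix of the extension space whose second row, i.e.\ the sixth row of the resulting $6 \times (q+1)$ matrix, is all-zero. As a subspace this means $W = \tilde{X}' + \Span{\{e_6\}}$, with every vector of $\tilde{X}'$ carrying a zero in its sixth coordinate while $e_6$ carries a \emph{one} there. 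I first record existence: a $2$-subspace $\tilde{X}$ of $\F_q^6$ extended from some $X \in \cB$ in this Type B manner has its sixth row all-zero and is, by construction, a $2$-subspace of $E(\tilde{X}, e_6) \in \dS_\cB$, so it is contained in at least one member of $\dS_\cB$.

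For uniqueness I would prove the local claim that for any $W = E(\tilde{X}', e_6) \in \dS_\cB$ the set of vectors of $W$ with vanishing sixth coordinate is exactly $\tilde{X}'$. This is immediate from $W = \tilde{X}' + \Span{\{e_6\}}$: a general vector $w' + c\, e_6$ with $w' \in \tilde{X}'$ and $c \in \F_q$ has sixth coordinate equal to $c$, so it lies in the sixth-coordinate-zero set precisely when $c = 0$, that is precisely when it lies in $\tilde{X}'$. Now suppose the given $\tilde{X}$ satisfies $\tilde{X} \subseteq W$ for some $W = E(\tilde{X}', e_6) \in \dS_\cB$. Since $\tilde{X}$ was extended from $\cB$ via Type B, all of its vectors have zero sixth coordinate, so $\tilde{X}$ lies inside the sixth-coordinate-zero set of $W$, which by the local claim equals $\tilde{X}'$. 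As $\tilde{X}$ and $\tilde{X}'$ are both $2$-dimensional, $\tilde{X} = \tilde{X}'$ and hence $W = E(\tilde{X}, e_6)$; this is exactly the $3$-subspace produced at the existence step, so the containing member of $\dS_\cB$ is unique.

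The argument is short precisely because the expansion vector $e_6$ is transverse to the flat sixth-coordinate-zero world in which the Type B base subspaces live. The only place I expect to need care is the local claim, where the transversality $e_6 \notin \tilde{X}'$ (equivalently, the sixth coordinate separating $\tilde{X}'$ from $e_6$) should be stated explicitly; beyond that I foresee no genuine obstacle, and the same observation incidentally shows that distinct Type B base subspaces yield distinct $3$-subspaces of $\dS_\cB$.
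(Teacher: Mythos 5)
Your central mechanism---every member of $\dS_\cB$ has the form $\tilde{X}'\oplus\Span{e_6}$ with $e_6$ transverse to the base---is sound, and the local claim together with the uniqueness argument built on it is correct as far as it goes. The problem is the scope of what you prove. You read ``a $2$-subspace of $\F_q^6$ extended from a $2$-subspace of $\cB$'' as meaning one of the intermediate subspaces $\tilde{X}$ produced inside the Type B process, i.e.\ an extension whose sixth row is all-zero; both your existence step and your uniqueness step rely on ``all of its vectors have zero sixth coordinate.'' But for each $X\in\cB$ there are $q^4$ extensions to a $2$-subspace of $\F_q^6$ (Lemma~\ref{lem:extend}), of which only $q^2$ have an all-zero sixth row, and the lemma is about all $q^4$ of them. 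This is forced by how the lemma is used: Lemma~\ref{lem:BCinD} gives each such extension multiplicity $q^2-1$ in $\dS_\cD$, and Lemma~\ref{lem:BinB} must supply the remaining $1$ to reach the target multiplicity $q^2$ for \emph{every} $2$-subspace of $\F_q^6$ lying over a member of $\cB$, not only the flat ones. The paper's own proof makes the intended scope explicit: each $3$-subspace $Z=E(Y,e_6)$ contains \emph{all} $q^2$ extensions to $\F_q^6$ of the intermediate $2$-subspace $Y\subset\F_q^5$, so the $q^2$ subspaces $Z$ attached to a given $X$ cover all $q^2\cdot q^2=q^4$ of its extensions.

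The gap is a missing case rather than a wrong approach, and it is repaired by your own idea stated in the right generality. For an arbitrary extension $W$ of $X\in\cB$ the correct transversality statement is $e_6\notin W$: this holds because $W$ projects isomorphically onto the $2$-dimensional $X$ in the first four coordinates, so no nonzero vector of $W$ has first four coordinates zero. Hence any $Z\in\dS_\cB$ containing $W$ must equal $W\oplus\Span{e_6}$, which gives uniqueness; and $W\oplus\Span{e_6}=E(\tilde{X}',e_6)\in\dS_\cB$, where $\tilde{X}'$ is obtained from $W$ by zeroing the sixth coordinate of every vector (its extension matrix is that of $W$ with the second row replaced by zeros, still a member of the extension space with all-zero second row), which gives existence. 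As written, however, your proof establishes the claim only for $q^2$ of the $q^4$ relevant $2$-subspaces per element of $\cB$.
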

\begin{proof}
Any 2-subspace $X$ of $\cB$ is first extended in all the $q^2$ possible distinct ways
to a 2-subspace of $\F_q^5$. Each 2-subspace $Y$ of these $q^2$ subspaces is extended in
a unique way to a 3-subspace $Z$. Such a 3-subspace $Z$ contains all the $q^2$ distinct 2-subspaces of $\F_q^6$,
extended from~$Y$.
\end{proof}

\begin{lemma}
\label{lem:CinC}
Each 2-subspaces of $\F_q^6$ extended from a 2-subspace of $\cC$ is contained exactly once
in the 3-subspaces of $\dS_\cC$.
\end{lemma}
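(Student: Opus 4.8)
The plan is to mimic the counting used for Extension of Type B in Lemma~\ref{lem:BinB}, but now the intermediate object is a $3$-subspace of $\F_q^5$ (rather than a $2$-subspace), and one must track the fixed symbol $\xi$ sitting in the sixth row of the $(q+2)$-th column. Throughout, let $\pi\colon \F_q^6 \to \F_q^4$ be the projection deleting the last two coordinates, so that ``a $2$-subspace of $\F_q^6$ extended from a $2$-subspace $X$ of $\cC$'' is exactly a $2$-subspace $W$ with $\pi(W)=X$. By Lemma~\ref{lem:extend} there are exactly $q^4$ such $W$ for each $X\in\cC$, in bijection with the $2\times(q+1)$ matrices of the extension space placed in the last two rows.

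First I would reduce to a single $X$. Fix $X\in\cC$ and let $\xi$ be the unique element of $\F_q$ with $X\in\cC_\xi$. If $W$ satisfies $\pi(W)=X$ and $W\subseteq T$ for some $T\in\dS_\cC$, then $\pi(W)\subseteq\pi(T)$; since every $T\in\dS_\cC$ has $\pi(T)\in\cC$ of dimension two, $\pi(T)=X$, and $T$ must be one of the $3$-subspaces of $\dS_{\cC_\xi}$ built from $X$. Thus it suffices to examine, for this fixed $X$, the $q^2$ subspaces $T$ built from $X$ (indexed by the pair $(a,b)\in\F_q^2$ giving the sixth-row values on the first two columns) together with the $2$-subspaces of each such $T$ that project onto $X$.

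Next I would describe these $T$ concretely. Writing a vector of $\F_q^6$ as $(\mathbf{x},y,z)$ with $\mathbf{x}\in\F_q^4$, each $T$ equals $\spn\{(v_1,0,a),(v_2,0,b),(0,1,\xi)\}$, where $v_1,v_2$ are the first two columns of the representation of $X$. The kernel of $\pi|_T$ is the line $\spn\{(0,0,0,0,1,\xi)\}$, so the $2$-subspaces $W\subseteq T$ with $\pi(W)=X$ are exactly those avoiding this line; there are $(q^2+q+1)-(q+1)=q^2$ of them, each the graph of a linear section $\mu\colon X\to\F_q$ recorded by $(p,r)=(\mu(v_1),\mu(v_2))\in\F_q^2$. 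This gives $q^2\cdot q^2=q^4$ incidence pairs $(T,W)$, matching the number of extensions of $X$; so the statement reduces to showing that $(T,W)\mapsto W$ hits each of the $q^4$ extensions of $X$ exactly once.

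The heart of the argument, and the step I expect to be the main obstacle, is the explicit identification of the extension-space matrix $Z_W$ occupying the last two rows of $W$: a short computation shows its first row follows the extension-space pattern generated by $(p,r)$ and its second row the pattern generated by $(a+p\xi,\,b+r\xi)$. Consequently, for fixed $\xi$, the assignment $(a,b,p,r)\mapsto Z_W$ is the linear map $(a,b,p,r)\mapsto(p,\,r,\,a+p\xi,\,b+r\xi)$, which is invertible (one recovers $p,r$ directly, then $a$ and $b$), hence a bijection from the $q^4$ pairs onto the $q^4$ matrices of the extension space, i.e.\ onto all $q^4$ extensions of $X$. Since a $2$-subspace $W$ with $\pi(W)=X$ can lie only in $3$-subspaces of $\dS_\cC$ built from $X$ (distinct $X$'s yield disjoint families because $\pi(T)$ recovers $X$), it follows that each such $W$ is contained in exactly one $3$-subspace of $\dS_\cC$, as claimed. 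The delicate point is precisely that the section parameter $(p,r)$ and the $3$-subspace parameter $(a,b)$ interact through $\xi$, so one must verify a genuine bijection rather than rely on the matching cardinalities alone.
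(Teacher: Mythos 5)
Your proof is correct and takes essentially the same route as the paper's: both reduce to a fixed $X\in\cC_\xi$, count the $q^2$ members of $\dS_{\cC_\xi}$ lying over $X$ together with the $q^2$ relevant $2$-subspaces inside each, and use the fixed symbol $\xi$ in the $(q+2)$-th column to conclude that these $q^4$ incidence pairs biject with the $q^4$ extensions of $X$. The difference is only presentational --- the paper factors through $\F_q^5$ and merely asserts the resulting distinctness, whereas you exhibit the invertible linear map $(a,b,p,r)\mapsto(p,\,r,\,a+p\xi,\,b+r\xi)$ explicitly, which is a welcome sharpening of the step the paper leaves implicit.
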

\begin{proof}
Any 2-subspace $X$ of $\cC$ is first extended in a unique way to a 3-subspace $Z$ of $\F_q^5$.
Such a 3-subspace $Z$ contains all the $q^2$ distinct 2-subspaces of $\F_q^5$, extended from $X$.
Each such 3-subspace $Y$ is extended to $q^2$ (out of the $q^3$)
distinct 3-subspaces of $\F_q^6$. All these $q^2$ distinct 3-subspaces have the same symbol in the last row of the
$(q+2)$-th column, in the matrix representation,
which implies that each distinct 2-subspace of $\F_q^5$ is extended in $q^2$ distinct ways
to all possible distinct 2-subspaces of $\F_q^6$ extended from $X$.
\end{proof}

For the next set of 2-subspaces we need one property of the extension space.
\begin{lemma}
\label{lem:diffExtent}
Let $C'$ be a coset of the extension code, let $M_1$ and $M_2$ two $2 \times (q+1)$
matrices of $C'$, and let $u_1$ and $u_2$ two column vectors of $\F_q^2$.
Let $\{ i_1, i_2,\ldots ,i_{q+1} \} \in \cP$, defined in Lemma~\ref{lem:exact2in3}, and let $X$ be a 3-subspace.
If $Y_1$ and $Y_2$ are extensions of $X$ with $E(M_1,u_1)$ and
$E(M_2,u_2)$, respectively, then columns $i_1, i_2,\ldots ,i_{q+1}$ of $Y_1$ and $Y_2$
define two different 2-subspaces of $\F_q^6$ unless $M_1=M_2$ and $\{ i_1, i_2,\ldots ,i_{q+1} \}=\{ 1,2,\ldots,q+1\}$
or ${M_1=M_2}$ and $u_1=u_2$.
\end{lemma}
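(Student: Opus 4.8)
The plan is to replace the column-by-column comparison by a comparison of two linear maps. Since $Y_1$ and $Y_2$ are $3$-subspaces of $\F_q^6$ whose first four rows both coincide with the fixed representation of $X$, deleting the last two coordinates maps each $Y_j$ isomorphically onto the $3$-subspace $X\subseteq\F_q^4$. Hence each $Y_j$ is the graph of a linear map $\Psi_j:X\to\F_q^2$, and reading off the columns shows $\Psi_j(v)=u_j$, where $v$ is the $(q+2)$-th column of $X$, and $\Psi_j(y_k)=m_k^{(j)}$, where $y_1,\dots,y_{q+1}$ are the first $q+1$ columns of $X$ and $m_k^{(j)}$ is the $k$-th column of $M_j$. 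By Lemma~\ref{lem:exact2in3} the index set $\{i_1,\dots,i_{q+1}\}$ selects exactly a $2$-subspace $W$ of $X$, and the columns $i_1,\dots,i_{q+1}$ of $Y_j$ span precisely the graph of $\Psi_j|_W$. Two graphs over the same domain $W$ are equal iff the maps agree, so the whole statement reduces to
\[
U_1=U_2 \iff W\subseteq\ker(\Psi_1-\Psi_2).
\]

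Put $\Delta\deff\Psi_1-\Psi_2$ and $D\deff M_1-M_2$; since $M_1,M_2$ lie in the same coset $C'$ of the extension code, $D$ belongs to the extension code $C$. On a spanning set of $X$ one has $\Delta(v)=u_1-u_2$ and $\Delta(y_k)=d_k$, the $k$-th column of $D$. I would first dispose of the case $M_1=M_2$, where $D=0$, so that $\Delta$ kills the base $2$-subspace $W_0\deff\Span{\{y_1,\dots,y_{q+1}\}}$ (the columns $1,\dots,q+1$). If moreover $u_1=u_2$ then $\Delta=0$ and $U_1=U_2$ for every $W$; if instead $u_1\neq u_2$ then $\Delta(v)\neq 0$, so $\ker\Delta=W_0$ and $U_1=U_2$ holds only for $W=W_0=\{1,\dots,q+1\}$. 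These are exactly the two exceptional cases in the statement.

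The case $M_1\neq M_2$ carries the real content and is the main obstacle. Now $D$ is a nonzero codeword, and I must force $\dim\ker\Delta\le 1$: because every $2$-subspace of $X$ lies in $\cP$, a $2$-dimensional kernel would itself be an admissible $W$ producing a forbidden coincidence, so nothing less than surjectivity of $\Delta$ will do. As $\Delta(v)=u_1-u_2$ may vanish, surjectivity must already be supplied by $d_1,\dots,d_{q+1}$; that is, I need the key property that every nonzero codeword of the extension code has rank $2$. I would prove this directly from the construction of the extension code: a nonzero $D=xG_1+yG_2$ (with $G_1,G_2$ the two basis matrices) has first two columns $(x,y)^{\top}$ and $(y,\,x+\beta y)^{\top}$, which are linearly dependent iff $x^2+\beta xy-y^2=0$, i.e. iff $x/y$ is a root of $\gamma^2+\beta\gamma-1$; and the value of $\beta$ selected in the extension-code lemma is precisely the one for which this quadratic has no root in $\F_q$. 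Consequently $d_1,d_2$ are independent, hence (via $d_k=\alpha^{k-3}d_1+d_2$) all $d_k$ are nonzero and $\{d_1,\dots,d_{q+1}\}$ spans $\F_q^2$, so $\Delta$ is onto, $\dim\ker\Delta=1$, and $U_1\neq U_2$ for every admissible $W$. Combining the three cases yields the stated equivalence.
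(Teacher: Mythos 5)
Your proposal is correct, but it takes a genuinely different route from the paper. The paper's proof is a column-by-column comparison: property 3 of the extension code guarantees that two distinct matrices $M_1\neq M_2$ in the same coset have distinct $i$-th columns for every $i\in\{1,\dots,q+1\}$, and Lemma~\ref{lem:2in3} guarantees that the selected position set $\{i_1,\dots,i_{q+1}\}$ always contains at least one of the first $q+1$ positions; since corresponding columns of $Y_1$ and $Y_2$ share their length-4 prefix, a disagreement at any selected position already forces the two 2-subspaces apart, and the case $M_1=M_2$, $u_1\neq u_2$ is handled by noting that all columns beyond the first $q+1$ then differ. You instead view each $Y_j$ as the graph of a linear map $\Psi_j\colon X\to\F_q^2$ and reduce the whole lemma to the clean equivalence $U_1=U_2\iff W\subseteq\ker(\Psi_1-\Psi_2)$; this buys a transparent explanation of exactly why the two exceptional cases are the only ones, at the cost of needing a slightly stronger input than the paper uses, namely that every nonzero codeword of the extension code has linearly independent first two columns (rank $2$), not merely nonzero columns. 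You establish that input from the explicit basis $G_1,G_2$ of the constructed code via the irreducibility of $\gamma^2+\beta\gamma-1$, which is fine for the code actually built in the paper; note, however, that the rank-$2$ property already follows from properties 1 and 3 of an arbitrary extension code (if $d_1\neq 0$ and $d_2=\gamma d_1$, then $d_i=(\alpha^{i-3}+\gamma)d_1$ vanishes for the unique $i$ with $\alpha^{i-3}=-\gamma$, contradicting property 3), and equivalently one can bypass rank $2$ altogether by observing, as the paper implicitly does, that any $2$-subspace $W$ of $X$ meets $\Span{\{y_1,\dots,y_{q+1}\}}$ nontrivially, so $W\subseteq\ker\Delta$ would force some $d_k=0$. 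Either patch makes your argument independent of the particular construction; as written it is sound.
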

\begin{proof}
By the definition of the extension code, the columns of $M_1$ and $M_2$ are distinct in pairs unless
$M_1=M_2$. Hence, by Lemma~\ref{lem:2in3} we infer the result in the case that ${M_1 \neq M_2}$.
If ${M_1 = M_2}$ then $u_1 \neq u_2$ implies that except for the first $q+1$ columns
all the columns of $E(M_1,u_1)$ and $E(M_2,u_2)$ are different in pairs and hence the result
follows from Lemma~\ref{lem:2in3}.
\end{proof}

Since each 2-subspace of either $\cB$ or $\cC$ is contained in $q^2-1$ 3-subspaces of $\cD$,
it follows as a consequence of Lemma~\ref{lem:diffExtent} that
\begin{lemma}
\label{lem:BCinD}
Each 2-subspaces of $\F_q^6$ extended from a 2-subspace of either $\cB$ or $\cC$ is contained exactly $q^2-1$
times in the 3-subspaces of $\dS_\cD$.
\end{lemma}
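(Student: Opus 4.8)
The plan is to fix a 2-subspace $\tilde W$ of $\F_q^6$ whose first four rows give a 2-subspace $W \in \cB \cup \cC$ (and whose last two rows form some matrix $Z$ of the extension space), and to count directly the 3-subspaces of $\dS_\cD$ that contain $\tilde W$. Since the first four rows of any member of $\dS_\cD$ reproduce the 3-subspace of $\cD$ from which it was extended, such a member can contain $\tilde W$ only if its underlying 3-subspace $D \in \cD$ contains $W$. By Lemma~\ref{lem:q+1}, $W$ lies in exactly $q+1$ 3-subspaces $D_1,\ldots,D_{q+1}$ of $\F_q^4$, all of which belong to $\cD$. So the total count splits into $q+1$ local counts, one per $D_m$, and these cannot overlap because distinct $D_m$ produce 3-subspaces with distinct first four rows.

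Next I would locate $W$ inside each $D_m$. By Lemma~\ref{lem:representDbyA} the first $q+1$ columns of the representation of $D_m$ carry the unique 2-subspace of $\cA$ contained in $D_m$; since $W \in \cB \cup \cC$ and the spreads $\cA,\cB,\cC$ are pairwise disjoint, $W$ is not that $\cA$-subspace. Hence by Lemma~\ref{lem:2in3} the columns carrying $W$ form a set $\sigma_m \in \cP$ (Lemma~\ref{lem:exact2in3}) with $\sigma_m \neq \{1,2,\ldots,q+1\}$. This is the crucial point that keeps us away from the exceptional case of Lemma~\ref{lem:diffExtent}.

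Now recall the Type~D construction: each $D_m$ is extended only through its assigned block of cosets $C_{j(m)}$, which consists of $q-1$ cosets of the extension code, and the extension of $D_m$ by a matrix $Z'$ and a vector $v$ produces the 3-subspace whose last two rows are $E(Z',v)$. For a single coset $C' \subseteq C_{j(m)}$ I would invoke Lemma~\ref{lem:diffExtent} with the index set $\sigma_m$: because $\sigma_m \neq \{1,\ldots,q+1\}$, distinct pairs $(Z',v) \in C' \times \F_q^2$ give distinct 2-subspaces on the coordinates $\sigma_m$. The last two rows on $\sigma_m$ again form an extension matrix of $W$, since restricting to $\sigma_m \in \cP$ yields a legitimate 2-subspace representation, so this is an injective map from the $q^4$ pairs $(Z',v)$ into the set of extensions of $W$ to $\F_q^6$, which by Lemma~\ref{lem:extend} also has size $q^4$. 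Injectivity together with equality of cardinalities forces a bijection, so $\tilde W$ is realized by exactly one pair $(Z',v)$ with $Z' \in C'$, regardless of which coset $Z$ itself lies in.

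Finally I would assemble the count: each coset $C' \subseteq C_{j(m)}$ yields exactly one 3-subspace of $\dS_\cD$ extended from $D_m$ that contains $\tilde W$, and $C_{j(m)}$ has $q-1$ cosets, so $D_m$ contributes exactly $q-1$ such 3-subspaces (distinct cosets give distinct $Z'$, hence distinct 3-subspaces). Summing the $q+1$ disjoint local counts gives $(q+1)(q-1) = q^2-1$, as claimed. I expect the main obstacle to be the bijection step: one must confirm that every restriction to $\sigma_m$ genuinely lands among the extensions of $W$, and that the counting argument legitimately upgrades the injectivity of Lemma~\ref{lem:diffExtent} to surjectivity. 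Once this is secured, the global bookkeeping---no double counting across the $D_m$ and across cosets---is routine.
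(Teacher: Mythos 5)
Your argument is correct and is exactly the route the paper intends: counting occurrences of a fixed extension $\tilde W$ of $W\in\cB\cup\cC$ reduces, via Lemma~\ref{lem:diffExtent} together with Lemma~\ref{lem:extend}, to exactly one occurrence per coset per 3-subspace of $\cD$ containing $W$, giving $(q+1)(q-1)=q^2-1$. The paper compresses all of this into a single sentence (asserting that each 2-subspace of $\cB\cup\cC$ lies in ``$q^2-1$ 3-subspaces of $\cD$,'' which really means $q+1$ members of $\cD$ each extended through $q-1$ cosets), so your write-up supplies the bookkeeping the paper omits, including the key observation that $\sigma_m\neq\{1,\dots,q+1\}$ keeps you out of the exceptional case of Lemma~\ref{lem:diffExtent}.
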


\begin{lemma}
\label{lem:5zero}
Each 2-subspace of $\F_q^6$ which contains a vector which start with five \emph{zeroes} is contained either
in $\dS_0$ or contained $q^2$ times in $\dS_\cB$.
\end{lemma}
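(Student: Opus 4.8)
The plan is to reduce everything to the single exceptional direction $e_6=(0,0,0,0,0,1)$ and then to count. A nonzero vector of $\F_q^6$ whose first five coordinates vanish is a scalar multiple of $e_6$, so any 2-subspace $W$ as in the statement contains $e_6$, and I may write $W=\Span{e_6,w}$ where, after subtracting a suitable multiple of $e_6$, the vector $w$ has its sixth coordinate equal to zero. Let $\pi:\F_q^6\to\F_q^4$ be the projection onto the first four coordinates and set $x\deff\pi(w)$.

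First I would dispose of the degenerate case $x=0$. Here $w$ is a nonzero vector supported only on the fifth coordinate, hence $w$ is a multiple of $e_5$ and $W=\Span{e_5,e_6}$ is exactly the 2-subspace of $\F_q^6$ whose first four rows form the $0$-subspace of $\F_q^4$, i.e.\ $W\in\dS_0$. One can also check directly that this particular $W$ lies in no member of $\dS_\cB$ (since $e_5\notin\tilde X$ for every base plane $\tilde X$ occurring below), so the two alternatives in the statement are mutually exclusive.

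The substance is the case $x\neq0$, where I claim $W$ lies in exactly $q^2$ members of $\dS_\cB$. The key structural observation is that every $T\in\dS_\cB$ has the form $T=\tilde X\oplus\Span{e_6}$, where $\tilde X=\{(u,\phi(u),0):u\in X\}$ for some 2-subspace $X\in\cB$ and some linear functional $\phi:X\to\F_q$; the functional $\phi$ records the fifth-row data of the Type~B extension, it is linear precisely because the columns of $X$ satisfy $v_i=\alpha^{i-3}v_1+v_2$, and as the extension matrix with zero second row ranges over its $q^2$ choices, $\phi$ ranges over all $q^2$ functionals on $X$. Since $\tilde X$ and $w$ both have sixth coordinate $0$ while $e_6$ does not, one gets $W\subseteq T$ if and only if $w\in\tilde X$, i.e.\ if and only if $x\in X$ and $\phi(x)=w_5$.

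It then remains to count. Because $\cB$ is a union of $q$ spreads of $\F_q^4$ and each spread partitions the nonzero vectors, there are exactly $q$ planes $X\in\cB$ with $x\in X$, one per spread. For each such $X$, evaluation at the nonzero vector $x$ is a surjective linear map from the two-dimensional dual of $X$ onto $\F_q$, so exactly $q$ of the $q^2$ functionals $\phi$ satisfy $\phi(x)=w_5$. This yields $q\cdot q=q^2$ subspaces $T$, and they are distinct because distinct $X$ give distinct projections $\pi(T)=X$ while distinct $\phi$ give distinct base planes $\tilde X$; by the displayed equivalence these are precisely the members of $\dS_\cB$ containing $W$. The main obstacle will be the structural identification of each $T\in\dS_\cB$ as the graph $\{(u,\phi(u),0):u\in X\}\oplus\Span{e_6}$ — that is, unwinding the two-step ``extend to $\F_q^5$, then expand with $e_6$'' construction of Type~B into this clean description; once that is in hand, the spread-plus-functional counting is routine.
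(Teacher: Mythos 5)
Your proof is correct and follows essentially the same route as the paper's: both reduce the problem to counting how many of the Type~B base planes $\tilde X$ (2-subspaces of $\F_q^5$, equivalently pairs of a plane $X\in\cB$ through $\pi(w)$ and a fifth-row functional matching $w_5$) contain the complementary vector $w$, obtaining $q\cdot q=q^2$. Your write-up is somewhat more explicit than the paper's --- in particular the identification $T=\tilde X\oplus\Span{e_6}$, the distinctness check, and the degenerate case $\pi(w)=0$ are spelled out rather than left implicit --- but the underlying counting argument is the same.
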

\begin{proof}
The unique 2-subspace in which all vectors start with four or five \emph{zeroes} is contained in $\dS_0$.

Vectors which start with five \emph{zeroes} are contained in $\dS_0$ and in the extensions of 2-subspaces
from $\cB$. The reason is that the 2-subspaces of $\cB$ are first extended to 2-subspaces of $\F_q^5$ in $q^2$
distinct ways. Since $\cB$ contains $q$ spreads, it follows that each nonzero vector of length 4 is contained $q$ times in the 2-subspaces
of $\cB$. Since there are $q^2$ distinct extensions of a 2-subspace of $\F_q^4$ to a 2-subspace of $\F_q^5$ is follows
that each vector of length 4 is extended with a symbol $\xi \in \F_q$ to a vector of length 5 exactly $q$ times.
Hence, each nonzero vector of length 5 appears in the extensions of $\cB$ to 2-subspaces of $\F_q^5$ exactly $q^2$ times.
Thus, each vector of length 6 appears exactly $q^2$ times in the extensions
(with expansions) of $\cB$ to 3-subspaces in a unique way. Thus, each 2-subspace
which contains a vector of length 6 starting with 5 \emph{zeroes} is contained in $q^2$ distinct 3-subspaces of $\dS_\cB$.
\end{proof}

\begin{lemma}
\label{lem:4zero}
Each 2-subspace of $\F_q^6$ which contains a vector which start with four \emph{zeroes} and the
5-th symbol is nonzero, is contained either
in $\dS_0$ or contained $q^2$ times in $\dS_\cC$.
\end{lemma}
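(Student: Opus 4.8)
The plan is to mimic the proof of Lemma~\ref{lem:5zero}, with the role of $\dS_\cB$ and the expansion vector $e_6$ now played by $\dS_\cC$ and the expansion vector $e_5$. Write a typical vector that starts with four \emph{zeroes} and has nonzero fifth symbol as $v_0=(0,0,0,0,a,b)$ with $a\neq 0$; after scaling we may assume that $V$ contains the vector $w_\xi\deff(0,0,0,0,1,\xi)$, where $\xi\deff b/a\in\F_q$. First I would dispose of the degenerate case: if \emph{every} vector of $V$ begins with four \emph{zeroes}, then $V$ is the unique $2$-subspace $\spn\{(0,0,0,0,1,0),(0,0,0,0,0,1)\}$, which is precisely the element of $\dS_0$, and this is the first alternative of the statement. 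Otherwise $V$ contains a vector $v_1=(w_1,d_1,f_1)$ whose length-$4$ prefix $w_1$ is nonzero, and $V=\spn\{w_\xi,v_1\}$.

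Next I would recall the structure of the subspaces of $\dS_\cC$. By the Extension of Type~C, every $3$-subspace $W\in\dS_{\cC_{\xi'}}$ is obtained from a unique $2$-subspace $X\in\cC_{\xi'}$: one embeds $X$ into $\F_q^5$ with an all-zero fifth row, expands with $e_5$ to a $3$-subspace $L=\{(w,c):w\in X,\ c\in\F_q\}$ of $\F_q^5$, and then assigns a sixth row subject to the $(q+2)$-th entry being $\xi'$. Thus $W=\{(w,c,\psi(w,c)):w\in X,\ c\in\F_q\}$ for a linear map $\psi\colon L\to\F_q$ with $\psi(e_5)=\xi'$; the length-$4$ projection of $W$ is exactly $X$, its $(q+2)$-th column is $w_{\xi'}$, and there are $q^2$ admissible maps $\psi$ for each $X$. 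The key localisation step is that the vectors of $W$ with zero prefix form precisely the line $\spn\{w_{\xi'}\}$; since $w_\xi\in V\subseteq W$ forces $w_\xi\in\spn\{w_{\xi'}\}$, we must have $\xi'=\xi$. Hence only the subspaces of $\dS_{\cC_\xi}$ can contain $V$ (in particular this shows $V_0$ is contained in no subspace of $\dS_\cC$, so the two alternatives are mutually exclusive).

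Finally I would carry out the count inside $\dS_{\cC_\xi}$. Such a $W$ contains $V$ if and only if it contains $v_1$, that is, if and only if $w_1\in X$ (so that $(w_1,d_1)\in L$) and $\psi(w_1,d_1)=f_1$. Because $\cC_\xi$ is a union of $q$ pairwise disjoint spreads of $\F_q^4$, the nonzero vector $w_1$ lies in exactly one $2$-subspace of each spread, hence in exactly $q$ subspaces $X\in\cC_\xi$. For each such $X$, among the $q^2$ admissible maps $\psi$ the extra condition $\psi(w_1,d_1)=f_1$ is one further linear constraint, and it is independent of $\psi(e_5)=\xi$ precisely because $(w_1,d_1)$ and $e_5$ are linearly independent in $L$, which holds since $w_1\neq 0$; so exactly $q$ of the maps survive. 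As the correspondence $(X,\psi)\mapsto W$ is injective ($X$ is recovered as the length-$4$ projection of $W$ and $\psi$ is then determined), this gives $q\cdot q=q^2$ distinct subspaces of $\dS_\cC$ containing $V$, as required.

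The main thing to get right is this last counting step: the independence of the two linear conditions on the sixth row (which rests on $w_1\neq 0$) and the fact that the spread structure of $\cC_\xi$ yields exactly $q$ admissible $X$. The localisation to the single value $\xi$ is what guarantees that no subspace of $\dS_{\cC_{\xi'}}$ with $\xi'\neq\xi$ contributes, and everything else is the routine bookkeeping already used in Lemma~\ref{lem:5zero}.
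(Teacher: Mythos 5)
Your proof is correct and follows essentially the same route as the paper's: both rest on the spread structure of $\cC$ (each nonzero vector of $\F_q^4$ lying in $q^2$ of its $2$-subspaces, $q$ per $\cC_\xi$) and on the Type-C extension mechanism. Your write-up is in fact more explicit than the paper's at the one delicate point --- the paper transfers the multiplicity $q^2$ from $2$-subspaces of $\F_q^5$ to their extensions in $\F_q^6$ with only a brief appeal to the partition into the sets $\cC_\xi$, whereas you justify this by the localisation $\xi'=\xi$ (read off the $(q+2)$-th column) together with the independence of the two linear constraints on the sixth row, which rests on $w_1\neq 0$.
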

\begin{proof}
The unique 2-subspace in which all vectors start with four \emph{zeroes} is contained in $\dS_0$.

Vectors which start with four \emph{zeroes} and the 5-th symbol is nonzero,
are contained in $\dS_0$ and in the extensions of subspaces
from $\cC$. Since $\cC$ contains $q^2$ spreads, it follows that each nonzero vector of
length 4 is contained $q^2$ times in the 2-subspaces of $\cC$.
Hence, each vector of length 5 appears exactly $q^2$ times in the extensions
(with expansions) of $\cC$ to 3-subspaces in a unique way. Thus, each 2-subspace
which contains a vector of length 5 starting with 4 \emph{zeroes} and 5-th nonzero,
is contained in $q^2$ distinct 3-subspaces of $\F_q^5$ extended (and expanded) from $\cC$.
For each 2-subspace of $\F_q^5$ which contains a vector which starts with 4 \emph{zeroes}
there are $q^2$ distinct extensions to a 2-subspace of $\F_q^6$. Each one is considered in the
extensions of $\cC_\xi$, $\xi \in \F_q$, and since each 2-subspace of $\F_q^5$ was contained
$q^2$ times, it follows that the same is true for the 2-subspaces of $\F_q^6$ which
contain a vector which start with four \emph{zeroes}and the
5-th symbol is nonzero.
\end{proof}

A consequence of Lemmas~\ref{lem:num_sub},~\ref{lem:AinD},~\ref{lem:BinB},~\ref{lem:CinC},~\ref{lem:BCinD},~\ref{lem:5zero},~\ref{lem:4zero},
we have the concluding result.

\begin{theorem}
$\dS_0 \cup \dS_\cA$ is a derived $q$-Fano plane and $\dS_\cB \cup \dS_\cC \cup \dS_\cD$ is a residual $q$-Fano plane.
\end{theorem}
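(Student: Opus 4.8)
The plan is to establish the two assertions separately, the first being immediate and the second carrying the content. For the derived half, Lemma~\ref{lem:spread_A} already shows that $\dS_0\cup\dS_\cA$ is a spread in $\F_q^6$, i.e.\ a $q$-Steiner system $\dS_q(1,2,6)$; since a derived $q$-Fano plane is by definition exactly such a system, there is nothing further to do.

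For the residual half I must verify, according to the definition of a residual $q$-Steiner system with $t=2$, $k=3$, $n=7$, that every 2-subspace $W$ of $\F_q^6$ which is not a block of the derived system $\dS_0\cup\dS_\cA$ lies in precisely $q^2$ of the 3-subspaces of $\dS_\cB\cup\dS_\cC\cup\dS_\cD$. The organising idea is to stratify all such $W$ by the projection $\pi$ onto the first four coordinates, that is by $\dim\ker(\pi|_W)$, and to read off the covering multiplicity from the earlier lemmas in each stratum. The degenerate strata come first. If $\ker(\pi|_W)$ is two-dimensional then $W$ consists entirely of vectors beginning with four zeroes, so $W$ is the block of $\dS_0$ and is excluded. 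If $\ker(\pi|_W)$ is one-dimensional, then $W$ contains, up to scalars, a unique nonzero vector beginning with four zeroes; splitting on its fifth coordinate, Lemma~\ref{lem:4zero} (fifth coordinate nonzero) and Lemma~\ref{lem:5zero} (vector beginning with five zeroes) each give covering multiplicity exactly $q^2$, from $\dS_\cC$ and $\dS_\cB$ respectively.

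The substantive stratum is $\ker(\pi|_W)=0$, where $\pi(W)$ is a genuine 2-subspace of $\F_q^4$ and hence lies in exactly one of the parallelism classes $\cA$, $\cB$, $\cC$. When $\pi(W)\in\cA$ and $W$ is not the corresponding spread block, Lemma~\ref{lem:AinD} supplies multiplicity $q^2$, entirely within $\dS_\cD$. When $\pi(W)\in\cB$, Lemma~\ref{lem:BinB} contributes $1$ from $\dS_\cB$ and Lemma~\ref{lem:BCinD} contributes $q^2-1$ from $\dS_\cD$, summing to $q^2$; the case $\pi(W)\in\cC$ is symmetric, with Lemmas~\ref{lem:CinC} and~\ref{lem:BCinD} giving $1+(q^2-1)=q^2$. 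Thus in every stratum the multiplicity is exactly $q^2$, which is the residual property.

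The step requiring the most care --- and the main obstacle --- is the legitimacy of this summation: I must ensure that in each stratum the covering 3-subspaces come only from the pieces named, so that contributions are neither omitted nor double counted. This reduces to comparing projections. A containment $W\subseteq Y$ forces $\pi(W)\subseteq\pi(Y)$, and a short check of the constructions shows that $\pi$ sends every 3-subspace of $\dS_\cB$ (resp.\ $\dS_\cC$) onto its defining $\cB$- (resp.\ $\cC$-)subspace, while it sends every 3-subspace of $\dS_\cD$ injectively onto a 3-subspace of $\cD$, which by Lemma~\ref{lem:representDbyA} carries a unique $\cA$-subspace. Consequently the parallelism class of $\pi(W)$ determines which pieces can possibly contain $W$: in particular $\dS_\cD$ contains no vector beginning with four zeroes, so it never meets the degenerate strata, and the spread blocks of $\dS_\cA$, being extensions by the code $C$ rather than by its cosets $C_j$, are not recovered inside $\dS_\cD$. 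Once this bookkeeping is in place the five cases assemble into the claim. If one prefers to sidestep the exact per-stratum count, Lemma~\ref{lem:num_sub} offers a shortcut: because $|\dS|$ matches the block count of a $q$-Fano plane and $\dS_0\cup\dS_\cA$ is a spread, a double count of incidences pins the average multiplicity at $q^2$, so it suffices to prove the one-sided bound that every uncovered 2-subspace is covered at most $q^2$ (or at least $q^2$) times. Either route yields that $\dS_\cB\cup\dS_\cC\cup\dS_\cD$ is a residual $q$-Fano plane.
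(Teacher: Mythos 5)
Your proof is correct and follows essentially the same route as the paper, which simply cites Lemmas~\ref{lem:num_sub}, \ref{lem:AinD}, \ref{lem:BinB}, \ref{lem:CinC}, \ref{lem:BCinD}, \ref{lem:5zero}, and \ref{lem:4zero} together with the spread property of Lemma~\ref{lem:spread_A} and the sufficiency of a one-sided bound. Your stratification by $\dim\ker(\pi|_W)$ and the check that the contributing families are disjoint in each stratum make explicit exactly the bookkeeping the paper leaves implicit.
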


\section{Conclusions and Future Research}
\label{sec:conclude}

We have presented a new definition for the residual $q$-design which reflects better the
relations between the design on one side and its derived design and residual designs on the other hand.
We have constructed designs with the parameters of the residual design of the\linebreak $q$-Fano plane
for each power of a prime $q$. This is the closest as was achieved until today towards a construction
of infinite family of $q$-Steiner systems, arguably, the most intriguing open problem in block design today.
Our construction is flexible which enable to construct many residual $q$-Fano planes for each $q$. The number
of different residual $q$-Fano planes is increased with the increase of $q$. The first point with flexibility is
the number of parallelisms in $\F_q^4$ which are generally increasing as $q$ get larger. The number of
partitions of the spreads in such a parallelism into the sets $\cA$, $\cB$, and $\cC$, is clearly
increasing as $q$ get larger. Similarly, $\cC$ can be partitions in a few different ways to
$\{ \cC_\xi ~:~ \xi \in \F_q \}$ and the number of such partitions is clearly increasing with $q$.
The extension code can be chosen in a few different ways and the number
of different ways is also increasing when $q$ increases. Finally, there are many different ways to make
the extensions of Type D. First, the cosets of the extension code (the extension space without the extension code)
can be partitioned in a few different ways (with an exception for $q=2$) to $C_1,C_2,\ldots,C_{q+1}$
and these number of different ways is clearly increasing with the increase of $q$.
The matching of the pairs $(Y_i,C_i)$, for the extension of Type D, can be done in $(q+1)!$ different ways and this can be done
for each spread in $\cA$. Hence, we have many different residual $q$-Fano planes for each $q$ and each one
might have different properties and can be used for different purpose. This is a subject for future research.
In particular one can find different residual $q$-Fano planes which differ in a small number of subspaces
(by using pairs in the extensions of Type D which differ only in one transposition). One can easily
verify that the structure obtained from the dual subspaces of the subspaces in a residual $q$-Fano plane is also
a residual $q$-Fano plane. This can lead to other interesting properties of the $q$-Fano plane and
this is a topic for future research. Finally, an applications of
the new structure in network coding is presented in~\cite{EtZh17}.

The new construction and the new structure open also a sequence of other directions for future research, for which we list a few:

\begin{itemize}
\item Provide more constructions for residual $q$-Steiner systems with other parameters.

\item Can a residual $q$-Steiner system exists, while a related $q$-Steiner system
does not exist? We conjecture that the answer is positive.

\item Prove that the residual $q$-Fano plane constructed can be extended or cannot be extended
to a $q$-Fano plane. We conjecture that for $q=2$ it cannot be extended, while for some $q>2$ such
an extension might be possible.

\item Examine the properties of the residual $q$-Steiner systems with respect to the possible
existence of a related $q$-Steiner systems.
\end{itemize}

Finally, we note that the subspaces used throughout the construction can be represented by their
basis and the same is true for the construction. We believe that with such more natural representation
the proof of the main result and its verification will be more complicated and less intuitive.
But, the construction can be easily given with basis for subspaces. For 2-subspaces the first
columns can be taken as the basis. For 3-subspaces, the $(q+2)$-th column can be taken to complete
the basis. These three columns for the basis are well defined and hence one can generated the subspaces
of the design without generating the matrices.

\begin{center}
{\bf Acknowledgments}
\end{center}
Tuvi Etzion would like to thank Alex Vardy for enormous conversations on
the problem during the last ten years.

\begin{appendix}

\begin{center}
{\bf Appendix}
\end{center}

\begin{table}[ht]
\centering \caption{Extensions of Type D with $Y_1=\cY_4$, $Y_2=\cY_5$, and $Y_3=\cY_6$}
\label{tab:S_D3}
\begin{tiny}
\begin{tabular}{|c|c|c|c|c|c|c|c|c|c|c|c|}
\hline
\hspace{-0.4cm} $\begin{array}{c}
0110011 \\
0111100 \\
1010101 \\
0000000 \\
0000000 \\
1100110
\end{array}$ \hspace{-0.4cm}  & \hspace{-0.4cm} $\begin{array}{c}
0110011 \\
0111100 \\
1010101 \\
0000000 \\
0000000 \\
1101001
\end{array}$ \hspace{-0.4cm} & \hspace{-0.4cm} $\begin{array}{c}
0110011 \\
0111100 \\
1010101 \\
0000000 \\
0001111 \\
1100110
\end{array}$ \hspace{-0.4cm}  & \hspace{-0.4cm} $\begin{array}{c}
0110011 \\
0111100 \\
1010101 \\
0000000 \\
0001111 \\
1101001
\end{array}$ \hspace{-0.4cm} & \hspace{-0.4cm} $\begin{array}{c}
0110011 \\
0111100 \\
1010101 \\
0000000 \\
0110011 \\
0000000
\end{array}$ \hspace{-0.4cm} & \hspace{-0.4cm} $\begin{array}{c}
0110011 \\
0111100 \\
1010101 \\
0000000 \\
0110011 \\
0001111
\end{array}$ \hspace{-0.4cm} & \hspace{-0.4cm} $\begin{array}{c}
0110011 \\
0111100 \\
1010101 \\
0000000 \\
0111100 \\
0000000
\end{array}$ \hspace{-0.4cm} & \hspace{-0.4cm} $\begin{array}{c}
0110011 \\
0111100 \\
1010101 \\
0000000 \\
0111100 \\
0001111
\end{array}$ \hspace{-0.4cm} & \hspace{-0.4cm} $\begin{array}{c}
0110011 \\
0111100 \\
1010101 \\
0000000 \\
1010101 \\
1010101
\end{array}$ \hspace{-0.4cm} & \hspace{-0.4cm} $\begin{array}{c}
0110011 \\
0111100 \\
1010101 \\
0000000 \\
1010101 \\
1011010
\end{array}$ \hspace{-0.4cm} & \hspace{-0.4cm} $\begin{array}{c}
0110011 \\
0111100 \\
1010101 \\
0000000 \\
1011010 \\
1010101
\end{array}$ \hspace{-0.4cm} &  \hspace{-0.4cm} $\begin{array}{c}
0110011 \\
0111100 \\
1010101 \\
0000000 \\
1011010 \\
1011010
\end{array}$ \hspace{-0.4cm}
\\ \hline
\hspace{-0.4cm} $\begin{array}{c}
0110011 \\
0111100 \\
1010101 \\
0000000 \\
1100110 \\
0110011
\end{array}$ \hspace{-0.4cm} & \hspace{-0.4cm} $\begin{array}{c}
0110011 \\
0111100 \\
1010101 \\
0000000 \\
1100110 \\
0111100
\end{array}$ \hspace{-0.4cm} & \hspace{-0.4cm} $\begin{array}{c}
0110011 \\
0111100 \\
1010101 \\
0000000 \\
1101001 \\
0110011
\end{array}$ \hspace{-0.4cm} & \hspace{-0.4cm} $\begin{array}{c}
0110011 \\
0111100 \\
1010101 \\
0000000 \\
1101001 \\
0111100
\end{array}$ \hspace{-0.4cm} & \hspace{-0.4cm} $\begin{array}{c}
0110011 \\
0110011 \\
1010101 \\
0001111 \\
1100110 \\
0000000
\end{array}$ \hspace{-0.4cm} & \hspace{-0.4cm} $\begin{array}{c}
0110011 \\
0110011 \\
1010101 \\
0001111 \\
1100110 \\
0001111
\end{array}$ \hspace{-0.4cm} & \hspace{-0.4cm} $\begin{array}{c}
0110011 \\
0110011 \\
1010101 \\
0001111 \\
1101001 \\
0000000
\end{array}$ \hspace{-0.4cm} & \hspace{-0.4cm} $\begin{array}{c}
0110011 \\
0110011 \\
1010101 \\
0001111 \\
1101001 \\
0001111
\end{array}$ \hspace{-0.4cm} & \hspace{-0.4cm} $\begin{array}{c}
0110011 \\
0110011 \\
1010101 \\
0001111 \\
1010101 \\
1100110
\end{array}$ \hspace{-0.4cm} & \hspace{-0.4cm} $\begin{array}{c}
0110011 \\
0110011 \\
1010101 \\
0001111 \\
1010101 \\
1101001
\end{array}$ \hspace{-0.4cm} & \hspace{-0.4cm} $\begin{array}{c}
0110011 \\
0110011 \\
1010101 \\
0001111 \\
1011010 \\
1100110
\end{array}$ \hspace{-0.4cm} & \hspace{-0.4cm} $\begin{array}{c}
0110011 \\
0110011 \\
1010101 \\
0001111 \\
1011010 \\
1101001
\end{array}$ \hspace{-0.4cm}
\\ \hline
\hspace{-0.4cm} $\begin{array}{c}
0110011 \\
0110011 \\
1010101 \\
0001111 \\
0110011 \\
0110011
\end{array}$ \hspace{-0.4cm} & \hspace{-0.4cm} $\begin{array}{c}
0110011 \\
0110011 \\
1010101 \\
0001111 \\
0110011 \\
0111100
\end{array}$ \hspace{-0.4cm} & \hspace{-0.4cm} $\begin{array}{c}
0110011 \\
0110011 \\
1010101 \\
0001111 \\
0111100 \\
0110011
\end{array}$ \hspace{-0.4cm} & \hspace{-0.4cm} $\begin{array}{c}
0110011 \\
0110011 \\
1010101 \\
0001111 \\
0111100 \\
0111100
\end{array}$ \hspace{-0.4cm} & \hspace{-0.4cm} $\begin{array}{c}
0110011 \\
0110011 \\
1010101 \\
0001111 \\
0000000 \\
1010101
\end{array}$ \hspace{-0.4cm} & \hspace{-0.4cm} $\begin{array}{c}
0110011 \\
0110011 \\
1010101 \\
0001111 \\
0000000 \\
1011010
\end{array}$ \hspace{-0.4cm} & \hspace{-0.4cm} $\begin{array}{c}
0110011 \\
0110011 \\
1010101 \\
0001111 \\
0001111 \\
1010101
\end{array}$ \hspace{-0.4cm} & \hspace{-0.4cm} $\begin{array}{c}
0110011 \\
0110011 \\
1010101 \\
0001111 \\
0001111 \\
1011010
\end{array}$ \hspace{-0.4cm} & \hspace{-0.4cm} $\begin{array}{c}
0110011 \\
0111100 \\
1010101 \\
0001111 \\
1100110 \\
1100110
\end{array}$ \hspace{-0.4cm} & \hspace{-0.4cm} $\begin{array}{c}
0110011 \\
0111100 \\
1010101 \\
0001111 \\
1100110 \\
1101001
\end{array}$ \hspace{-0.4cm} &  \hspace{-0.4cm} $\begin{array}{c}
0110011 \\
0111100 \\
1010101 \\
0001111 \\
1101001 \\
1100110
\end{array}$ \hspace{-0.4cm} & \hspace{-0.4cm} $\begin{array}{c}
0110011 \\
0111100 \\
1010101 \\
0001111 \\
1101001 \\
1101001
\end{array}$ \hspace{-0.4cm}
\\ \hline
\hspace{-0.4cm} $\begin{array}{c}
0110011 \\
0111100 \\
1010101 \\
0001111 \\
1010101 \\
0000000
\end{array}$ \hspace{-0.4cm} & \hspace{-0.4cm} $\begin{array}{c}
0110011 \\
0111100 \\
1010101 \\
0001111 \\
1010101 \\
0001111
\end{array}$ \hspace{-0.4cm} & \hspace{-0.4cm} $\begin{array}{c}
0110011 \\
0111100 \\
1010101 \\
0001111 \\
1011010 \\
0000000
\end{array}$ \hspace{-0.4cm} & \hspace{-0.4cm} $\begin{array}{c}
0110011 \\
0111100 \\
1010101 \\
0001111 \\
1011010 \\
0001111
\end{array}$ \hspace{-0.4cm} & \hspace{-0.4cm} $\begin{array}{c}
0110011 \\
0111100 \\
1010101 \\
0001111 \\
0110011 \\
1010101
\end{array}$ \hspace{-0.4cm} & \hspace{-0.4cm} $\begin{array}{c}
0110011 \\
0111100 \\
1010101 \\
0001111 \\
0110011 \\
1011010
\end{array}$ \hspace{-0.4cm} & \hspace{-0.4cm} $\begin{array}{c}
0110011 \\
0111100 \\
1010101 \\
0001111 \\
0111100 \\
1010101
\end{array}$ \hspace{-0.4cm} & \hspace{-0.4cm} $\begin{array}{c}
0110011 \\
0111100 \\
1010101 \\
0001111 \\
0111100 \\
1011010
\end{array}$ \hspace{-0.4cm} & \hspace{-0.4cm} $\begin{array}{c}
0110011 \\
0111100 \\
1010101 \\
0001111 \\
0000000 \\
0110011
\end{array}$ \hspace{-0.4cm} & \hspace{-0.4cm} $\begin{array}{c}
0110011 \\
0111100 \\
1010101 \\
0001111 \\
0000000 \\
0111100
\end{array}$ \hspace{-0.4cm} & \hspace{-0.4cm} $\begin{array}{c}
0110011 \\
0111100 \\
1010101 \\
0001111 \\
0001111 \\
0110011
\end{array}$ \hspace{-0.4cm} & \hspace{-0.4cm} $\begin{array}{c}
0110011 \\
0111100 \\
1010101 \\
0001111 \\
0001111 \\
0111100
\end{array}$ \hspace{-0.4cm}
\\ \hline
\end{tabular}
\end{tiny}
\end{table}

\begin{table}[ht]
\centering \caption{Extensions of Type D with $Y_1=\cY_7$, $Y_2=\cY_8$, and $Y_3=\cY_9$}
\label{tab:S_D4}
\begin{tiny}
\begin{tabular}{|c|c|c|c|c|c|c|c|c|c|c|c|}
\hline
\hspace{-0.4cm} $\begin{array}{c}
0110011 \\
1010101 \\
0110011 \\
0111100 \\
0000000 \\
1100110
\end{array}$ \hspace{-0.4cm} & \hspace{-0.4cm} $\begin{array}{c}
0110011 \\
1010101 \\
0110011 \\
0111100 \\
0000000 \\
1101001
\end{array}$ \hspace{-0.4cm} & \hspace{-0.4cm} $\begin{array}{c}
0110011 \\
1010101 \\
0110011 \\
0111100 \\
0001111 \\
1100110
\end{array}$ \hspace{-0.4cm} & \hspace{-0.4cm} $\begin{array}{c}
0110011 \\
1010101 \\
0110011 \\
0111100 \\
0001111 \\
1101001
\end{array}$ \hspace{-0.4cm} & \hspace{-0.4cm} $\begin{array}{c}
0110011 \\
1010101 \\
0110011 \\
0111100 \\
0110011 \\
0000000
\end{array}$ \hspace{-0.4cm} & \hspace{-0.4cm} $\begin{array}{c}
0110011 \\
1010101 \\
0110011 \\
0111100 \\
0110011 \\
0001111
\end{array}$ \hspace{-0.4cm} & \hspace{-0.4cm} $\begin{array}{c}
0110011 \\
1010101 \\
0110011 \\
0111100 \\
0111100 \\
0000000
\end{array}$ \hspace{-0.4cm} & \hspace{-0.4cm} $\begin{array}{c}
0110011 \\
1010101 \\
0110011 \\
0111100 \\
0111100 \\
0001111
\end{array}$ \hspace{-0.4cm} & \hspace{-0.4cm} $\begin{array}{c}
0110011 \\
1010101 \\
0110011 \\
0111100 \\
1010101 \\
1010101
\end{array}$ \hspace{-0.4cm} & \hspace{-0.4cm} $\begin{array}{c}
0110011 \\
1010101 \\
0110011 \\
0111100 \\
1010101 \\
1011010
\end{array}$ \hspace{-0.4cm} & \hspace{-0.4cm} $\begin{array}{c}
0110011 \\
1010101 \\
0110011 \\
0111100 \\
1011010 \\
1010101
\end{array}$ \hspace{-0.4cm} & \hspace{-0.4cm} $\begin{array}{c}
0110011 \\
1010101 \\
0110011 \\
0111100 \\
1011010 \\
1011010
\end{array}$ \hspace{-0.4cm}
\\ \hline
\hspace{-0.4cm} $\begin{array}{c}
0110011 \\
1010101 \\
0110011 \\
0111100 \\
1100110 \\
0110011
\end{array}$ \hspace{-0.4cm} & \hspace{-0.4cm} $\begin{array}{c}
0110011 \\
1010101 \\
0110011 \\
0111100 \\
1100110 \\
0111100
\end{array}$ \hspace{-0.4cm} & \hspace{-0.4cm} $\begin{array}{c}
0110011 \\
1010101 \\
0110011 \\
0111100 \\
1101001 \\
0110011
\end{array}$ \hspace{-0.4cm} & \hspace{-0.4cm} $\begin{array}{c}
0110011 \\
1010101 \\
0110011 \\
0111100 \\
1101001 \\
0111100
\end{array}$ \hspace{-0.4cm} & \hspace{-0.4cm} $\begin{array}{c}
0110011 \\
1010101 \\
0111100 \\
0111100 \\
1100110 \\
0000000
\end{array}$ \hspace{-0.4cm} & \hspace{-0.4cm} $\begin{array}{c}
0110011 \\
1010101 \\
0111100 \\
0111100 \\
1100110 \\
0001111
\end{array}$ \hspace{-0.4cm} & \hspace{-0.4cm} $\begin{array}{c}
0110011 \\
1010101 \\
0111100 \\
0111100 \\
1101001 \\
0000000
\end{array}$ \hspace{-0.4cm} & \hspace{-0.4cm} $\begin{array}{c}
0110011 \\
1010101 \\
0111100 \\
0111100 \\
1101001 \\
0001111
\end{array}$ \hspace{-0.4cm} & \hspace{-0.4cm} $\begin{array}{c}
0110011 \\
1010101 \\
0111100 \\
0111100 \\
1010101 \\
1100110
\end{array}$ \hspace{-0.4cm} & \hspace{-0.4cm} $\begin{array}{c}
0110011 \\
1010101 \\
0111100 \\
0111100 \\
1010101 \\
1101001
\end{array}$ \hspace{-0.4cm} & \hspace{-0.4cm} $\begin{array}{c}
0110011 \\
1010101 \\
0111100 \\
0111100 \\
1011010 \\
1100110
\end{array}$ \hspace{-0.4cm} & \hspace{-0.4cm} $\begin{array}{c}
0110011 \\
1010101 \\
0111100 \\
0111100 \\
1011010 \\
1101001
\end{array}$ \hspace{-0.4cm}
\\ \hline
\hspace{-0.4cm} $\begin{array}{c}
0110011 \\
1010101 \\
0111100 \\
0111100 \\
0110011 \\
0110011
\end{array}$ \hspace{-0.4cm} & \hspace{-0.4cm} $\begin{array}{c}
0110011 \\
1010101 \\
0111100 \\
0111100 \\
0110101 \\
0111100
\end{array}$ \hspace{-0.4cm} & \hspace{-0.4cm} $\begin{array}{c}
0110011 \\
1010101 \\
0111100 \\
0111100 \\
0111100 \\
0110110
\end{array}$ \hspace{-0.4cm} & \hspace{-0.4cm} $\begin{array}{c}
0110011 \\
1010101 \\
0111100 \\
0111100 \\
0111100 \\
0111100
\end{array}$ \hspace{-0.4cm} & \hspace{-0.4cm} $\begin{array}{c}
0110011 \\
1010101 \\
0111100 \\
0111100 \\
0000000 \\
1010101
\end{array}$ \hspace{-0.4cm} & \hspace{-0.4cm} $\begin{array}{c}
0110011 \\
1010101 \\
0111100 \\
0111100 \\
0000000 \\
1011010
\end{array}$ \hspace{-0.4cm} & \hspace{-0.4cm} $\begin{array}{c}
0110011 \\
1010101 \\
0111100 \\
0111100 \\
0001111 \\
1010101
\end{array}$ \hspace{-0.4cm} & \hspace{-0.4cm} $\begin{array}{c}
0110011 \\
1010101 \\
0111100 \\
0111100 \\
0001111 \\
1011010
\end{array}$ \hspace{-0.4cm} & \hspace{-0.4cm} $\begin{array}{c}
0110011 \\
1010101 \\
0111100 \\
0110011 \\
1100110 \\
1100110
\end{array}$ \hspace{-0.4cm} & \hspace{-0.4cm} $\begin{array}{c}
0110011 \\
1010101 \\
0111100 \\
0110011 \\
1100110 \\
1101001
\end{array}$ \hspace{-0.4cm} & \hspace{-0.4cm} $\begin{array}{c}
0110011 \\
1010101 \\
0111100 \\
0110011 \\
1101001 \\
1100110
\end{array}$ \hspace{-0.4cm} & \hspace{-0.4cm} $\begin{array}{c}
0110011 \\
1010101 \\
0111100 \\
0110011 \\
1101001 \\
1101001
\end{array}$ \hspace{-0.4cm}
\\ \hline
\hspace{-0.4cm} $\begin{array}{c}
0110011 \\
1010101 \\
0111100 \\
0110011 \\
1010101 \\
0000000
\end{array}$ \hspace{-0.4cm} & \hspace{-0.4cm} $\begin{array}{c}
0110011 \\
1010101 \\
0111100 \\
0110011 \\
1010101 \\
0001111
\end{array}$ \hspace{-0.4cm} & \hspace{-0.4cm} $\begin{array}{c}
0110011 \\
1010101 \\
0111100 \\
0110011 \\
1011010 \\
0000000
\end{array}$ \hspace{-0.4cm} & \hspace{-0.4cm} $\begin{array}{c}
0110011 \\
1010101 \\
0111100 \\
0110011 \\
1011010 \\
0001111
\end{array}$ \hspace{-0.4cm} & \hspace{-0.4cm} $\begin{array}{c}
0110011 \\
1010101 \\
0111100 \\
0110011 \\
0110011 \\
1010101
\end{array}$ \hspace{-0.4cm} &  \hspace{-0.4cm} $\begin{array}{c}
0110011 \\
1010101 \\
0111100 \\
0110011 \\
0110011 \\
1011010
\end{array}$ \hspace{-0.4cm} & \hspace{-0.4cm} $\begin{array}{c}
0110011 \\
1010101 \\
0111100 \\
0110011 \\
0111100 \\
1010101
\end{array}$ \hspace{-0.4cm} & \hspace{-0.4cm} $\begin{array}{c}
0110011 \\
1010101 \\
0111100 \\
0110011 \\
0111100 \\
1011010
\end{array}$ \hspace{-0.4cm} & \hspace{-0.4cm} $\begin{array}{c}
0110011 \\
1010101 \\
0111100 \\
0110011 \\
0000000 \\
0110011
\end{array}$ \hspace{-0.4cm} & \hspace{-0.4cm} $\begin{array}{c}
0110011 \\
1010101 \\
0111100 \\
0110011 \\
0000000 \\
0111100
\end{array}$ \hspace{-0.4cm} & \hspace{-0.4cm} $\begin{array}{c}
0110011 \\
1010101 \\
0111100 \\
0110011 \\
0001111 \\
0110011
\end{array}$ \hspace{-0.4cm} & \hspace{-0.4cm} $\begin{array}{c}
0110011 \\
1010101 \\
0111100 \\
0110011 \\
0001111 \\
0111100
\end{array}$ \hspace{-0.4cm}
\\ \hline
\end{tabular}
\end{tiny}
\end{table}

\begin{table}[ht]
\centering \caption{Extensions of Type D with $Y_1=\cY_{10}$, $Y_2=\cY_{11}$, and $Y_3=\cY_{12}$}
\label{tab:S_D5}
\begin{tiny}
\begin{tabular}{|c|c|c|c|c|c|c|c|c|c|c|c|}
\hline
\hspace{-0.4cm} $\begin{array}{c}
0110011 \\
1010101 \\
0000000 \\
1011010 \\
0000000 \\
1100110
\end{array}$ \hspace{-0.4cm} & \hspace{-0.4cm} $\begin{array}{c}
0110011 \\
1010101 \\
0000000 \\
1011010 \\
0000000 \\
1101001
\end{array}$ \hspace{-0.4cm} & \hspace{-0.4cm} $\begin{array}{c}
0110011 \\
1010101 \\
0000000 \\
1011010 \\
0001111 \\
1100110
\end{array}$ \hspace{-0.4cm} & \hspace{-0.4cm} $\begin{array}{c}
0110011 \\
1010101 \\
0000000 \\
1011010 \\
0001111 \\
1101001
\end{array}$ \hspace{-0.4cm} & \hspace{-0.4cm} $\begin{array}{c}
0110011 \\
1010101 \\
0000000 \\
1011010 \\
0110011 \\
0000000
\end{array}$ \hspace{-0.4cm} & \hspace{-0.4cm} $\begin{array}{c}
0110011 \\
1010101 \\
0000000 \\
1011010 \\
0110011 \\
0001111
\end{array}$ \hspace{-0.4cm} & \hspace{-0.4cm} $\begin{array}{c}
0110011 \\
1010101 \\
0000000 \\
1011010 \\
0111100 \\
0000000
\end{array}$ \hspace{-0.4cm} & \hspace{-0.4cm} $\begin{array}{c}
0110011 \\
1010101 \\
0000000 \\
1011010 \\
0111100 \\
0001111
\end{array}$ \hspace{-0.4cm} & \hspace{-0.4cm} $\begin{array}{c}
0110011 \\
1010101 \\
0000000 \\
1011010 \\
1010101 \\
1010101
\end{array}$ \hspace{-0.4cm} & \hspace{-0.4cm} $\begin{array}{c}
0110011 \\
1010101 \\
0000000 \\
1011010 \\
1010101 \\
1011010
\end{array}$ \hspace{-0.4cm} & \hspace{-0.4cm} $\begin{array}{c}
0110011 \\
1010101 \\
0000000 \\
1011010 \\
1011010 \\
1010101
\end{array}$ \hspace{-0.4cm} & \hspace{-0.4cm} $\begin{array}{c}
0110011 \\
1010101 \\
0000000 \\
1011010 \\
1011010 \\
1011010
\end{array}$ \hspace{-0.4cm}
\\ \hline
\hspace{-0.4cm} $\begin{array}{c}
0110011 \\
1010101 \\
0000000 \\
1011010 \\
1100110 \\
0110011
\end{array}$ \hspace{-0.4cm} & \hspace{-0.4cm} $\begin{array}{c}
0110011 \\
1010101 \\
0000000 \\
1011010 \\
1100110 \\
0111100
\end{array}$ \hspace{-0.4cm} & \hspace{-0.4cm} $\begin{array}{c}
0110011 \\
1010101 \\
0000000 \\
1011010 \\
1101001 \\
0110011
\end{array}$ \hspace{-0.4cm} & \hspace{-0.4cm} $\begin{array}{c}
0110011 \\
1010101 \\
0000000 \\
1011010 \\
1101001 \\
0111100
\end{array}$ \hspace{-0.4cm} & \hspace{-0.4cm} $\begin{array}{c}
0110011 \\
1010101 \\
0001111 \\
1010101 \\
1100110 \\
0000000
\end{array}$ \hspace{-0.4cm} & \hspace{-0.4cm} $\begin{array}{c}
0110011 \\
1010101 \\
0001111 \\
1010101 \\
1100110 \\
0001111
\end{array}$ \hspace{-0.4cm} & \hspace{-0.4cm} $\begin{array}{c}
0110011 \\
1010101 \\
0001111 \\
1010101 \\
1101001 \\
0000000
\end{array}$ \hspace{-0.4cm} & \hspace{-0.4cm} $\begin{array}{c}
0110011 \\
1010101 \\
0001111 \\
1010101 \\
1101001 \\
0001111
\end{array}$ \hspace{-0.4cm} & \hspace{-0.4cm} $\begin{array}{c}
0110011 \\
1010101 \\
0001111 \\
1010101 \\
1010101 \\
1100110
\end{array}$ \hspace{-0.4cm} & \hspace{-0.4cm} $\begin{array}{c}
0110011 \\
1010101 \\
0001111 \\
1010101 \\
1010101 \\
1101001
\end{array}$ \hspace{-0.4cm} & \hspace{-0.4cm} $\begin{array}{c}
0110011 \\
1010101 \\
0001111 \\
1010101 \\
1011010 \\
1100110
\end{array}$ \hspace{-0.4cm} & \hspace{-0.4cm} $\begin{array}{c}
0110011 \\
1010101 \\
0001111 \\
1010101 \\
1011010 \\
1101001
\end{array}$ \hspace{-0.4cm}
\\ \hline
\hspace{-0.4cm} $\begin{array}{c}
0110011 \\
1010101 \\
0001111 \\
1010101 \\
0110011 \\
0110011
\end{array}$ \hspace{-0.4cm} & \hspace{-0.4cm} $\begin{array}{c}
0110011 \\
1010101 \\
0001111 \\
1010101 \\
0110011 \\
0111100
\end{array}$ \hspace{-0.4cm} & \hspace{-0.4cm} $\begin{array}{c}
0110011 \\
1010101 \\
0001111 \\
1010101 \\
0111100 \\
0110011
\end{array}$ \hspace{-0.4cm} & \hspace{-0.4cm} $\begin{array}{c}
0110011 \\
1010101 \\
0001111 \\
1010101 \\
0111100 \\
0111100
\end{array}$ \hspace{-0.4cm} & \hspace{-0.4cm} $\begin{array}{c}
0110011 \\
1010101 \\
0001111 \\
1010101 \\
0000000 \\
1010101
\end{array}$ \hspace{-0.4cm} &  \hspace{-0.4cm} $\begin{array}{c}
0110011 \\
1010101 \\
0001111 \\
1010101 \\
0000000 \\
1011010
\end{array}$ \hspace{-0.4cm} &  \hspace{-0.4cm} $\begin{array}{c}
0110011 \\
1010101 \\
0001111 \\
1010101 \\
0001111 \\
1010101
\end{array}$ \hspace{-0.4cm} &  \hspace{-0.4cm} $\begin{array}{c}
0110011 \\
1010101 \\
0001111 \\
1010101 \\
0001111 \\
1011010
\end{array}$ \hspace{-0.4cm} &  \hspace{-0.4cm} $\begin{array}{c}
0110011 \\
1010101 \\
0001111 \\
1011010 \\
1100110 \\
1100110
\end{array}$ \hspace{-0.4cm} &  \hspace{-0.4cm} $\begin{array}{c}
0110011 \\
1010101 \\
0001111 \\
1011010 \\
1100110 \\
1101001
\end{array}$ \hspace{-0.4cm} &  \hspace{-0.4cm} $\begin{array}{c}
0110011 \\
1010101 \\
0001111 \\
1011010 \\
1101001 \\
1100110
\end{array}$ \hspace{-0.4cm} &  \hspace{-0.4cm} $\begin{array}{c}
0110011 \\
1010101 \\
0001111 \\
1011010 \\
1101001 \\
1101001
\end{array}$ \hspace{-0.4cm}
\\ \hline
\hspace{-0.4cm} $\begin{array}{c}
0110011 \\
1010101 \\
0001111 \\
1011010 \\
1010101 \\
0000000
\end{array}$ \hspace{-0.4cm} & \hspace{-0.4cm} $\begin{array}{c}
0110011 \\
1010101 \\
0001111 \\
1011010 \\
1010101 \\
0001111
\end{array}$ \hspace{-0.4cm} & \hspace{-0.4cm} $\begin{array}{c}
0110011 \\
1010101 \\
0001111 \\
1011010 \\
1011010 \\
0000000
\end{array}$ \hspace{-0.4cm} & \hspace{-0.4cm} $\begin{array}{c}
0110011 \\
1010101 \\
0001111 \\
1011010 \\
1011010 \\
0001111
\end{array}$ \hspace{-0.4cm} & \hspace{-0.4cm} $\begin{array}{c}
0110011 \\
1010101 \\
0001111 \\
1011010 \\
0110011 \\
1010101
\end{array}$ \hspace{-0.4cm} & \hspace{-0.4cm} $\begin{array}{c}
0110011 \\
1010101 \\
0001111 \\
1011010 \\
0110011 \\
1011010
\end{array}$ \hspace{-0.4cm} & \hspace{-0.4cm} $\begin{array}{c}
0110011 \\
1010101 \\
0001111 \\
1011010 \\
0111100 \\
1010101
\end{array}$ \hspace{-0.4cm} & \hspace{-0.4cm} $\begin{array}{c}
0110011 \\
1010101 \\
0001111 \\
1011010 \\
0111100 \\
1011010
\end{array}$ \hspace{-0.4cm} & \hspace{-0.4cm} $\begin{array}{c}
0110011 \\
1010101 \\
0001111 \\
1011010 \\
0000000 \\
0110011
\end{array}$ \hspace{-0.4cm} & \hspace{-0.4cm} $\begin{array}{c}
0110011 \\
1010101 \\
0001111 \\
1011010 \\
0000000 \\
0111100
\end{array}$ \hspace{-0.4cm} & \hspace{-0.4cm} $\begin{array}{c}
0110011 \\
1010101 \\
0001111 \\
1011010 \\
0001111 \\
0110011
\end{array}$ \hspace{-0.4cm} & \hspace{-0.4cm} $\begin{array}{c}
0110011 \\
1010101 \\
0001111 \\
1011010 \\
0001111 \\
0111100
\end{array}$ \hspace{-0.4cm}
\\ \hline
\end{tabular}
\end{tiny}
\end{table}

\begin{table}[ht]
\centering \caption{Extensions of Type D with $Y_1=\cY_{13}$, $Y_2=\cY_{14}$, and $Y_3=\cY_{15}$}
\label{tab:S_D6}
\begin{tiny}
\begin{tabular}{|c|c|c|c|c|c|c|c|c|c|c|c|}
\hline
\hspace{-0.4cm} $\begin{array}{c}
0110011 \\
0000000 \\
1010101 \\
1101001 \\
0000000 \\
1100110
\end{array}$ \hspace{-0.4cm} & \hspace{-0.4cm} $\begin{array}{c}
0110011 \\
0000000 \\
1010101 \\
1101001 \\
0000000 \\
1101001
\end{array}$ \hspace{-0.4cm} & \hspace{-0.4cm} $\begin{array}{c}
0110011 \\
0000000 \\
1010101 \\
1101001 \\
0001111 \\
1100110
\end{array}$ \hspace{-0.4cm} & \hspace{-0.4cm} $\begin{array}{c}
0110011 \\
0000000 \\
1010101 \\
1101001 \\
0001111 \\
1101001
\end{array}$ \hspace{-0.4cm} & \hspace{-0.4cm} $\begin{array}{c}
0110011 \\
0000000 \\
1010101 \\
1101001 \\
0110011 \\
0000000
\end{array}$ \hspace{-0.4cm} & \hspace{-0.4cm} $\begin{array}{c}
0110011 \\
0000000 \\
1010101 \\
1101001 \\
0110011 \\
0001111
\end{array}$ \hspace{-0.4cm} & \hspace{-0.4cm} $\begin{array}{c}
0110011 \\
0000000 \\
1010101 \\
1101001 \\
0111100 \\
0000000
\end{array}$ \hspace{-0.4cm} & \hspace{-0.4cm} $\begin{array}{c}
0110011 \\
0000000 \\
1010101 \\
1101001 \\
0111100 \\
0001111
\end{array}$ \hspace{-0.4cm} & \hspace{-0.4cm} $\begin{array}{c}
0110011 \\
0000000 \\
1010101 \\
1101001 \\
1010101 \\
1010101
\end{array}$ \hspace{-0.4cm} & \hspace{-0.4cm} $\begin{array}{c}
0110011 \\
0000000 \\
1010101 \\
1101001 \\
1010101 \\
1011010
\end{array}$ \hspace{-0.4cm} & \hspace{-0.4cm} $\begin{array}{c}
0110011 \\
0000000 \\
1010101 \\
1101001 \\
1011010 \\
1010101
\end{array}$ \hspace{-0.4cm} & \hspace{-0.4cm} $\begin{array}{c}
0110011 \\
0000000 \\
1010101 \\
1101001 \\
1011010 \\
1011010
\end{array}$ \hspace{-0.4cm}
\\ \hline
\hspace{-0.4cm} $\begin{array}{c}
0110011 \\
0000000 \\
1010101 \\
1101001 \\
1100110 \\
0110011
\end{array}$ \hspace{-0.4cm} & \hspace{-0.4cm} $\begin{array}{c}
0110011 \\
0000000 \\
1010101 \\
1101001 \\
1100110 \\
0111100
\end{array}$ \hspace{-0.4cm} & \hspace{-0.4cm} $\begin{array}{c}
0110011 \\
0000000 \\
1010101 \\
1101001 \\
1101001 \\
0110011
\end{array}$ \hspace{-0.4cm} & \hspace{-0.4cm} $\begin{array}{c}
0110011 \\
0000000 \\
1010101 \\
1101001 \\
1101001 \\
0111100
\end{array}$ \hspace{-0.4cm} & \hspace{-0.4cm} $\begin{array}{c}
0110011 \\
0001111 \\
1010101 \\
1100110 \\
1100110 \\
0000000
\end{array}$ \hspace{-0.4cm} & \hspace{-0.4cm} $\begin{array}{c}
0110011 \\
0001111 \\
1010101 \\
1100110 \\
1100110 \\
0001111
\end{array}$ \hspace{-0.4cm} & \hspace{-0.4cm} $\begin{array}{c}
0110011 \\
0001111 \\
1010101 \\
1100110 \\
1101001 \\
0000000
\end{array}$ \hspace{-0.4cm} & \hspace{-0.4cm} $\begin{array}{c}
0110011 \\
0001111 \\
1010101 \\
1100110 \\
1101001 \\
0001111
\end{array}$ \hspace{-0.4cm} & \hspace{-0.4cm} $\begin{array}{c}
0110011 \\
0001111 \\
1010101 \\
1100110 \\
1010101 \\
1100110
\end{array}$ \hspace{-0.4cm} & \hspace{-0.4cm} $\begin{array}{c}
0110011 \\
0001111 \\
1010101 \\
1100110 \\
1010101 \\
1101001
\end{array}$ \hspace{-0.4cm} & \hspace{-0.4cm} $\begin{array}{c}
0110011 \\
0001111 \\
1010101 \\
1100110 \\
1011010 \\
1100110
\end{array}$ \hspace{-0.4cm} & \hspace{-0.4cm} $\begin{array}{c}
0110011 \\
0001111 \\
1010101 \\
1100110 \\
1011010 \\
1101001
\end{array}$ \hspace{-0.4cm}
\\ \hline
\hspace{-0.4cm} $\begin{array}{c}
0110011 \\
0001111 \\
1010101 \\
1100110 \\
0110011 \\
0110011
\end{array}$ \hspace{-0.4cm} & \hspace{-0.4cm} $\begin{array}{c}
0110011 \\
0001111 \\
1010101 \\
1100110 \\
0110011 \\
0111100
\end{array}$ \hspace{-0.4cm} & \hspace{-0.4cm} $\begin{array}{c}
0110011 \\
0001111 \\
1010101 \\
1100110 \\
0111100 \\
0110011
\end{array}$ \hspace{-0.4cm} & \hspace{-0.4cm} $\begin{array}{c}
0110011 \\
0001111 \\
1010101 \\
1100110 \\
0111100 \\
0111100
\end{array}$ \hspace{-0.4cm} & \hspace{-0.4cm} $\begin{array}{c}
0110011 \\
0001111 \\
1010101 \\
1100110 \\
0000000 \\
1010101
\end{array}$ \hspace{-0.4cm} & \hspace{-0.4cm} $\begin{array}{c}
0110011 \\
0001111 \\
1010101 \\
1100110 \\
0000000 \\
1011010
\end{array}$ \hspace{-0.4cm} & \hspace{-0.4cm} $\begin{array}{c}
0110011 \\
0001111 \\
1010101 \\
1100110 \\
0001111 \\
1010101
\end{array}$ \hspace{-0.4cm} & \hspace{-0.4cm} $\begin{array}{c}
0110011 \\
0001111 \\
1010101 \\
1100110 \\
0001111 \\
1011010
\end{array}$ \hspace{-0.4cm} & \hspace{-0.4cm} $\begin{array}{c}
0110011 \\
0001111 \\
1010101 \\
1101001 \\
1100110 \\
1100110
\end{array}$ \hspace{-0.4cm} & \hspace{-0.4cm} $\begin{array}{c}
0110011 \\
0001111 \\
1010101 \\
1101001 \\
1100110 \\
1101001
\end{array}$ \hspace{-0.4cm} & \hspace{-0.4cm} $\begin{array}{c}
0110011 \\
0001111 \\
1010101 \\
1101001 \\
1101001 \\
1100110
\end{array}$ \hspace{-0.4cm} & \hspace{-0.4cm} $\begin{array}{c}
0110011 \\
0001111 \\
1010101 \\
1101001 \\
1101001 \\
1101001
\end{array}$ \hspace{-0.4cm}
\\ \hline
\hspace{-0.4cm} $\begin{array}{c}
0110011 \\
0001111 \\
1010101 \\
1101001 \\
1010101 \\
0000000
\end{array}$ \hspace{-0.4cm} & \hspace{-0.4cm} $\begin{array}{c}
0110011 \\
0001111 \\
1010101 \\
1101001 \\
1010101 \\
0001111
\end{array}$ \hspace{-0.4cm} & \hspace{-0.4cm} $\begin{array}{c}
0110011 \\
0001111 \\
1010101 \\
1101001 \\
1011010 \\
0000000
\end{array}$ \hspace{-0.4cm} & \hspace{-0.4cm} $\begin{array}{c}
0110011 \\
0001111 \\
1010101 \\
1101001 \\
1011010 \\
0001111
\end{array}$ \hspace{-0.4cm} & \hspace{-0.4cm} $\begin{array}{c}
0110011 \\
0001111 \\
1010101 \\
1101001 \\
0110011 \\
1010101
\end{array}$ \hspace{-0.4cm} & \hspace{-0.4cm} $\begin{array}{c}
0110011 \\
0001111 \\
1010101 \\
1101001 \\
0110011 \\
1011010
\end{array}$ \hspace{-0.4cm} & \hspace{-0.4cm} $\begin{array}{c}
0110011 \\
0001111 \\
1010101 \\
1101001 \\
0111100 \\
1010101
\end{array}$ \hspace{-0.4cm} & \hspace{-0.4cm} $\begin{array}{c}
0110011 \\
0001111 \\
1010101 \\
1101001 \\
0111100 \\
1011010
\end{array}$ \hspace{-0.4cm} & \hspace{-0.4cm} $\begin{array}{c}
0110011 \\
0001111 \\
1010101 \\
1101001 \\
0000000 \\
0110011
\end{array}$ \hspace{-0.4cm} & \hspace{-0.4cm} $\begin{array}{c}
0110011 \\
0001111 \\
1010101 \\
1101001 \\
0000000 \\
0111100
\end{array}$ \hspace{-0.4cm} & \hspace{-0.4cm} $\begin{array}{c}
0110011 \\
0001111 \\
1010101 \\
1101001 \\
0001111 \\
0110011
\end{array}$ \hspace{-0.4cm} & \hspace{-0.4cm} $\begin{array}{c}
0110011 \\
0001111 \\
1010101 \\
1101001 \\
0001111 \\
0111100
\end{array}$ \hspace{-0.4cm}
\\ \hline
\end{tabular}
\end{tiny}
\end{table}
\clearpage

\end{appendix}


\end{document}